\newtheorem{Theorem}{Theorem}[section]
\newtheorem{Definition}[Theorem]{Definition}
\newtheorem{Proposition}[Theorem]{Proposition}
\newtheorem{Lemma}[Theorem]{Lemma}
\newtheorem{Remark}[Theorem]{Remark}
\newtheorem{Hypothesis}{Hypothesis}
\numberwithin{equation}{section}
\theoremstyle{plain}
\newtheorem{thm}{\protect\theoremname}[section]
\theoremstyle{plain}
\newtheorem{prop}[thm]{\protect\propositionname}
\theoremstyle{plain}
\newtheorem{lem}[thm]{\protect\lemmaname}
\theoremstyle{remark}
\newtheorem{rem}[thm]{\protect\remarkname}
\theoremstyle{plain}
\theoremstyle{remark}
\newtheorem*{rem*}{\protect\remarkname}
\def\r2{\mathbb{R}^2}
\def\le{\left}
\def\r{\right}
\def\a{\alpha}
\def\e{\epsilon}
\def\A{{\mathcal A}}
\def\L{{\mathcal L}}
\def\F{{\mathcal F}}
\def\H{{\mathcal H}}
\def\M{{\mathcal M}}
\def\s0t{\sup_{t \in [0,T]}}
\def\ds{\displaystyle}
\def\beq{\begin{equation}}
\def\eeq{\end{equation}}
\def\barr{\begin{array}}
\def\earr{\end{array}}
\def\vs{\vspace{.01mm}   \\}
\def\rd{\reals\,^{d}}
\newcommand{\E}{\mathbb E}
\newcommand{\norm}[1]{\left\| #1\right\|}
\newcommand{\Inner}[1]{\big\langle #1\big\rangle}
\newcommand{\abs}[1]{\lvert #1\rvert}
\def\H{{\mathcal{H}}}
\def\E{{\mathbb{E}}}
\def\P{{\mathbb{P}}}
\date{}
  \providecommand{\corollaryname}{Corollary}
  \providecommand{\lemmaname}{Lemma}
\providecommand{\theoremname}{Theorem}
\theoremstyle{plain}
\providecommand{\corollaryname}{Corollary}
\providecommand{\lemmaname}{Lemma}
\providecommand{\propositionname}{Proposition}
\providecommand{\remarkname}{Remark}
\providecommand{\theoremname}{Theorem}
\begin{document}

\global\long\def\divg{{\rm div}\,}%

\global\long\def\curl{{\rm curl}\,}%

\global\long\def\rt{\mathbb{R}^{3}}%

\global\long\def\rd{\mathbb{R}^{d}}%

\global\long\def\rtwo{\mathbb{R}^{2}}%

\global\long\def\e{\epsilon}%

\title{The small-mass limit for some constrained   wave equations with nonlinear conservative  noise}

\author{Sandra Cerrai\thanks{Department of Mathematics, University of Maryland, cerrai@umd.edu. Partially supported by the NSF grant  DMS-1954299 - {\em Multiscale analysis of infinite-dimensional stochastic systems}.}, \ Mengzi Xie\thanks{Department of Mathematics, University of Maryland, mxie2019@umd.edu.}}

\maketitle

\selectlanguage{english}

\date{}

\maketitle

\begin{abstract} We study the small-mass limit, also known as the Smoluchowski-Kramers diffusion approximation (see \cite{kra} and \cite{smolu}), for a system of  stochastic damped wave equations, whose solution is constrained to live in the unitary sphere of the space of square integrable functions on the  interval $(0,L)$. The stochastic perturbation is given by a nonlinear multiplicative Gaussian noise, where the stochastic differential is understood in Stratonovich sense. Due to its particular structure, such noise  not only conserves $\mathbb{P}$-a.s. the constraint, but also preserves a suitable energy functional. In the limit we derive a deterministic system, that remains confined to the unit sphere of $L^2$, but includes additional terms. These terms depend on the reproducing kernel of the noise and account for the interaction between the constraint and the particular conservative noise we choose. 
\end{abstract}

\section{Introduction}

In recent years, there has been a considerable research activity related to the Smoluchowski-Kramers diffusion approximation for infinite-dimensional systems. The first results in this direction dealt with the case of  constant damping term, with smooth noise and regular coefficients (see \cite{CF1}, \cite{CF2}, \cite{salins}, and \cite{Lv2}). More recently, the case of constant friction has been studied in \cite{Fui} and \cite{zine} for equations perturbed by  space-time white noise in dimension $d=2$, and in \cite{Han} for equations with H\"older continuous coefficients in dimension $d=1$. In all these papers, the fact that the damping coefficient is constant leads to a perturbative result, in the sense that, in the small-mass limit, the solution $u_\mu$ of the stochastic damped wave equation converges to the solution of the stochastic parabolic problem formally obtained by taking $\mu=0$.  
The case of SPDEs with 
state-dependent damping  was considered first in \cite{CX} for a single equation and later, in \cite{CD}, in the case of  systems of equations (see also \cite{CXIE}). Notably, this scenario differs drastically from the previous one, as the non-constant friction leads to an additional  noise-induced term in the small-mass limit. 

An analogous phenomenon has been identified  
 in \cite{BC23}, where  the case of SPDEs constrained to live on a manifold in the functional space of square-integrable functions  $L^2$ was considered. The study of deterministic and stochastic constrained PDEs is not a new field of study. In this context, we would like to mention   the paper \cite{Rybka_2006} by Rybka and the paper \cite{Caff+Lin_2009} by Caffarelli and Lin,  where, in order  to find a gradient flow approach to a specific minimization problem, deterministic heat flows in Hilbert manifolds were explored.
A constrained version the deterministic 2-D Navier-Stokes equation was studied in \cite{Brz+DM_2018} by Brzezniak, Dhariwal and Mariani, as well as in \cite{CPR_2009} by Caglioti, Pulvirenti, and Rousset, and later its stochastic version was investigated in \cite{Brz+DHM_2018} and \cite{Brz+Dhariwal_2021}    by Brzezniak and Dhariwal.  
 
 In \cite{BC23} we have introduced for the first time  a class of damped stochastic wave equations constrained to evolve within the unitary sphere of $L^2$ and we have shown that   the Smoluchowski-Kramers approximation  leads to a stochastic parabolic problem, whose solution is still confined to the unitary $L^2$-sphere and where, as in \cite{CX} and  \cite{CD}, an additional drift term appears. Somewhat surprisingly, such extra drift does not account for the Stratonovich-to-It\^o correction term. 
\medskip

 In the present paper we continue the work started in \cite{BC23} and we introduce the following system of stochastic wave equations on the interval $(0,L)$
\begin{equation}\label{SPDE}
	\le\{\begin{array}{l}
		\ds{\mu\partial_{t}^{2}u_{\mu}(t,x)+\mu\abs{\partial_{t}u_{\mu}(t)}_{L^2(0,L)}^{2}u_{\mu}(t,x), }\\
		\vs 
		\ds{\quad\quad =\partial^2_x u_{\mu}(t,x)+\abs{\partial_x u_{\mu}(t)}_{L^2(0,L)}^{2}u_{\mu}(t,x)-\gamma\partial_{t}u_{\mu}(t,x)+\sqrt{\mu}\,\big(u_{\mu}(t)\times\partial_{t}u_{\mu}(t)\big)\circ\partial_{t}w(t,x)  }\\
		\vs 
		\ds{u_{\mu}(0,x)=u_{0}(x),\ \ \ \partial_{t}u_{\mu}(0,x)=v_{0}(x),\ \ \  u_{\mu}(t,0)=u_{\mu}(t,L)=0, }
	\end{array}\r.
\end{equation}
depending on a parameter $0<\mu<<1$. Here $u_\mu(t,x) \in\,\mathbb{R}^3$, for every $(t,x) \in\,[0,+\infty)\times (0,L)$, the friction coefficient $\gamma$ is strictly positive, and $w(t)$ is a cylindrical Wiener process, white in time and colored in space, defined on some stochastic basis $(\Omega,\mathcal{F},\{\mathcal{F}_t\},\mathbb{P})$, with $\circ$ denoting the Stratonovich stochastic differential. The solution $u_\mu(t)$ is subject to the finite-codimension constraint of living on $M=S_{L^2(0,L)}(0,1)$, the unitary sphere of $L^2(0,L)$, with the initial data $(u_0,v_0)$ in $\mathcal{M}$, the tangent bundle of $M$. 

The key and fundamental distinction between the present paper and \cite{BC23} lies in the nature of the random perturbation  considered. Actually,  unlike any previous work related to the Smoluchowski-Kramers diffusion-approximation, both in finite and infinite dimensions, here we consider a diffusion coefficient $\sigma_\mu$
  which is nonlinear and includes both  the position $u_\mu(t)$ and the velocity $\partial_t u_\mu(t)$, through the vector product  $\sqrt{\mu}\,u_\mu(t)\times \partial_t u_\mu(t)$.
  The reason why in all previous works the diffusion does not depend on the velocity is that, while one expects a limit for $u_\mu$, there is no limit for $\partial_t u_\mu$, and it is not clear how to make sense of the limit in the equation, especially when it comes to the martingale term. However, as shown in previous work, the term $\sqrt{\mu}\,\partial_t u_\mu$ can have non trivial limiting behavior, and with this new work we are trying to understand what happens when $\sigma_\mu(u)=\sqrt{\mu}\, u\times v$.

Since in \cite{BC23} the diffusion coefficient did not include the velocity $\partial_t u_\mu(t)$, the It\^o and Stratonovich interpretations of the stochastic differential yielded the same equation. In the current setting, however, the It\^o and Stratonovich differentials lead to different equations, and our choice to interpret the stochastic differential in the Stratonovich sense has significant implications. Due to  the special structure of the diffusion coefficient, both the Stratonovich and It\^o integrals ensure that the solution  $(u_\mu(t), \partial_tu_\mu(t))$ remains within  the tangent bundle $\mathcal{M}$, for every $t\geq 0$. However, the noise in the Stratonovich sense exhibits a more substantial conservative behavior by preserving also the energy
\begin{equation}\label{intro3}
\mathcal{E}_\mu(t):=\vert u_\mu(t)\vert_{H^1_0(0,L)}^2+\mu\,\vert \partial_t u_\mu(t)\vert_{L^2(0,L)}^2+\int_0^t \vert \partial_t u_\mu(s)\vert_{L^2(0,L)}^2\,ds,	
\end{equation}
almost surely with respect to $\mathbb{P}$. 
This phenomenon, well-understood in other contexts, particularly in the parabolic setting, plays a critical role in the scenario considered here, as it serves as a key tool in proving the necessary bounds for $u_\mu(t)$ and $\sqrt{\mu}\,\partial_t u_\mu(t)$ in the appropriate functional spaces, uniformly with respect to $\mu \in (0,1)$. And those bounds are fundamental in the proof of the tightness and  in the identification of the limit.

After showing that for every fixed $\mu \in\,(0,1)$ and $p\geq 1$, and any initial condition $(u_0,v_0) \in\,[H^1_0(0,L)\times L^2(0,L)]\cap \mathcal{M}$ and $p\geq 1$ there exists a unique mild solution 
\[z_\mu=(u_\mu,\partial_tu_\mu) \in\,L^p(\Omega;C([0,+\infty);[H^1_0(0,L)\times L^2(0,L)]\cap \mathcal{M})),\]
 we study the limiting behavior of $u_\mu$, as $\mu\downarrow 0$. Our main result consists in proving that if $(u_0,v_0) \in\,[H^2(0,L)\times H^1(0,L)]\cap \mathcal{M}$, then, for every $T>0$ and $\delta<2$ and for every $\eta>0$ we have 
	\begin{equation}\label{intro4}
		\lim_{\mu\to0} \P\Big(\abs{u_{\mu}-u}_{C([0,T];H^{\delta})}>\eta\Big)=0.
	\end{equation}
Here $u$ is the unique solution of the  deterministic problem
	\begin{equation}\label{intro1}
	\le\{\begin{array}{l}
		\ds{\partial_{t}\Big[\Big(\gamma+\frac{1}{2}\varphi\abs{u(t)}^{2}\Big)u(t)\Big]=\partial^2_x u(t) +  \abs{\partial_x u(t)}_{H}^{2}u(t) +\frac{3\varphi}{2\gamma}\left(\left[\partial^2_x u(t)+\abs{\partial_x u(t)}_{H}^2u(t)\right] \cdot u(t)\right)u(t),}\\
		\vs
		\ds{u(0,x)=u_{0}(x),\ \ \ \ u(t,0)=u(t,L)=0, }
	\end{array}\r.
\end{equation}
where 
\[\varphi(x)=\sum_{i=1}^\infty \xi_i^2(x),\ \ \ \ \ \ \ x \in\,[0,L],\]
and $\{\xi_i\}_{i \in\,\mathbb{N}}$ is an orthonormal basis for the reproducing kernel $K$ of the noise $w(t)$. 
In particular, this means that $u_\mu$ converges to a deterministic limit $u$, which solves a deterministic problem,  where the constraint to stay on the unitary sphere of $L^2(0,L)$ is preserved. Remarkably, as in the previously mentioned cases - where however only stochastic limits are obtained - in the small-mass limit several noise-induced terms appear in the limiting equation, and such terms depend on the noise  present in the  second-order problem through the function $\varphi$.

It is important to remark that this non trivial behavior emerges only in the $\sqrt{\mu}$ scaling for the diffusion coefficient, as in the case of $\mu^\a$, with $\a>1/2$, the limiting equation  \eqref{intro1} has to be replaced by the constrained parabolic problem
\[\gamma\partial_{t}u(t)=\partial^2_x u(t) +  \abs{\partial_x u(t)}_{H}^{2}u(t),\]
with the same initial and boundary conditions, where there are no noise-induced terms. As for the limiting behavior of $u_\mu$ in the scaling $\mu^\alpha$, with $\alpha \in\,[0,1/2)$, at this stage it is not clear what we should expect. We believe that if any limiting point exists, it should satisfy the deterministic equation
\begin{equation}
\label{intro2}	
\abs{u(t)}^{2}u(t)=\abs{u_0}^{2}u_{0} +\frac{3}{\gamma}\int_{0}^{t}\big(\partial^2_x u(s)\cdot u(s)\big)u(s)ds +\frac{3}{\gamma}\int_{0}^{t}\abs{\partial_x u(s)}_{H}^{2}\abs{u(s)}^{2}u(s)ds.
\end{equation}
However, in order to prove the convergence of any limiting point for the family $\{u_\mu\}_{\mu \in\,(0,1)}$ to a solution of \eqref{intro2}, tightness in at least  $L^2(0,T;H^1_0(0,L))$ would be necessary, and, because of the nature of the diffusion coefficient $\sigma(u,v)=u\times v$, the uniform bounds required for its proof  seem to be out of reach. 

Under the $\sqrt{\mu}$ scaling assumption, in addition to the energy identity \eqref{intro3}, we can prove suitable bounds for $u_\mu$ and $\partial_t u_\mu$ in spaces of higher regularity than $C([0,T];H^1_0(0,L))\cap L^2(0,T;H^2(0,L))$ for $u_\mu$ and $L^2(0,T;L^2(0,L))$ for $\partial_t u_\mu$, which are uniform with respect to $\mu \in\,(0,1)$. Those bounds allow to show that $\{\mathcal{L}(u_\mu)\}_{\mu \in\,(0,1)}$ is tight in $C([0,T];H^\delta(0,L))$, for every $\delta<2$, and  any weak limit point for $u_\mu$ is a solution of \eqref{intro1}. Equation \eqref{intro1} is highly non trivial and the existence of solutions is obtained only as a consequence of the small-mass limit. However, any limiting point for $\{u_\mu\}_{\mu \in\,(0,1)}$ turns out to belong to the space of functions in $C([0,T];H^1_0(0,L))\cap L^2(0,T;H^2(0,L))$, which admit a weak derivative in time in $L^2(0,T;H)$. What is important is that despite its complex form, we can prove the uniqueness of the solution to equation \eqref{intro1} in these functional spaces. Consequently, we can identify any limit point for $\{u_\mu\}_{\mu \in\,(0,1)}$ with the unique solution of  equation \eqref{intro1} and limit \eqref{intro4} follows.

\medskip

Before concluding, let us outline the structure of this paper. In Section \ref{sec2}, we introduce all the assumptions and notations that will be used throughout the paper. Section \ref{sec3} presents the main results. In Section \ref{sec4}, we study the well-posedness of equation \eqref{SPDE}, and in Section \ref{sec5}, we establish bounds for the solution  $u_\mu$ and $\sqrt{\mu}\,\partial_t u_\mu$ which hold uniformly with respect to $\mu \in\,(0,1)$.  Section \ref{sec6} focuses on the limiting equation \eqref{intro1}; we introduce an equivalent formulation and prove the uniqueness of the solution in a suitable functional space. Finally, in Section \ref{sec7}, we demonstrate the validity of limit \eqref{intro4}. This is achieved by first integrating \eqref{intro1} with respect to time and rearranging all terms in a proper way, and then by proving tightness and identifying any weak limit as the unique solution of \eqref{intro1}.

\section{Notations and assumptions}
\label{sec2}

Let $H$ denote the Hilbert space $L^{2}(0,L;\mathbb{R}^3)$, for some fixed  $L>0$, endowed with the  inner product 
\[\Inner{u,v}_{H}=\sum_{i=1}^3\langle u_i,v_i\rangle_{L^2(0,L)}=\int_0^L(u(x)\cdot v(x))\,dx,\] 
and the corresponding norm $\abs{\cdot}_{H}$. Notice that here and in what follows, we shall denote the scalar product of two vectors $h, k \in\,\mathbb{R}^3$ by $(h\cdot k)$. Moreover we shall denote the norm of a vector $h$ in $\mathbb{R}^3$ by $\vert h\vert_{\mathbb{R}^3}$, or just by $\vert h\vert$, when there is no risk of confusion.

 Next, for every $k \in\,\mathbb{N}$ we shall denote by $H^k$ the closure of $C^\infty_0([0,L])$ in $W^{k,2}(0,L)$, where $W^{k,2}(0,L)$ is the space of all functions $u \in\,H$ such that $D^h u$ exists in the weak sense, for every $h\leq k$, and $D^h u \in\,H$. Due to the Poincar\'e inequality, we can endow $H^k$ with the norm
\[\vert u\vert_{H^k}:=\vert D^k u\vert_{H}.\]
Moreover, we shall set $\H_{k}:=H^{k+1}\times H^{k}$. When $k=0$, we will simply denote $\H_0$ by $\H$. Finally, for every function $u:(0,L)\to\mathbb{R}^3$ we shall denote
\[\vert u\vert_\infty=\sup_{x \in\,[0,L]}|u(x)|_{\mathbb{R}^3}.\]
Notice that since $[0,L]\subset \mathbb{R}$, we have $H^1\hookrightarrow L^\infty(0,L)$.

If $E$ and $F$ are Banach spaces,  the class of all bounded linear operators from $E$ to $F$  will be denoted by $\mathcal{L}(E,F)$. We will use a shortcut notation
$\mathcal{L}(E)$ for $\mathcal{L}(E,E)$. It is known that $\mathcal{L}(E,F)$ is also a Banach space. By $\mathcal{L}_2(E,E;F)$ we will
denote the Banach space of all bounded bilinear operators from $E\times E=:E^2$ to $F$. If $K$ is another Hilbert space,
	by $\mathcal{T}_2(K,F)$,  we will denote  the Hilbert space of all Hilbert-Schmidt operators from $K$ to $F$, endowed with the natural inner product and norm.
It is known that $\mathcal{T}_2(K,F) \hookrightarrow \mathcal{L}(K,F)$ continuously.
If $\{e_j\}_{j \in \mathbb{N}}$ is   an orthonormal basis of a separable Hilbert space $K$ which  is continuously embedded into a Banach space $E$ and
\[
\sum_{j=1}^{\infty} \vert e_j\vert_E^2 <\infty,
\]
then for every $\Lambda \in \mathcal{L}_2(E\times E;F)$ we put
\begin{equation}\label{eqn-tr K}
\text{tr}_K(\Lambda)= \sum_{i=1}^{\infty} \Lambda(e_j,e_j).
\end{equation}

In what follows, we denote by $M$ the unit sphere in $H$
\begin{equation*}
	M=\big\{u\in H:\abs{u}_{H}=1\big\},
\end{equation*}
and by $\M$ the corresponding tangent bundle
\begin{equation*}
	\M=\big\{(u,v)\in M\times H:\Inner{u,v}_{H}=0 \big\}.
\end{equation*}

\medskip

Now, we rewrite equation \eqref{SPDE}  as the following system
\begin{equation}\label{SPDE1}
	\le\{\begin{array}{l}
		\ds{du_{\mu}(t)=v_{\mu}(t)dt, }\\
		\vs 
		\ds{dv_{\mu}(t)=\frac{1}{\mu}\left[\partial^2_x u_{\mu}(t)+\abs{\partial_x u_{\mu}(t)}_{H}^{2}u_{\mu}(t)-\mu\abs{v_{\mu}(t)}_{H}^{2}u_{\mu}(t)-\gamma v_{\mu}(t)\right]dt}
		\\
		\vs
		\ds{ \quad \quad \quad \quad \quad \quad \quad +\frac{1}{\sqrt{\mu}}\big(u_{\mu}(t)\times v_{\mu}(t)\big)\circ dw(t),  }\\
		\vs
		\ds{u_{\mu}(0)=u_{0},\ \ \ v_{\mu}(0)=v_{0},\ \ \  \ \ \ \ \ \ \ \ u_{\mu}(t,0)=u_{\mu}(t,L)=0 }.
	\end{array}\r.
\end{equation}
Here $w(t)$ is a cylindrical Wiener process in $H$. Thus, if we denote by 
 $K$  its reproducing kernel Hilbert space,  we have
 \[w(t,x)=\sum_{i=1}^\infty \xi_i(x)\beta_i(t),\ \ \ \ \ \ \ \ \ \ \ \ (t,x) \in\,[0,+\infty)\times (0,L),\]
 where $\{\beta_i(t)\}_{i \in\,\mathbb{N}}$ is a sequence of independent standard Brownian motions, all defined on the stochastic basis $(\Omega, \mathcal{F}, \{\mathcal{F}_t\}_{t\geq 0}, \mathbb{P})$, and  $\{\xi_{i}\}_{i\in\mathbb{N}}$ is an orthonormal basis of $K$. In what follows we shall denote by$E$  a Banach space containing $K$, such that the embedding of $K$ in $E$ is Hilbert-Schmidt. In particular
 \[\sum_{i=1}^\infty \vert \xi_i\vert_E^2<\infty.\] 
 
Moreover,  we shall assume the following conditions are  satisfied.
 \begin{Hypothesis}
 \label{H1}	
 All functions  $\xi_{i}$ belong to $C^{1}([0,L])$. Moreover, if we denote 
\begin{equation*}
	\varphi(x):=\sum_{i=1}^{\infty}\abs{\xi_{i}(x)}^{2},\ \ \ \ \ \varphi_{1}(x):=\sum_{i=1}^{\infty}\abs{\xi_{i}'(x)}^{2},\ \ \ \ \ \ x\in (0,L),
\end{equation*}
we have that $\varphi$ and $\varphi_1$ belong to $L^\infty(0,L)$.
 \end{Hypothesis}

\begin{Remark}
{\em Since for every $x\in(0,L)$, we have
\begin{equation*}
	\big\lvert\sum_{i=1}^{\infty}\xi_{i}(x)\xi_{i}'(x)\big\rvert\leq \sqrt{\varphi(x)\varphi_{1}(x)},
	\end{equation*}
thanks to Hypothesis \ref{H1} we have that $\varphi$ is weakly differentiable and
\[\varphi^\prime(x)=2\sum_{i=1}^\infty \xi_i(x)\xi_i^\prime(x),\ \ \ \ \ \ x \in\,(0,L).\]
In particular, this allows to   conclude that $\varphi \in\,W^{1,\infty}(0,L)$ with
\[\abs{\varphi'}_{\infty}\leq c\,(\abs{\varphi}_\infty+\abs{\varphi_1}_\infty).\]
\hfill$\Box$
}	
\end{Remark}

\section{Main results} \label{sec3}

In what follows, we denote by $A$ the realization in $H$ of the second derivative operator, endowed with Dirichlet boundary conditions, and for all $\mu>0$ define 
\begin{equation*}
	\A_{\mu}z:=\big(v,\mu^{-1}A u\big),\ \ \ \ z=(u,v)\in D(\A_{\mu})=\mathcal{H}_1.
\end{equation*}
Moreover, for every  $\mu>0$  we define\begin{equation*}
	F_\mu(z):= -\mu\,\abs{v}_{H}^{2}u+\abs{\partial_x u}_{H}^{2}u,\ \ \ \ \ \ z=(u,v)\in\H.
\end{equation*}
With these notations, system \eqref{SPDE1} can be rewritten as
\begin{equation}\label{SPDE2}
	dz_\mu(t) = \mathcal{A}_\mu z_\mu(t)dt + \frac 1\mu\,\left(0, F_\mu(z_\mu(t))-\gamma\, v_\mu(t)\right)dt +\frac 1{\sqrt{\mu}}\big(0, u_\mu(t)\times v_\mu(t)\big)\circ dw(t). 
\end{equation}
with $z_{\mu}(0)=z_0=(u_0,v_0)$.

The first result we will prove in this paper is the following well-posedness result for system \eqref{SPDE1} in  $\mathcal{H}$.

\begin{thm}\label{well_posedness_system}
	For every $\mu>0$ and $z_0=(u_0,v_0)\in \H\cap \M$, there exists a unique mild  solution to the stochastic constrained wave equation \eqref{SPDE1}. Namely, there exists a unique $\H\cap \M$-valued continuous and adapted process $z_{\mu}(t)=(u_{\mu}(t),v_{\mu}(t))$, $t\geq 0$,  such that the following hold.
	\begin{enumerate}
		\item The process $u_{\mu}(t)$ has $M$-valued trajectories of class $C^1$ and 
		\begin{equation*}
			v_{\mu}(t)=\partial_{t}u_{\mu}(t),\ \ \ \ t\geq0.
		\end{equation*}
		
		\item The process $z_{\mu}(t)$ satisfies the equation
		\begin{equation*}
			\begin{array}{l}
				\ds{z_{\mu}(t)=\mathcal{S}_{\mu}(t)z_0+\frac{1}{\mu}\int_{0}^{t}\mathcal{S}_{\mu}(t-s)\big(0,F_\mu(z_\mu(s))-\gamma v_{\mu}(s)\big)ds }\\
				\vs 
				\ds{\quad\quad \quad \quad\quad \quad\quad \quad \quad+\frac{1}{\sqrt{\mu}}\int_{0}^{t}\mathcal{S}_{\mu}(t-s)\big(0,(u_{\mu}(s)\times v_{\mu}(s))\circ\,dw(s)\big),  }
			\end{array}
		\end{equation*}
		for every $t\geq 0$, $\P$-almost surely.
		\item The following identity holds for every $t\geq 0$
		\begin{equation}
		\label{sm2}
		\vert u_\mu(t)\vert_{H^1}^2+\mu\,\vert v_\mu(t)\vert_H^2+2\,\gamma \int_0^t \vert v_\mu(s)\vert_H^2\,ds=	\vert u_0\vert_{H^1}^2+\mu\,\vert v_0\vert_H^2,\ \ \ \ \ \mathbb{P}-\text{a.s.}
		\end{equation}

	\end{enumerate}
	
\end{thm}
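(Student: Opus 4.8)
The plan is to recast the Stratonovich equation \eqref{SPDE2} as a semilinear It\^o equation on $\H$ with locally Lipschitz coefficients, solve it locally by a contraction argument, and then propagate the solution globally by means of two structural identities: the invariance of $\M$ and the energy balance \eqref{sm2}. I would begin by computing the Stratonovich-to-It\^o correction. Writing the $i$-th diffusion field as $g_i(z)=\mu^{-1/2}(0,(u\times v)\xi_i)$, the correction $\tfrac12\sum_i Dg_i(z)[g_i(z)]$ is evaluated through the pointwise identity $u\times(u\times v)=(u\cdot v)u-\abs{u}^2v$ and the definition of $\varphi$, yielding
\[\frac12\sum_{i=1}^\infty Dg_i(z)[g_i(z)]=\frac{1}{2\mu}\Big(0,\big[(u\cdot v)u-\abs{u}^2v\big]\varphi\Big).\]
Since $\A_\mu$ generates a $C_0$-group $\mathcal{S}_\mu(t)$ on $\H$ and all drift nonlinearities are polynomial in $(u,v)$ with $L^\infty$ coefficients (thanks to $\varphi\in L^\infty$ from Hypothesis \ref{H1}), the It\^o drift is locally Lipschitz and bounded on bounded subsets of $\H$; the mild solution of the resulting It\^o equation is then equivalent to the Stratonovich mild formulation in part (2).

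For local well-posedness I would first verify that the diffusion is genuinely Hilbert--Schmidt: from $\sum_i\abs{(u\times v)\xi_i}_H^2=\int_0^L\abs{u\times v}^2\varphi\,dx\le\abs{\varphi}_\infty\abs{u}_\infty^2\abs{v}_H^2$ and $H^1\hookrightarrow L^\infty$, the map $z\mapsto(0,u\times v)$ is bounded into $\mathcal{T}_2(K,H)$ on bounded sets, so the stochastic convolution is well defined. Truncating the coefficients outside $\{\abs{z}_\H\le R\}$ renders them globally Lipschitz, and the standard fixed-point argument in $L^p(\Omega;C([0,T];\H))$ gives a unique mild solution of the truncated problem; patching these along the stopping times $\tau_R=\inf\{t:\abs{z_\mu(t)}_\H\ge R\}$ produces a unique local solution up to $\tau^\ast=\lim_R\tau_R$, uniqueness following from the local Lipschitz bounds via Gronwall.

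The core of the argument is the invariance of $\M$ together with the energy identity. Set $\phi=\abs{u_\mu}_H^2-1$ and $\psi=\Inner{u_\mu,v_\mu}_H$. Because $du_\mu=v_\mu\,dt$ carries no noise, $d\phi=2\psi\,dt$; applying the Stratonovich chain rule to $\psi$ and using that the martingale term $\Inner{u_\mu,u_\mu\times v_\mu}_H$ vanishes pointwise (a scalar triple product with a repeated factor) together with $\Inner{u_\mu,Au_\mu}_H=-\abs{\partial_x u_\mu}_H^2$, all terms collapse to the closed \emph{linear} system $d\phi=2\psi\,dt$, $d\psi=\mu^{-1}(\abs{\partial_x u_\mu}_H^2-\mu\abs{v_\mu}_H^2)\phi\,dt-\gamma\mu^{-1}\psi\,dt$, whose coefficients are continuous in $t$. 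Since $(\phi(0),\psi(0))=(0,0)$, uniqueness for linear ODEs forces $(\phi,\psi)\equiv(0,0)$, i.e. $z_\mu(t)\in\M$. The same cancellations give \eqref{sm2}: computing $d\abs{u_\mu}_{H^1}^2=-2\Inner{Au_\mu,v_\mu}_H\,dt$ and $\mu\,d\abs{v_\mu}_H^2=2\Inner{v_\mu,Au_\mu}_H\,dt-2\gamma\abs{v_\mu}_H^2\,dt$ (where on $\M$ the $\Inner{v_\mu,u_\mu}_H$-terms and the noise term $\Inner{v_\mu,u_\mu\times v_\mu}_H$ drop out) and adding them yields $d(\abs{u_\mu}_{H^1}^2+\mu\abs{v_\mu}_H^2)=-2\gamma\abs{v_\mu}_H^2\,dt$, which integrates to \eqref{sm2}.

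Identity \eqref{sm2} then gives $\abs{u_\mu(t)}_{H^1}^2+\mu\abs{v_\mu(t)}_H^2\le\abs{u_0}_{H^1}^2+\mu\abs{v_0}_H^2$, so for fixed $\mu$ the $\H$-norm of $z_\mu$ stays bounded by a constant depending only on $(\mu,z_0)$; hence $\tau_R=+\infty$ for $R$ large and $\tau^\ast=+\infty$, i.e. the solution is global. Property (1) follows since $u_\mu(t)=u_0+\int_0^t v_\mu(s)\,ds$ with $v_\mu\in C([0,\infty);H)$ gives $u_\mu\in C^1$, $\partial_t u_\mu=v_\mu$, and $u_\mu(t)\in M$ by the invariance of $\M$. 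The main obstacle is that a mild solution is too rough to justify directly the It\^o/Stratonovich computations above, which involve the unbounded operator $A$ (through $\abs{u_\mu}_{H^1}^2$ and $\Inner{u_\mu,Au_\mu}_H$). I would resolve this by carrying out the computations first on finite-dimensional Galerkin approximations spanned by the eigenfunctions of $A$ --- where classical stochastic calculus applies and the triple-product cancellations are exact --- proving there both the linear system for $(\phi,\psi)$ and \eqref{sm2}, and then passing to the limit using the uniform-in-$n$ bounds furnished by the energy identity.
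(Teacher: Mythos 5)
Your proposal is correct and follows essentially the same route as the paper: local well-posedness by truncation and fixed point after converting to It\^o form with correction $\tfrac{1}{2\mu}\varphi\,u\times(u\times v)$, invariance of $\M$ via the closed linear system for $(\abs{u_\mu}_H^2-1,\Inner{u_\mu,v_\mu}_H)$ with zero initial data, the energy identity \eqref{sm2}, and globality from the resulting a priori bound (the paper justifies the It\^o computations for mild solutions by citing a lemma from \cite{BC23} rather than by Galerkin approximation, but that is a cosmetic difference). The only place you are terser than the paper is in asserting $\mu\,d\abs{v_\mu}_H^2=2\Inner{v_\mu,Au_\mu}_H\,dt-2\gamma\abs{v_\mu}_H^2\,dt$: besides the vanishing of the martingale and of the $\Inner{u_\mu,v_\mu}_H$-terms, one must also note that the It\^o quadratic-variation term $\tfrac1\mu\norm{u_\mu\times v_\mu}_{\mathcal{T}_2(K,H)}^2$ cancels exactly against $\tfrac1\mu\Inner{v_\mu,\varphi\,u_\mu\times(u_\mu\times v_\mu)}_H$ by Lagrange's identity $\abs{h\times k}^2=\abs{h}^2\abs{k}^2-(h\cdot k)^2$ --- a different cancellation from the triple-product one used for the $\M$-invariance, though your stated end formula is correct.
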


The second result, which represents the main goal of this paper, concerns  the limiting behavior of the process $u_\mu(t)$, as $\mu\downarrow 0$.

\begin{thm} \label{limite}
	Fix $(u_0,v_0)\in \H_{1}\cap\M$. Then, for every $T>0$ and $\delta<2$ and for every $\eta>0$ we have 
	\begin{equation*}
		\lim_{\mu\to0} \P\Big(\abs{u_{\mu}-u}_{C([0,T];H^{\delta})}>\eta\Big)=0,
	\end{equation*}
	where $u$ is the unique solution of the following deterministic problem
	\begin{equation*}
	\le\{\begin{array}{l}
		\ds{\partial_{t}\Big[\Big(\gamma+\frac{1}{2}\varphi\abs{u(t)}^{2}\Big)u(t)\Big]=\partial^2_x u(t) +  \abs{\partial_x u(t)}_{H}^{2}u(t) +\frac{3\varphi}{2\gamma}\left(\left[\partial^2_x u(t)+\abs{\partial_x u(t)}_{H}^2u(t)\right] \cdot u(t)\right)u(t),}\\
		\vs
		\ds{u(0,x)=u_{0}(x),\ \ \ \ u(t,0)=u(t,L)=0. }
	\end{array}\r.
\end{equation*}
\end{thm}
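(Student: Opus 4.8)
The plan is to pass to the limit in a time‑integrated version of the $u_\mu$‑equation, using the uniform bounds of Section \ref{sec5} to extract a convergent subsequence and the uniqueness result of Section \ref{sec6} to conclude. Writing $b(u):=\partial^2_x u+\abs{\partial_x u}_H^2 u$, the starting point is to isolate the damping in the second equation of \eqref{SPDE1}: since $\gamma v_\mu\,dt=\gamma\,du_\mu$, one obtains
\[
\gamma\,du_\mu=b(u_\mu)\,dt-\mu\abs{v_\mu}_H^2 u_\mu\,dt-\mu\,dv_\mu+\sqrt{\mu}\,(u_\mu\times v_\mu)\circ dw,
\]
and after integration on $[0,t]$ the terms $\mu\int_0^t\abs{v_\mu}_H^2 u_\mu\,ds$ and $\mu(v_\mu(t)-v_0)$ are of order $\mu$ and $\sqrt\mu$ respectively in $H$, by the energy identity \eqref{sm2}, which controls $\int_0^t\abs{v_\mu}_H^2\,ds$ and $\sqrt\mu\,\abs{v_\mu(t)}_H$ uniformly in $\mu$. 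Hence everything hinges on the limit of the Stratonovich integral $\sqrt\mu\int_0^t(u_\mu\times v_\mu)\circ dw$.

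First I would rewrite this integral in It\^o form. Its martingale part $\sqrt\mu\int_0^t(u_\mu\times v_\mu)\,dw$ has quadratic variation of order $\mu\int_0^t\abs{v_\mu}_H^2\,ds=O(\mu)$, so it vanishes in $L^2(\Omega;C([0,T];H))$; the factor $\mu^{-1/2}$ in the diffusion of $z_\mu$ cancels the $\sqrt\mu$ prefactor, leaving an $O(1)$ correction. Using the triple–product identity $u\times(u\times v)=(u\cdot v)u-\abs{u}^2 v$, this correction reduces to $\tfrac12\int_0^t\varphi\big[(u_\mu\cdot v_\mu)u_\mu-\abs{u_\mu}^2 v_\mu\big]\,ds$. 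The essential maneuver is then to remove the velocity: integrating by parts in time in $\int_0^t\varphi\,\abs{u_\mu}^2 v_\mu\,ds$, via $\partial_s[\varphi\abs{u_\mu}^2 u_\mu]=2\varphi(u_\mu\cdot v_\mu)u_\mu+\varphi\abs{u_\mu}^2 v_\mu$, produces the boundary contribution $-\tfrac12[\varphi\abs{u_\mu(t)}^2u_\mu(t)-\varphi\abs{u_0}^2u_0]$ together with an extra $\tfrac32\int_0^t\varphi(u_\mu\cdot v_\mu)u_\mu\,ds$. Into this last integral I would substitute the scalar relation obtained by taking the $\mathbb R^3$‑inner product of the second equation with $u_\mu$, namely $\gamma(u_\mu\cdot v_\mu)=(b(u_\mu)\cdot u_\mu)$ up to terms of lower order in $\mu$: the martingale so generated is identically zero because $u_\mu\cdot(u_\mu\times v_\mu)=0$, while the residual $\mu\,dv_\mu$ contribution is treated by a further integration by parts and shown, again through \eqref{sm2}, to be $o(1)$. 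Collecting all terms yields the integrated identity
\[
\Big(\gamma+\tfrac12\varphi\abs{u_\mu(t)}^2\Big)u_\mu(t)=\Big(\gamma+\tfrac12\varphi\abs{u_0}^2\Big)u_0+\int_0^t\Big[b(u_\mu)+\tfrac{3\varphi}{2\gamma}(b(u_\mu)\cdot u_\mu)u_\mu\Big]\,ds+\rho_\mu(t),
\]
with a remainder $\rho_\mu$ tending to $0$ in probability in $C([0,T];H)$.

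Next I would prove tightness of $\{\mathcal L(u_\mu)\}$ in $C([0,T];H^\delta)$ for $\delta<2$. The bounds of Section \ref{sec5} give $u_\mu$ uniformly bounded in $C([0,T];H^1)\cap L^2(0,T;H^2)$, so $b(u_\mu)$ is bounded in $L^2(0,T;H)$; the identity above then shows that $t\mapsto(\gamma+\tfrac12\varphi\abs{u_\mu(t)}^2)u_\mu(t)$ is uniformly H\"older‑$\tfrac12$ in time with values in $H$, and since $w\mapsto(\gamma+\tfrac12\varphi\abs{w}^2)w$ is invertible with controlled inverse (here $\gamma>0$ is crucial), $u_\mu$ inherits the same equicontinuity. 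Combined with the compact embedding $H^2\hookrightarrow\hookrightarrow H^\delta$, an Aubin--Lions--Simon compactness argument yields tightness. By Prokhorov and the Skorokhod representation theorem, along a subsequence one has a.s. convergence $u_{\mu_n}\to u$ in $C([0,T];H^\delta)$ with $\delta>1/2$, hence also in $C([0,T];C([0,L];\mathbb R^3))$ and in $L^2(0,T;H^1)$.

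Finally I would identify the limit and conclude. Strong convergence in $C([0,L];\mathbb R^3)$ handles the nonlinearities $\abs{\partial_x u_\mu}_H^2 u_\mu$ and $\varphi\abs{u_\mu}^2 u_\mu$, while the second‑order term $(b(u_\mu)\cdot u_\mu)u_\mu$ is passed to the limit by a weak--strong argument, pairing the weak limit of $\partial^2_x u_{\mu_n}$ in $L^2(0,T;H)$ against the strongly convergent $u_{\mu_n}\otimes u_{\mu_n}$. Together with $\rho_{\mu_n}\to0$, this shows that $u$ solves the integrated form of the stated problem, and, since $u\in C([0,T];H^1)\cap L^2(0,T;H^2)$ with $\partial_t u\in L^2(0,T;H)$, the differential problem itself. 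The uniqueness result of Section \ref{sec6} forces all subsequential limits to coincide with this deterministic $u$, so weak convergence of $\{u_\mu\}$ to a deterministic limit upgrades to convergence in probability, which is exactly \eqref{intro4}. The main obstacle throughout is that $v_\mu=\partial_t u_\mu$ admits no limit, so the noise‑induced drift must be extracted entirely through the self‑consistent substitution of the equation into the Stratonovich correction, with every velocity‑dependent remainder shown to vanish uniformly in $\mu$ by means of the energy identity \eqref{sm2}.
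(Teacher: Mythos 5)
Your proposal is correct and follows essentially the same route as the paper: the same self-consistent substitution (time integration by parts of the Stratonovich correction combined with the scalar relation $\gamma(u_\mu\cdot v_\mu)=\big(\partial_x^2u_\mu+\abs{\partial_x u_\mu}_H^2u_\mu\big)\cdot u_\mu+o(1)$) yielding the integrated identity \eqref{sm1200} with a vanishing remainder, followed by tightness, Skorokhod representation, identification of the limit, and the uniqueness result of Section \ref{sec6}. The only points to tighten are that tightness in $C([0,T];H^{\delta})$ for $1\le\delta<2$ requires the sup-in-time bound $\mathbb{E}\sup_{t}\abs{u_\mu(t)}_{H^2}^2\le c$ of \eqref{uniform_est4-bis} rather than only an $L^2(0,T;H^2)$ bound, and that remainders such as $\mu\int_0^t(u_\mu\cdot v_\mu)v_\mu\,ds$ and $\mu\int_0^t\abs{v_\mu}^2u_\mu\,ds$ need the higher-order estimate \eqref{uniform_est2}, not just the energy identity \eqref{sm2}.
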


\section{The well-posedness of system \eqref{SPDE1}}
\label{sec4}

As known (see e.g. \cite[Definition 3.1]{Brz+Elw_2000}), for  an arbitrary function $G: F\to \mathcal{L}(E,F)$ of class $C^1$
\begin{equation}\label{sm1300}
\begin{split}
\int_0^t G(z(s)) \,\circ\,dW(s)&:=\int_0^t G(z(s)) \,dW(s)
+\frac12  \int_0^t \text{tr}_{K} \bigl[G^\prime(z(s))G(z(s))\bigr]  \,ds,
\end{split}
\end{equation}
with $\text{tr}_K$  defined  as in \eqref{eqn-tr K}.
Note that 
\[G^\prime(z)\,G(z)\in \mathcal{L}(E; \mathcal{L}(E,F))\equiv \mathcal{L}_2(E\times E,F),\ \ \ \ \ \ z \in\,F,\] and
\[
G^\prime(z)G(z)(e_1,e_2)=[G^\prime(z)\bigl( G(z)e_1)]e_2, \ \ \ \ \ \ (e_1,e_2) \in E\times E,\ \ \ \ \ z \in\,F.
\]
 This means that $\text{tr}_{K} \bigl[ G^\prime(z)G(z)\bigr]$ is a well defined element of $F$ and satisfies
\[
\text{tr}_K [ G^\prime(z)G(z) ]= \sum_{i=1}^{\infty}[ G^\prime(z)\bigl( G(z)e_j\bigr)]e_j,
\]
where $\{e_j\}_{j \in\,\mathbb{N}}$ is   an orthonormal basis  of  $K$. In particular,   if we take
\[G(u,v)k:=(0,\sigma(u,v)k),\ \ \ \ \ (u,v) \in\,\mathcal{H},\ \ \ \ k \in\,E,\]
for some $\sigma:\mathcal{H}\to \mathcal{L}(E,H^1),$ we have
\[[G^\prime(u,v)G(u,v)](k,h)=\left(0,\partial_v\sigma(u,v)[\sigma(u,v)(k)](h)\right),\]
so that
\begin{equation}\label{eqn-tr K-G'G}\text{tr}_K [ G^\prime(z)G(z) ]= \left(0,\text{tr}_K[\partial_v\sigma(u,v)\sigma(u,v)]\right).
\end{equation}

In what follows, we will take
\[\sigma(u,v)k=(u\times v)k.\]
The following result holds.
\begin{lem}\label{lem-sm1}
	The map $\sigma:\H_{k}\to \mathcal{T}_{2}(K,H^{k})$ is Lipschitz-continuous on balls and has polynomial growth, for $k=0,1$. Moreover, its  Fr\'echet derivative along any direction $v \in\,H^k$ is given by
	\begin{equation*}
		\partial_{v}\sigma(z)y=u\times y,\ \ \ \ z=(u,v)\in \H_{k},\ \ \ \ y\in H^{k},
	\end{equation*}
	and the function 
	\begin{equation}\label{sm1310}
		\H_{k}\ni z=(u,v)\mapsto \text{{\em tr}}_{K}\left[\partial_{v}\sigma(z)\sigma(z)\right]=\text{{\em tr}}_{K}\left[u\times (u\times v)\right]=\varphi \left[u\times (u\times v)\right]\in H^{k},
	\end{equation}
	is Lipschitz-continuous on balls with  polynomial growth.
\end{lem}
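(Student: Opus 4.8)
The plan is to exploit the fact that $\sigma(u,v)=u\times v$ is \emph{bilinear} --- separately linear in $u$ and in $v$ --- so that all three assertions collapse onto a single family of Hilbert--Schmidt estimates, obtained by expanding along the orthonormal basis $\{\xi_i\}$ of $K$ and summing the pointwise products against the weights $\varphi$ and $\varphi_1$. First I would establish membership in $\mathcal{T}_2(K,H^k)$ together with the quadratic bound. Writing the Hilbert--Schmidt norm as
\[
\norm{\sigma(u,v)}_{\mathcal{T}_2(K,H^k)}^2=\sum_{i=1}^\infty\abs{(u\times v)\,\xi_i}_{H^k}^2,
\]
for $k=0$ the summand equals $\int_0^L\abs{(u\times v)(x)}^2\abs{\xi_i(x)}^2\,dx$, and interchanging sum and integral yields $\int_0^L\abs{(u\times v)(x)}^2\varphi(x)\,dx\le\abs{\varphi}_\infty\,\abs{u\times v}_H^2$. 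Since $\abs{u\times v}_H\le\abs{u}_\infty\abs{v}_H\le c\,\abs{u}_{H^1}\abs{v}_H$ by the embedding $H^1\hookrightarrow L^\infty$, the claim for $k=0$ follows.

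For $k=1$ the Leibniz rule gives $D[(u\times v)\xi_i]=(u'\times v+u\times v')\xi_i+(u\times v)\xi_i'$; squaring and summing, the two groups of terms produce the weights $\varphi$ and $\varphi_1$ respectively, both finite in $L^\infty$ by Hypothesis \ref{H1}, while $u'\times v$, $u\times v'$ and $u\times v$ are controlled in $H$ using $u\in H^2$, $v\in H^1$ and $H^1\hookrightarrow L^\infty$. (One also checks that $(u\times v)\xi_i\in H^1$, the Dirichlet condition being inherited from the vanishing of $u$ and $v$ at the endpoints.) With the quadratic bound in hand, Lipschitz continuity on balls is immediate from bilinearity via $\sigma(u_1,v_1)-\sigma(u_2,v_2)=u_1\times(v_1-v_2)+(u_1-u_2)\times v_2$, and polynomial growth is automatic.

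The derivative formula is then essentially free: since $v\mapsto\sigma(u,v)$ is linear, $\sigma(u,v+y)-\sigma(u,v)=u\times y$ with no remainder, so $\partial_v\sigma(z)y=u\times y$, and the bound just established (with $u\times y$ in place of $u\times v$) shows this is a genuine Fr\'echet derivative into $\mathcal{T}_2(K,H^k)$. For the trace I would insert this into the general identity \eqref{eqn-tr K-G'G}, compute $\sigma(z)\xi_i=(u\times v)\xi_i$ and then
\[
[\partial_v\sigma(z)(\sigma(z)\xi_i)]\xi_i=\big(u\times((u\times v)\xi_i)\big)\xi_i=\abs{\xi_i}^2\,u\times(u\times v),
\]
and sum over $i$ so that $\sum_i\abs{\xi_i}^2=\varphi$ emerges as the scalar factor, giving $\text{tr}_K[\partial_v\sigma(z)\sigma(z)]=\varphi\,[u\times(u\times v)]$.

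It remains to show that $z=(u,v)\mapsto\varphi\,[u\times(u\times v)]$ is Lipschitz on balls with polynomial growth as a map $\H_k\to H^k$. For $k=0$ this is the trilinear estimate $\abs{\varphi\,u\times(u\times v)}_H\le\abs{\varphi}_\infty\,\abs{u}_\infty^2\,\abs{v}_H$, a polynomial (hence Lipschitz-on-balls) expression after $H^1\hookrightarrow L^\infty$. The case $k=1$ is the only genuinely delicate point, and I expect it to be the main obstacle: here $D\big(\varphi\,u\times(u\times v)\big)=\varphi'\,[u\times(u\times v)]+\varphi\,D[u\times(u\times v)]$ brings in $\varphi'$, and the estimate closes precisely because the Remark following Hypothesis \ref{H1} guarantees $\varphi\in W^{1,\infty}(0,L)$ with $\abs{\varphi'}_\infty<\infty$, while $D[u\times(u\times v)]$ is handled by Leibniz using $u\in H^2$, $v\in H^1$ and the embedding. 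Thus the difficulty is entirely bookkeeping at regularity level $k=1$ --- tracking the derivatives $\xi_i'$ and $\varphi'$ and verifying that every product of the boundary-vanishing functions $u,v$ and their derivatives stays in $H^1$ --- rather than anything conceptual; bilinearity and polynomiality then upgrade each bound to Lipschitz continuity on balls.
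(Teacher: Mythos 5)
Your proposal is correct and follows essentially the same route as the paper's proof: expand the Hilbert--Schmidt norm along the basis $\{\xi_i\}$ and interchange sum and integral to produce the weights $\varphi$ and $\varphi_1$, use bilinearity of $(u,v)\mapsto u\times v$ for the Lipschitz-on-balls estimates, observe that linearity in $v$ makes the Fr\'echet derivative exact, identify the trace as $\varphi\,[u\times(u\times v)]$ by summing $\abs{\xi_i}^2$, and close the $k=1$ case for the trace map with $\varphi\in W^{1,\infty}$ from the Remark following Hypothesis \ref{H1}. The only cosmetic difference is that the paper additionally invokes the identity $h\times(h\times k)=-\abs{h}^2k+(h\cdot k)h$ to rewrite the trace before estimating, which your direct trilinear bound renders unnecessary.
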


\begin{proof}
	{\em Case  $k=0$.} For every $z_{1}=(u_1,v_1)$ and  $z_{2}=(u_2,v_2)$ in $\H$ we have 
	\begin{equation*}
		\begin{array}{l}
			\ds{ \Vert\sigma(z_{1})-\sigma(z_{2})\Vert_{\mathcal{T}_{2}(K,H)}^{2}=\sum_{i=1}^{\infty}\abs{(\sigma(z_1)-\sigma(z_2))\xi_{i}}_{H}^{2}}\\[14pt]
			\ds{\quad \quad =\int_0^L \vert u_1(x)\times v_1(x)-u_2(x)\times v_2(x)|^2\sum_{i=1}^{\infty} |\xi_i(x)|^2\,dx			\leq c\,\vert\varphi\vert_{\infty}\abs{u_{1}\times v_{1}-u_{2}\times v_{2}}_{H}^{2}}\\[16pt]
			\ds{\quad \quad    \leq c\,\vert\varphi\vert_{\infty}\Big( \abs{u_{1}-u_{2}}_{H^1}^{2}\abs{v_{1}}_{H}^{2}+\abs{u_{2}}_{H^1}^{2}\abs{v_{1}-v_{2}}_{H}^{2}\Big)\leq c\,\vert\varphi\vert_{\infty}\big(\abs{z_{1}}_{\H}^{2}+\abs{z_{2}}_{\H}^{2}\big)\abs{z_{1}-z_{2}}_{\H}^{2} }.
		\end{array}
	\end{equation*}
Since $\sigma(0)=0$, this implies that $\sigma:\mathcal{H}\to\mathcal{T}_2(K,H)$ is well defined and locally Lipschitz continuous, with quadratic growth.

For every $u \in\,H^1$ fixed, the mapping $
\sigma(u,\cdot):H\to\mathcal{L}(E,H)$ is linear and its derivative $\partial_v \sigma(u,\cdot)$ is given by
\[\partial_v \sigma(z)\,y=u\times y \in\,\mathcal{L}(E,H),\ \ \ \ \ \ \ z=(u,v) \in\,\mathcal{H},\ \ \ \ y \in\,H.\]
In particular
$\partial_v \sigma(z)\,\sigma(z) \in\,\mathcal{L}(E\times E,H)$
and\[		\text{tr}_{K}\left[\partial_{v}\sigma(z)\sigma(z)\right]=\text{tr}_{K}\left[u\times (u\times v)\right]=\sum_{i=1}^{\infty}\left(u\times(u\times v)\xi_i\right)\xi_i=\varphi\left(u\times(u\times v)\right).
	\]
Now, for every $h, k \in\,\mathbb{R}^3$ we have
\begin{equation}\label{sm20}h\times (h\times k)=-\vert h\vert^2 k+(h\cdot k)h,\end{equation}
so that we can write	
\begin{equation*}\text{tr}_{K}\left[\partial_{v}\sigma(z)\sigma(z)\right]=\varphi(-\abs{u}^{2}v+(u\cdot v)u),\end{equation*}
and
\begin{equation*}
		\begin{array}{ll}
			&\ds{ \abs{\text{tr}_{K}\left[u\times (u\times v)\right]}_{H}^{2} = \int_0^L\varphi(x)^{2}\left|-\abs{u(x)}^{2}v(x)+(u(x)\cdot v(x))u(x)\right|^{2}dx }\\
			\vs
			&\ds{\quad  =\int_0^L\varphi(x)^{2}\big(\abs{u(x)}^{4}\abs{v(x)}^{2}-\abs{u(x)}^{2}(u(x)\cdot v(x))^{2}\big)dx   \leq c\,\vert\varphi\vert_{\infty}^{2}\abs{u}_{H^{1}}^{4}\abs{v}_{H}^{2}\leq c\vert\varphi\vert_{\infty}^{2}\abs{z}_{\H}^{6}}.
		\end{array}
	\end{equation*}
Moreover, 	\begin{equation*}
		\begin{array}{ll}
			&\ds{ \big\lvert \text{tr}_{K}(u_{1}\times (u_{1}\times v_{1}))-\text{tr}_{K}(u_{2}\times (u_{2}\times v_{2})) \big\rvert_{H}^{2}  }\\
			\vs
			&\ds{ \quad \leq c\int_0^L\varphi(x)^{2}\left(\big(\abs{u_{1}(x)}^{2}+\abs{u_{2}(x)}^{2}\big)\big(\abs{v_{1}(x)}^{2}+\abs{v_{2}(x)}^{2}\big)\abs{u_{1}(x)-u_{2}(x)}^{2}\right.}\\
			\vs
			&\ds{\quad \quad  \quad  \quad  \quad \quad \quad  \quad      \left.+\big(\abs{u_{1}(x)}^{4}+\abs{u_{2}(x)}^{4}\big)\abs{v_{1}(x)-v_{2}(x)}^{2}\right)dx  }\\
			\vs
			&\ds{\leq c\vert\varphi\vert_{\infty}^{2}\Big(\big(\abs{u_{1}}_{H^{1}}^{2}+\abs{u_{2}}_{H^{1}}^{2}\big)\big(\abs{v_{1}}_{H}^{2}+\abs{v_{2}}_{H}^{2}\big)\abs{u_{1}-u_{2}}_{H^{1}}^{2} + \big(\abs{u_{1}}_{H^{1}}^{4}+\abs{u_{2}}_{H^{1}}^{4}\big)\abs{v_{1}-v_{2}}_{H}^{2}   \Big) }\\
			\vs
			&\ds{\quad \quad \quad\quad\leq c\vert\varphi\vert_{\infty}^{2}\big(\abs{z_{1}}_{\H}^{4}+\abs{z_{2}}_{\H}^{4}\big)\abs{z_{1}-z_{2}}_{\H}^{2} }.
		\end{array}
	\end{equation*}
	This implies  that the mapping $z \in\,\mathcal{H}\mapsto \text{tr}_{K}\left[\partial_{v}\sigma(z)\sigma(z)\right] \in\,H$ is well-defined and locally Lischitz-continuous, with cubic growth.

{\em Case $k=1$.} For any  $z_{1}=(u_{1},v_{1})$ and  $z_{2}=(u_{2},v_{2})$ in $\H_{1}$, we have
	\[
		\begin{array}{l}
			\ds{ \norm{\sigma(z_{1})-\sigma(z_{2})}_{\mathcal{T}_{2}(K,H^{1})}^{2} = \sum_{i=1}^{\infty}\vert (\sigma(z_1)-\sigma(z_2))\xi_i\vert_{H^1}^2}\\
			\vs
			\ds{\quad \quad \quad \leq  2\sum_{i=1}^{\infty}
			\big\lvert \xi_{i}\,(u_{1}\times v_{1}-u_{2}\times v_{2})^\prime \big\rvert_{H}^{2}+2\sum_{i=1}^{\infty}\big\lvert\xi_{i}'\,(u_{1}\times v_{1}-u_{2}\times v_{2}) \big\rvert_{H}^{2}  }\\
			\vs
			\ds{\quad \leq c\vert\varphi\vert_{\infty}\Big(\abs{(u_{1}-u_{2})^\prime\times v_{1}}_{H}^{2} + \abs{(u_{1}-u_{2})\times v_{1}^\prime}_{H}^{2}+ \abs{u_{2}^\prime\times (v_{1}-v_{2}) }_{H}^{2} + \abs{u_{2}\times (v_{1}-v_{2})^\prime  }_{H}^{2}\Big)  }\\
			\vs
			\ds{ \quad\quad \quad \quad +c \vert\varphi_{1}\vert_{\infty}\Big(\abs{(u_{1}-u_{2})\times v_{1}}_{H}^{2}+\abs{u_{2}\times (v_{1}-v_{2})}_{H}^{2}\Big)  }\\
			\vs
			\ds{\quad \leq c\big(\vert\varphi\vert_{\infty}+\vert\varphi_{1}\vert_{\infty}\big)\big(\abs{u_{1}-u_{2}}_{H^{1}}^{2}\abs{v_{1}}_{H^{1}}^{2}+\abs{u_{2}}_{H^{1}}^{2}\abs{v_{1}-v_{2}}_{H^{1}}^{2}\big) }\\
			\vs
			\ds{\quad \quad \quad\quad \quad  \quad\quad\quad \leq c\big(\vert\varphi\vert_{\infty}+\vert\varphi_{1}\vert_{\infty}\big)\big(\abs{z_{1}}_{\H_{1}}^{2}+\abs{z_{2}}_{\H_{1}}^{2}\big)\abs{z_{1}-z_{2}}_{\H_{1}}^{2} }.
		\end{array}
	\] This implies the  local Lipschitz-continuity and polynomial growth of the mapping $\sigma:\mathcal{H}_1\to \mathcal{T}_2(K,H^1)$. 
	Finally, for every $z_{1}=(u_{1},v_{1})$ and  $z_{2}=(u_{2},v_{2})$ in $\H_{1}$
	\begin{equation*}
		\begin{array}{ll}
			&\ds{ \big\lvert \text{tr}_{K}(u_{1}\times (u_{1}\times v_{1}))-\text{tr}_{K}(u_{2}\times (u_{2}\times v_{2}))\big\rvert_{H^{1}}^{2}  }\\
			\vs
			&\ds{\quad \leq c\,\vert\varphi\vert_{\infty}^{2}\Big(\abs{(u_{1}-u_{2})^\prime\times (u_{1}\times v_{1})}_{H}^{2}+\abs{(u_{1}-u_{2})\times (u_{1}\times v_{1})^\prime}_{H}^{2}\Big) }\\
			\vs
			&\ds{\quad\quad +c\,\vert\varphi\vert_{\infty}^{2}\Big(\abs{u_{2}^\prime\times (u_{1}\times v_{1}-u_{2}\times v_{2})}_{H}^{2} + \abs{u_{2}\times (u_{1}\times v_{1}-u_{2}\times v_{2})^\prime}_{H}^{2}\Big) }\\
			\vs
			&\ds{\quad\quad +c\,\vert\varphi'\vert_{\infty}^{2}\vert u_{1}\times (u_{1}\times v_{1})-u_{2}\times (u_{2}\times v_{2})\vert_{H}^{2}  }\\
			\vs
			&\ds{\quad \quad\quad \quad\quad \quad \leq c\,\big(\vert \varphi\vert_\infty^{2}+\vert \varphi^\prime\vert_\infty^{2}\big)\big(\abs{z_{1}}_{\H_{1}}^{2}+\abs{z_{2}}_{\H_{1}}^{2}\big)^{2}\abs{z_{1}-z_{2}}_{\H_{1}}^{2} },
		\end{array}
	\end{equation*}
	and this implies that the mapping 
	\[\mathcal{H}_1 \ni \mapsto \text{tr}_K\left[u\times(u\times v)\right] \in\,H^1,\]
	is locally Lipschitz continuous and has cubic growth.

\end{proof}

 As a consequence of \eqref{sm1300}, \eqref{eqn-tr K-G'G} and \eqref{sm1310}, we can rewrite system  \eqref{SPDE1} as
 \begin{equation}\label{SPDE1-bis}
	\le\{\begin{array}{l}
		\ds{du_{\mu}(t)=v_{\mu}(t)dt, }\\
		\vs 
		\ds{dv_{\mu}(t)=\frac{1}{\mu}\bigl[\partial^2_x u_{\mu}(t)+\abs{\partial_x u_{\mu}(t)}_{H}^{2}u_{\mu}(t)-\mu\abs{v_{\mu}(t)}_{H}^{2}u_{\mu}(t)-\gamma v_{\mu}(t)}
		\\
		\vs
		\ds{ \quad \quad \quad \quad \quad +\frac 1{2\mu}\,\text{tr}_K(u_\mu(t)\times(u_\mu(t)\times v_\mu(t)))\bigr]\,dt +\frac{1}{\sqrt{\mu}}\big(u_{\mu}(t)\times v_{\mu}(t)\big) dw(t),  }\\
		\vs
		\ds{u_{\mu}(0)=u_{0},\ \ \ v_{\mu}(0)=v_{0},\ \ \  \ \ \ \ \ \ \ \ u_{\mu}(t,0)=u_{\mu}(t,L)=0 }.
	\end{array}\r.
\end{equation}
  
In particular, equation 
\eqref{SPDE2} can be rewritten as
\begin{equation*}
	\begin{array}{l}
		\ds{dz_\mu(t) = \mathcal{A}_\mu z_\mu(t)dt+\frac 1\mu\left(0, F_\mu(z_\mu(t))-\gamma v_\mu(t)\right)dt}\\
		\vs
		\ds{\quad \quad \quad \quad \quad \quad  \quad  +\frac{1}{2\mu}\left(0,\text{tr}_K\left(u_\mu\times(u_\mu(t)\times v_\mu(t))\right)\right)dt+\frac 1{\sqrt{\mu}}\left(0, u_\mu(t)\times v_\mu(t)\right) dw(t) },
	\end{array}
\end{equation*}
with $z_{\mu}(0)=z_0=(u_0,v_0)$.
This allows to say that  if a process $u_{\mu}(t)$ has $M$-valued trajectories of class $C^1$ and 
		\begin{equation*}
			v_{\mu}(t)=\partial_{t}u_{\mu}(t),\ \ \ \ t\geq0,
		\end{equation*}
and $z_\mu(t)=(u_\mu(t),v_\mu(t))$, then the process $z_{\mu}$ is a mild solution of equation \eqref{SPDE1}, with initial condition $z_0$, if for every $t\geq 0$, $\P$-almost surely,
		\begin{equation*}
			\begin{array}{l}
				\ds{z_{\mu}(t)=\mathcal{S}_{\mu}(t)z_0+\frac{1}{\mu}\int_{0}^{t}\mathcal{S}_{\mu}(t-s)\big(0,F_\mu(z_\mu(s))-\gamma v_{\mu}(s)\big)ds }\\
				\vs
				\ds{\quad \quad \quad \quad \quad+\frac{1}{2\mu}\int_{0}^{t}\mathcal{S}_{\mu}(t-s)\big(0,\text{{\em tr}}_K\left(u_{\mu}(s)\times (u_{\mu}(s)\times v_{\mu}(s))\right)\big)ds}\\
				\vs
				\ds{\quad\quad \quad \quad\quad \quad\quad \quad \quad+\frac{1}{\sqrt{\mu}}\int_{0}^{t}\mathcal{S}_{\mu}(t-s)\big(0,(u_{\mu}(s)\times v_{\mu}(s))\,dw(s)\big).  }
			\end{array}
		\end{equation*}

\subsection{Proof of Theorem \ref{well_posedness_system}}

It is immediate to check 
that $F_\mu:\mathcal{H}_{k}\to H^{k}$ is Lipschitz continuous when restricted to balls, for all $k\geq 0$,  and has cubic growth. Thus, in view of Lemma \ref{lem-sm1}, the proof follows from a  modification of the arguments introduced in  \cite[proof of Theorems 2.9 and 2.10]{BC23}.

Due to the local Lipschitz continuity of all coefficients in $\mathcal{H}$, equation \eqref{SPDE2} admits a unique maximal local mild solution $z_\mu \in\,C([0,\tau_\mu);\mathcal{H})$, defined up to a certain stopping time $\tau_\mu$.
Our purpose is showing that $z_\mu(t)\in\, \mathcal{M}$, for all $t \in\,[0,\tau_\mu)$, and 
\begin{equation}\label{sm4}
\P(\tau_\mu=\infty)=1.
\end{equation}
In this way, we get the existence and uniqueness of a global mild solution $z_\mu \in\,C([0,+\infty);\mathcal{H}\cap \mathcal{M})$.

In order to prove the invariance of the tangent bundle $\mathcal{M}$, we introduce the following processes 
\[\vartheta_\mu(t):=\frac{1}{2}\left( \vert u_\mu(t)\vert_{H}^2-1\right), \ \ \ \ \ \ \ \eta_\mu(t):=\langle u_\mu(t),v_\mu(t)\rangle_H, \ \ \ \ t\in [0,\tau_\mu).\]
If we show that they satisfy the linear system
\begin{equation}
\label{eq-3.13-psi}
\left\{\begin{split}
d\vartheta_\mu(t)&=\eta_\mu(t)\,dt \\[10pt]
d\eta_\mu(t)&+\frac{\gamma}{\mu}\,d\vartheta_\mu(t)=\left(\frac 1\mu \vert u_\mu(t)\vert_{H^1}^2-\vert v_\mu(t)\vert_H^2\right)\vartheta_\mu(t),\ \ \ \ \ \ \ \ t\in [0, \tau_\mu),
\end{split}\right.
\end{equation}
 since   $\vartheta_\mu(0)=\eta_\mu(0)=0$, we obtain that   $\vartheta_\mu(t)=\eta_\mu(t)= 0$, for every $t \in\,[0,\tau_\mu)$, $\mathbb{P}$-a.s., and  this implies that $z_\mu(t) \in\,\mathcal{M}$, for every $t \in\,[0,\tau_\mu)$, $\mathbb{P}$-a.s.
 
 As in \cite{BC23}, it can be shown the following fact.

\begin{Lemma}\label{lem-Ito}
Assume that  a local process $z_\mu(t)=(u_\mu(t),v_\mu(t))$, $t \in\,[0,\sigma_\mu)$ is a solution to
\begin{equation}   \label{eq-3.10}
 z_\mu(t)=\mathcal{S}_\mu(t)z_0+ \int_0^t \mathcal{S}_\mu(t-s) \bigl(0,  f(s)
   \bigr) \, ds
+ \int_0^t \mathcal{S}_\mu(t-s) \bigl(0, g(s) \bigr)\,dw(s), \ \ \ \ \ \ \ t\in\,[0,\sigma_\mu).
\end{equation}
where all processes are progressively measurable,  $f$ is $H$-valued and $g$ is $\mathcal{T}_2(K,H)$-valued. Then, for every $t  \in\,[0,\sigma_\mu)$, $\mathbb{P}$-almost surely,
\begin{equation}
\begin{split}
\langle u_\mu(t),v_\mu(t)\rangle_H &=\langle u_\mu(0),v_\mu(0)\rangle_H -\frac 1\mu\int_0^t  \vert \partial_x u_\mu(s)\vert_{H}^2\,ds+\int_0^t  \langle u_\mu(s),f(s)\rangle_H\, ds
\\[10pt] 
&+\int_0^t  \vert v_\mu(s)\vert_{H}^2\,ds+\int_0^t  \langle u_\mu(s),g(s)\,dw(s)\rangle_H,
\end{split}
\label{eqn-3.11}
\end{equation}
and
\begin{equation}
\label{eqn-3.12}	
\begin{array}{l}
\ds{\vert v_\mu(t)\vert_H^2+\frac 2\mu \vert\partial_x u_\mu(t)\vert_H^2=\vert v_\mu(0)\vert_H^2+\frac 2\mu \vert\partial_x u_\mu(0)\vert_H^2+2\int_0^t\langle v_\mu(s),f(s)\rangle_H\,ds}\\
\vs 
\ds{\quad \quad  \quad  \quad  +2\int_0^t\langle v_\mu(s),g(s)dw(s)\rangle_H+\int_0^t\Vert g(s)\Vert_{\mathcal{T}_2(K,H)}^2\,ds.}
\end{array}
\end{equation}
\hfill $\Box$
\end{Lemma}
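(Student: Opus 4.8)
The plan is to read both identities as Itô's formula for two quadratic functionals of the solution, the only genuine difficulty being that $z_\mu$ is a \emph{mild} and not a strong solution, so that $Au_\mu$ is a priori undefined and Itô's formula cannot be applied directly. I would therefore regularise. Let $\{e_k\}_k$ be the orthonormal basis of $H$ made of the (vector-valued) Dirichlet eigenfunctions of $A$, with $Ae_k=-\alpha_ke_k$, let $P_n$ be the orthogonal projection of $H$ onto $\spn\{e_1,\dots,e_n\}$, and set $z_\mu^n=(u_\mu^n,v_\mu^n):=(P_nu_\mu,P_nv_\mu)$. Since $P_n$ commutes with $A$, hence with $\A_\mu$ and with $\mathcal{S}_\mu(t)$, the process $z_\mu^n$ is a strong solution of
\[
dz_\mu^n(t)=\A_\mu z_\mu^n(t)\,dt+\big(0,P_nf(t)\big)\,dt+\big(0,P_ng(t)\big)\,dw(t),
\]
taking values in the (spatially) finite-dimensional subspace $P_nH\times P_nH\subset D(\A_\mu)=\H_1$, to which the classical Hilbert-space Itô formula applies without restriction.

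For \eqref{eqn-3.11} I would apply Itô's formula to $\psi_1(u,v):=\langle u,v\rangle_H$ along $z_\mu^n$. As the $u$-component has finite variation there is no quadratic-variation correction, and the integration-by-parts identity $\langle u_\mu^n,Au_\mu^n\rangle_H=-\abs{\partial_xu_\mu^n}_H^2$ (legitimate at the projected level, since each $e_k$ satisfies the Dirichlet condition) turns the $\A_\mu$-drift into $-\tfrac1\mu\abs{\partial_xu_\mu^n}_H^2+\abs{v_\mu^n}_H^2$, producing exactly the projected version of \eqref{eqn-3.11}. For \eqref{eqn-3.12} the mechanism is the skew-adjointness of $\A_\mu$ with respect to the energy inner product
\[
\dInner{z_1,z_2}:=\tfrac1\mu\langle\partial_xu_1,\partial_xu_2\rangle_H+\langle v_1,v_2\rangle_H,
\]
namely $\dInner{\A_\mu z,z}=0$. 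Concretely, applying Itô's formula to $\psi_2(u,v):=\abs{v}_H^2+\tfrac1\mu\abs{\partial_xu}_H^2=\dInner{(u,v),(u,v)}$, the second-order drift $\tfrac2\mu\langle v_\mu^n,Au_\mu^n\rangle_H$ coming from $d\abs{v_\mu^n}_H^2$ is cancelled exactly by the term $\tfrac2\mu\langle\partial_xu_\mu^n,\partial_xv_\mu^n\rangle_H=-\tfrac2\mu\langle v_\mu^n,Au_\mu^n\rangle_H$ produced by $\tfrac1\mu\frac{d}{dt}\abs{\partial_xu_\mu^n}_H^2$; what survives, together with the Hilbert-Schmidt Itô correction $\|P_ng\|_{\mathcal{T}_2(K,H)}^2$ of the martingale part, is the projected version of \eqref{eqn-3.12}.

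It then remains to let $n\to\infty$. Since $z_\mu\in C([0,\sigma_\mu);\H)$ and $f,g$ are progressively measurable with values in $H$ and in $\mathcal{T}_2(K,H)$, I would localise by the stopping times $\sigma_{\mu,R}:=\inf\{t\ge0:\abs{z_\mu(t)}_\H\ge R\}\wedge(\sigma_\mu-\tfrac1R)$, so that all coefficients become square-integrable on $[0,\sigma_{\mu,R}]$. Then $P_nu_\mu\to u_\mu$ in $H^1$, $P_nv_\mu\to v_\mu$ in $H$, $P_nf\to f$ in $H$ and $P_ng\to g$ in $\mathcal{T}_2(K,H)$; the Bochner integrals converge by dominated convergence and the stochastic integrals by the Itô isometry, which yields \eqref{eqn-3.11}--\eqref{eqn-3.12} on $[0,\sigma_{\mu,R})$ and, letting $R\to\infty$, on all of $[0,\sigma_\mu)$.

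The delicate point --- and the reason the energy must be treated as a single functional rather than term by term --- is precisely the cancellation above. The quantity $\langle\partial_xu_\mu^n,\partial_xv_\mu^n\rangle_H$ involves $\partial_xv_\mu$, which has no meaning in the limit because $v_\mu$ only belongs to $H$; one therefore cannot write separate Itô formulas for $\abs{v_\mu}_H^2$ and for $\tfrac1\mu\abs{\partial_xu_\mu}_H^2$ and add them afterwards. The ill-defined contribution must be removed by the skew-adjointness identity $\dInner{\A_\mu z,z}=0$ at the regularised level, before the limit $n\to\infty$ is performed. Once this is respected the passage to the limit is routine, and the argument is the one already carried out in \cite{BC23}.
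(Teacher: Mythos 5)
Your argument is correct, and it supplies a proof that the paper itself omits: the text only says ``As in \cite{BC23}, it can be shown\dots'' and closes the lemma with a box. The scheme you describe --- project onto the span of the Dirichlet eigenfunctions of $A$ so that the projected process is a strong solution, apply the finite\mbox{-}dimensional It\^o formula to $\langle u,v\rangle_H$ (no cross\mbox{-}variation, since $u_\mu$ has finite variation) and to the energy functional (where the skew\mbox{-}adjointness of $\mathcal{A}_\mu$ kills the $A$\mbox{-}terms \emph{before} the limit), then localise and pass to the limit --- is exactly the standard argument used in \cite{BC23} for this kind of identity, and your observation that the two halves of the energy cannot be treated separately because $\partial_x v_\mu$ is meaningless in the limit is the right one.

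One substantive remark: your computation establishes the identity for $\vert v_\mu\vert_H^2+\tfrac{1}{\mu}\vert\partial_x u_\mu\vert_H^2$, whereas \eqref{eqn-3.12} is printed with the coefficient $\tfrac{2}{\mu}$. Your version is the correct one. With $\tfrac{2}{\mu}$ the drift contributions $\tfrac{2}{\mu}\langle v_\mu^n,Au_\mu^n\rangle_H$ from $d\vert v_\mu^n\vert_H^2$ and $-\tfrac{4}{\mu}\langle v_\mu^n,Au_\mu^n\rangle_H$ from $\tfrac{2}{\mu}\tfrac{d}{dt}\vert\partial_x u_\mu^n\vert_H^2$ leave an uncancelled term $-\tfrac{2}{\mu}\langle v_\mu^n,Au_\mu^n\rangle_H$, which has no limit as $n\to\infty$; only the coefficient $\tfrac{1}{\mu}$ produces the cancellation. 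This is also the version consistent with the way the identity is used afterwards, since it yields \eqref{sm10-bis} upon multiplication by $\mu$. So \eqref{eqn-3.12} contains a typo, and what you have proved is the corrected statement.
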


Now, the local solution $z_\mu(t)=(u_\mu(t), v_\mu(t))$ of equation \eqref{SPDE2} satisfies equation \eqref{eq-3.10}, with
\[f(s):= -\vert v_\mu(s)\vert_H^2u_\mu(s) +\frac 1\mu\vert u_\mu(s)\vert_{H^1}^2u_\mu(s)-\frac{\gamma}{\mu} v_\mu(s) +\frac1{2\mu} \text{tr}_{K} \left[u_\mu(s)\times(u_\mu(s)\times v_\mu(s))\right],\]
and
\[g(s):=\frac 1{\sqrt{\mu}}\sigma(z_\mu(s))=\frac 1{\sqrt{\mu}}\,u_\mu(s)\times v_\mu(s).\]
Notice that
\[\langle u_\mu(s),f(s)\rangle_H= -\vert v_\mu(s)\vert_H^2\vert u_\mu(s)\vert_H^2 +\frac 1\mu\vert u_\mu(s)\vert_{H^1}^2\vert u_\mu(s)\vert_H^2-\frac{\gamma}{\mu} \eta_\mu(s),\]
and for every $\xi \in\,K$
\[\langle u_\mu(s),g(s)\xi\rangle_H=0.\]

Thus, thanks to identity \eqref{eqn-3.11} in Lemma \ref{lem-Ito}, we have
\[\begin{array}{l}
\ds{\eta_\mu(t)-\eta_\mu(0)}\\
\vs 
\ds{=
\frac 1\mu\int_0^t  \vert  u_\mu(s)\vert_{H^1}^2\left(\vert u_\mu(s)\vert_H^2-1\right)\,ds-\int_0^t  \vert v_\mu(s)\vert_H^2\left(\vert u_\mu(s)\vert_H^2-1\right)\,ds-\frac \gamma\mu\int_0^t\langle u_\mu(s),v_\mu(s)\rangle_H\,ds}\\
\vs
\ds{=\frac 1\mu\int_0^t  \vert  u_\mu(s)\vert_{H^1}^2\,\vartheta_\mu(s)\,ds-\int_0^t  \vert v_\mu(s)\vert_H^2\,\vartheta_\mu(s)\,ds-\frac \gamma\mu\int_0^t\eta_\mu(s)\,ds.}	
\end{array}\]
In particular, the processes $\vartheta_\mu(t)$ and $\eta_\mu(t)$ satisfy  equation \eqref{eq-3.13-psi}, and, as explained above, this implies that $z_\mu(t) \in\,\mathcal{M}$, for every $t \in\,[0,\tau_\mu)$, $\mathbb{P}$-a.s.

Next, let us prove \eqref{sm4}. Since $(u_\mu(t),v_\mu(t)) \in\,\mathcal{M}$, we have
\[\langle v_\mu(t),f(t)\rangle_H=-\frac\gamma\mu\,\vert v_\mu(t)\vert_H^2+\frac1{2\mu} \langle v_\mu(t),\text{tr}_{K} \left[u_\mu(s)\times(u_\mu(s)\times v_\mu(s))\right]\rangle_H.\]
As
\[ \langle v_\mu(t),g(t)\xi\rangle_H=0,\ \ \ \ \ \  \xi \in\,K,\]  for every $t<\tau_\mu$ this gives
\begin{equation*}
	\begin{array}{ll}
		&\ds{d\abs{v_\mu(t)}_{H}^{2}=\frac{2}{\mu}\Inner{v_\mu(t),\partial^2_x u_\mu(t)}_{H}dt-\frac{2\gamma}\mu\abs{v_\mu(t)}_{H}^{2}dt+\frac 1\mu\,\norm{u(t)\times v(t)}_{\mathcal{T}_{2}(K,H)}^{2}dt}\\
		\vs
		&\ds{\quad\quad\quad\quad \quad \quad +\frac1{\mu} \langle v_\mu(t),\text{tr}_{K} \left[u_\mu(s)\times(u_\mu(s)\times v_\mu(s))\right]\rangle_H\,dt.}
	\end{array}
\end{equation*}	
In view of \eqref{sm20},
for every $h,k \in\,\mathbb{R}^3$ we have
\[\vert h\times k\vert^2+(k\cdot[h\times(h\times k)])=\vert h\times k\vert^2+\vert(h\cdot k)\vert^2-\vert h\vert^2\vert k\vert^2=0.\]
Therefore, we obtain
\[d\abs{v_\mu(t)}_{H}^{2}=	-\frac 1\mu\,d\abs{u_\mu(t)}_{H^{1}}^{2}-\frac{2\gamma}\mu\abs{v_\mu(t)}_{H}^{2}dt,  \]
and this implies that for every $t<\tau_\mu$ 
\begin{equation}\label{sm10-bis}
	\abs{u_\mu(t)}_{H^{1}}^{2}+\mu\,\abs{v_\mu(t)}_{H}^{2}+2\gamma\int_{0}^{t}\abs{v_\mu(s)}_{H}^{2}ds=\abs{u_0}_{H^{1}}^{2}+\mu\,\abs{v_0}_{H}^{2},\ \ \ \ \P\text{-a.s.}
\end{equation}
In particular, we can  conclude that
for every $\mu>0$  there exists some deterministic constant $\kappa_\mu>0$ such that
\[\sup_{t \in\,[0,\tau_\mu)}\vert z_\mu(t)\vert_{\mathcal{H}}\leq \kappa_\mu,\ \ \ \ \ \ \mathbb{P}-\text{a.s.},\]
and  \eqref{sm4} follows. Finally, by combining together  \eqref{sm4} and \eqref{sm10-bis}, we obtain \eqref{sm2}.

%%%%%%%%%%%%%%%%%%%%%%%%%%%%%%%%%%%%%%%

\section{Energy Estimates}
\label{sec5}

In Theorem \ref{well_posedness_system} we have seen that for every  $(u_0,v_0)\in \H\cap \M$ and  $\mu>0$, equation \eqref{SPDE1} (and, equivalently, equation \eqref{SPDE1-bis}) admits a unique mild solution  $z_{\mu}=(u_{\mu},v_{\mu})\in C([0,+\infty);\H)$. Moreover, for every $\mu>0$ and $t>0$ the following identity holds
	\begin{equation}\label{uniform_est1}
	\abs{u_{\mu}(t)}_{H^{1}}^{2}+\mu\abs{v_{\mu}(t)}_{H}^{2}+2\gamma\int_{0}^{t}\abs{v_{\mu}(s)}_{H}^{2}ds=\abs{u_0}_{H^{1}}^{2}+\mu\abs{v_0}_{H}^{2},\ \ \ \ \P\text{-a.s.}
	\end{equation}
In this section, our purpose is proving that for every $(u_0,v_0)\in \H_{1}\cap\M$, there exists a constant $c>0$  such that for every $\mu\in(0,1)$ and $t\geq 0$
	\begin{equation}\label{uniform_est4-bis}
		\E\sup_{r\in[0,t]}\Big(\abs{u_{\mu}(r)}_{H^{2}}^{2}+\mu\abs{v_{\mu}(r)}_{H^{1}}^{2}\Big)+\E\int_{0}^{t}\abs{v_{\mu}(s)}_{H^{1}}^{2}ds\leq c.
	\end{equation}
The inequality above is a consequence of the following Lemma.

\begin{Lemma} \label{lemma3.1}
	For every $(u_0,v_0)\in \H_{1}\cap \M$, there exist two constants $c_1, c_2>0$ depending only on $\vert(u_0,\sqrt{\mu}\,v_0)\vert_{\mathcal{H}_1}$,  $\varphi$ and $\varphi_{1}$, such that for every $\mu\in(0,1)$ and $t\geq 0$
	\begin{equation}\label{uniform_est2}
		\begin{array}{l}
			\ds{\E\sup_{r \in\,[0,t]}\Big(\abs{u_{\mu}(r)}_{H^{2}}^{2}+\mu\abs{v_{\mu}(r)}_{H^{1}}^{2}+\mu\abs{u_{\mu}(r)}_{H^{1}}^{2}\abs{v_{\mu}(r)}_{H}^{2}\Big) }\\
			\vs
			\ds{\quad +\E\int_{0}^{t}\Big(\abs{v_{\mu}(s)}_{H^{1}}^{2}+ \abs{u_{\mu}(s)}_{H^{2}}^{2}\abs{v_{\mu}(s)}_{H}^{2}+\mu\abs{v_{\mu}(s)}_{H^{1}}^{2}\abs{v_{\mu}(s)}_{H}^{2}+\mu\abs{u_{\mu}(s)}_{H^{1}}^{2}\abs{v_{\mu}(s)}_{H}^{4}\Big)ds}\\
			\vs 
			\ds{\quad \quad \quad \quad \quad \leq c_1 \Big(\abs{u_0}_{H^{2}}^{2}+\mu\abs{v_0}_{H^{1}}^{2}+\mu\abs{u_{0}}_{H^{1}}^{2}\abs{v_0}_{H}^{2}\Big)\exp\Big(c_2\big(\abs{u_0}_{H^{1}}^{2}+\mu\abs{v_0}_{H}^{2}\big)\Big) }.
		\end{array}
	\end{equation}
\end{Lemma}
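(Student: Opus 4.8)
The plan is to prove \eqref{uniform_est2} by applying an It\^o formula to a suitably \emph{augmented} energy functional and then closing the resulting differential inequality by Gronwall's lemma. Since the mild solution is only known to belong to $\H_{1}=H^{2}\times H^{1}$, whereas the manipulations below formally require $v_{\mu}(t)\in D(A)=H^{2}$, all the identities should first be derived for the finite-dimensional spectral (Galerkin) approximation of \eqref{SPDE1-bis} associated with the eigenbasis of $A$, and then recovered as the dimension tends to infinity using the uniform bounds and lower semicontinuity of the norms; I will suppress this regularization. The functional to be tracked is
\[
\Xi_{\mu}(t):=\abs{u_{\mu}(t)}_{H^{2}}^{2}+\mu\abs{v_{\mu}(t)}_{H^{1}}^{2}+\mu\abs{u_{\mu}(t)}_{H^{1}}^{2}\abs{v_{\mu}(t)}_{H}^{2},
\]
whose supremum is exactly the left-hand side of \eqref{uniform_est2}; note also that the dissipation integrand in \eqref{uniform_est2} equals $\abs{v_\mu}_{H^1}^2+\Xi_\mu\abs{v_\mu}_H^2$.

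First I would compute $d\Xi_{\mu}$ from the It\^o form \eqref{SPDE1-bis}. Because $du_{\mu}=v_{\mu}\,dt$ carries no noise, $d\abs{u_{\mu}}_{H^{2}}^{2}=2\Inner{Au_{\mu},Av_{\mu}}_{H}\,dt$. For the term $\mu\abs{v_{\mu}}_{H^{1}}^{2}=\mu\Inner{-Av_{\mu},v_{\mu}}_{H}$, applying It\^o and multiplying by $\mu$ turns the $\tfrac1\mu Au_{\mu}$ part of the drift into the contribution $-2\Inner{Au_{\mu},Av_{\mu}}_{H}\,dt$, which cancels $d\abs{u_{\mu}}_{H^{2}}^{2}$ exactly: this is the conservative cancellation of the wave energy, raised one level in regularity. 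What survives from this pair is the dissipation $-2\gamma\abs{v_{\mu}}_{H^{1}}^{2}$, the two nonlinear terms $2\abs{u_{\mu}}_{H^{1}}^{2}\Inner{\partial_{x}u_{\mu},\partial_{x}v_{\mu}}_{H}$ and $-2\mu\abs{v_{\mu}}_{H}^{2}\Inner{\partial_{x}u_{\mu},\partial_{x}v_{\mu}}_{H}$, the Stratonovich trace term $\Inner{-Av_{\mu},\text{tr}_{K}[u_{\mu}\times(u_{\mu}\times v_{\mu})]}_{H}$, the It\^o correction $\norm{\sigma(z_{\mu})}_{\mathcal{T}_{2}(K,H^{1})}^{2}$, and a martingale.

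The role of the cross term $\mu\abs{u_{\mu}}_{H^{1}}^{2}\abs{v_{\mu}}_{H}^{2}$ now becomes transparent. Exactly the computation that produced \eqref{sm10-bis} shows that $\abs{v_{\mu}}_{H}^{2}$ has no martingale part and obeys $d\abs{v_{\mu}}_{H}^{2}=-\tfrac1\mu\,d\abs{u_{\mu}}_{H^{1}}^{2}-\tfrac{2\gamma}{\mu}\abs{v_{\mu}}_{H}^{2}\,dt$; a product-rule computation then gives $d(\mu\abs{u_{\mu}}_{H^{1}}^{2}\abs{v_{\mu}}_{H}^{2})$ precisely the two terms $-2\abs{u_{\mu}}_{H^{1}}^{2}\Inner{\partial_{x}u_{\mu},\partial_{x}v_{\mu}}_{H}$ and $+2\mu\abs{v_{\mu}}_{H}^{2}\Inner{\partial_{x}u_{\mu},\partial_{x}v_{\mu}}_{H}$, which cancel the two nonlinear terms above, plus a genuine dissipation $-2\gamma\abs{u_{\mu}}_{H^{1}}^{2}\abs{v_{\mu}}_{H}^{2}$. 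The second key cancellation is between the trace term and the Hilbert--Schmidt correction: writing $\text{tr}_{K}[u\times(u\times v)]=\varphi(-\abs{u}^{2}v+(u\cdot v)u)$, integrating by parts, and keeping the highest-order piece, the trace term contributes $-\int_{0}^{L}\varphi(\abs{u_{\mu}}^{2}\abs{\partial_{x}v_{\mu}}^{2}-(u_{\mu}\cdot\partial_{x}v_{\mu})^{2})\,dx$, while the leading piece of $\norm{\sigma(z_{\mu})}_{\mathcal{T}_{2}(K,H^{1})}^{2}=\sum_{i}\abs{\partial_{x}((u_{\mu}\times v_{\mu})\xi_{i})}_{H}^{2}$ is $+\int_{0}^{L}\varphi\,\abs{u_{\mu}\times\partial_{x}v_{\mu}}^{2}\,dx$, and these are identical with opposite signs, so they cancel. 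This is the analytic signature of the energy-preserving Stratonovich structure.

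After these cancellations the only remaining terms in $d\Xi_{\mu}$, besides the friction dissipations and the martingale, are lower-order: they involve $\varphi'$ and $\varphi_{1}$ (controlled by Hypothesis \ref{H1} and the ensuing Remark) and products of $u_{\mu},\partial_{x}u_{\mu},v_{\mu},\partial_{x}v_{\mu}$ with \emph{at most one} factor $\partial_{x}v_{\mu}$. Each such term carries a coefficient of order $\abs{u_{\mu}}_{H^{1}}^{2}$, which by \eqref{uniform_est1} is only bounded by $C_{0}:=\abs{u_{0}}_{H^{1}}^{2}+\mu\abs{v_{0}}_{H}^{2}$ and can therefore \emph{not} be absorbed directly into $-2\gamma\abs{v_{\mu}}_{H^{1}}^{2}$. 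The device is the one-dimensional Agmon inequality $\abs{w}_{\infty}^{2}\le c\,\abs{w}_{H}\abs{w}_{H^{1}}$ (together with $\abs{u_{\mu}}_{H}=1$ on $M$): applied to the $L^{\infty}$-factors it turns every such term into $\varepsilon\abs{v_{\mu}}_{H^{1}}^{2}+c_{\varepsilon}\,p(C_{0})\abs{v_{\mu}}_{H}^{2}$, so that for small $\varepsilon$ the $\abs{v_{\mu}}_{H^{1}}^{2}$-part is swallowed by the dissipation and only terms weighted by $\abs{v_{\mu}}_{H}^{2}$ survive. Taking the supremum over $[0,t]$, expectations, and estimating the martingale by Burkholder--Davis--Gundy (its quadratic variation $\lesssim \int_{0}^{t}\mu\abs{v_{\mu}}_{H^{1}}^{2}\norm{\sigma(z_{\mu})}_{\mathcal{T}_{2}}^{2}\,ds$ is split by Young into $\tfrac12\E\sup_{[0,t]}\Xi_{\mu}$ plus a $\abs{v_{\mu}}_{H}^{2}$-weighted remainder) leads to an inequality of the schematic form $\E\sup_{[0,t]}\Xi_{\mu}+\E\int_{0}^{t}(\abs{v_{\mu}}_{H^{1}}^{2}+\Xi_{\mu}\abs{v_{\mu}}_{H}^{2})\,ds\le c\,\Xi_{\mu}(0)+c\int_{0}^{t}\E[\Xi_{\mu}(s)]\,\abs{v_{\mu}(s)}_{H}^{2}\,ds$; Gronwall's lemma with the time-integrable weight $\abs{v_{\mu}(s)}_{H}^{2}$, for which \eqref{uniform_est1} yields $\int_{0}^{t}\abs{v_{\mu}}_{H}^{2}\,ds\le C_{0}/(2\gamma)$, then produces exactly the factor $\exp(c_{2}(\abs{u_{0}}_{H^{1}}^{2}+\mu\abs{v_{0}}_{H}^{2}))$ and the prefactor $c_{1}\Xi_{\mu}(0)$. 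I expect the main obstacle to be twofold: rigorously justifying the It\^o computation at a regularity the solution does not a priori possess (handled by the Galerkin scheme, where preserving both the constraint $z_{\mu}^{n}\in\M$ and the exact cancellations at the discrete level is delicate), and the precise bookkeeping of the many nonlinear and noise-induced terms, in particular verifying the two exact cancellations above and ensuring, via Agmon and Young, that every term of order $\abs{v_{\mu}}_{H^{1}}^{2}$ is absorbed while only $\abs{v_{\mu}}_{H}^{2}$-weighted terms drive the Gronwall estimate.
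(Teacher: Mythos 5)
Your proposal follows essentially the same route as the paper: the same augmented functional, the same three cancellations (the wave-energy cancellation at the $H^2\times H^1$ level, the cancellation of the nonlinear terms via the cross term $\mu\abs{u_\mu}_{H^1}^2\abs{v_\mu}_H^2$, and the cancellation of the highest-order piece of the Stratonovich trace term against the leading piece of the It\^o correction, which in the paper is carried out via the identity \eqref{sm11}), the same absorption of the surviving $\abs{v_\mu}_{H^1}^2$-terms into the friction using \eqref{uniform_est1}, and the same use of $\int_0^t\abs{v_\mu(s)}_H^2\,ds\le (\abs{u_0}_{H^1}^2+\mu\abs{v_0}_H^2)/2\gamma$ to produce the exponential factor. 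The one point where your write-up is under-specified is the closing step: the schematic inequality you arrive at has the random weight $\abs{v_\mu(s)}_H^2$ sitting inside the expectation next to $\Xi_\mu(s)$, and the classical Gronwall lemma applied to $t\mapsto\E\sup_{[0,t]}\Xi_\mu$ does not close in that form; the paper's fix is precisely to multiply by the pathwise exponential weight $Y_{\mu,a}(t)=\exp(-a\int_0^t\abs{v_\mu(s)}_H^2\,ds)$ \emph{before} taking suprema and expectations, so that the derivative of $Y_{\mu,a}$ generates the negative terms that absorb every $\abs{v_\mu}_H^2$-weighted error, and only at the very end is $Y_{\mu,a}$ bounded below by a deterministic constant. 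You have all the ingredients for this, but as literally written your Gronwall step would need to be replaced by that device (or by a stochastic Gronwall lemma) to be rigorous.
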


\begin{proof}
By applying  It\^o's formula to $\abs{v_{\mu}(t)}_{H^{1}}^{2}$, we get 
\begin{equation}\label{sm9}
	\begin{array}{ll}
		&\ds{d\abs{v_{\mu}(t)}_{H^{1}}^{2} = \frac{2}{\mu}\Big(\Inner{v_{\mu}(t),\partial^2_x u_{\mu}(t)}_{H^{1}}+\abs{u_{\mu}(t)}_{H^{1}}^{2}\Inner{v_{\mu}(t),u_{\mu}(t)}_{H^{1}}-\mu\abs{v_{\mu}(t)}_{H}^{2}\Inner{v_{\mu}(t),u_{\mu}(t)}_{H^{1}}}\\
		\vs
		&\ds{-\gamma\abs{v_{\mu}(t)}_{H^{1}}^{2}+\frac{1}{2}\Inner{v_{\mu}(t),\text{tr}_{K}\big(u_{\mu}(t)\times (u_{\mu}(t)\times v_{\mu}(t))\big) }_{H^{1}}+\frac{1}{2}\norm{u_{\mu}(t)\times v_{\mu}(t)}_{\mathcal{T}_{2}(K,H^{1})}^{2}\Big)\,dt }\\
		\vs
		&\ds{\quad\quad\quad\quad \quad\quad\quad\quad+\frac{2}{\sqrt{\mu}}\Inner{v_{\mu}(t), (u_{\mu}(t)\times v_{\mu}(t))dw(t) }_{H^{1}}  }\\
		\vs
		&\ds{\quad\quad\quad\quad =-\frac{1}{\mu}d\abs{u_{\mu}(t)}_{H^{2}}^{2}+\abs{u_{\mu}(t)}_{H^{1}}^{2}\Big(\frac{1}{\mu}d\abs{u_{\mu}(t)}_{H^{1}}^{2}+d\abs{v_{\mu}(t)}_{H}^{2}\Big)-d\Big(\abs{u_{\mu}(t)}_{H^{1}}^{2}\abs{v_{\mu}(t)}_{H}^{2}\Big)  }\\
		\vs
		&\ds{+\frac{1}{\mu}\Big(-2\gamma\abs{v_{\mu}(t)}_{H^{1}}^{2}+\Inner{v_{\mu}(t),\text{tr}_{K}\big(u_{\mu}(t)\times (u_{\mu}(t)\times v_{\mu}(t))\big) }_{H^{1}}+\norm{u_{\mu}(t)\times v_{\mu}(t)}_{\mathcal{T}_{2}(K,H^{1})}^{2}\Big)\,dt }\\
		\vs
		&\ds{\quad\quad\quad\quad \quad\quad\quad\quad\quad+\frac{2}{\sqrt{\mu}}\Inner{v_{\mu}(t), (u_{\mu}(t)\times v_{\mu}(t))dw(t) }_{H^{1}}  }.
	\end{array}
\end{equation}
According to \eqref{uniform_est1}, this implies that
\begin{equation*}
	\begin{array}{l}
		\ds{ d\Big(\abs{u_{\mu}(t)}_{H^{2}}^{2}+\mu\abs{v_{\mu}(t)}_{H^{1}}^{2}+\mu\abs{u_{\mu}(t)}_{H^{1}}^{2}\abs{v_{\mu}(t)}_{H}^{2}  \Big)    }\\
		\vs
		\ds{\quad\quad =-2\gamma\abs{u_{\mu}(t)}_{H^{1}}^{2}\abs{v_{\mu}(t)}_{H}^{2}dt-2\gamma\abs{v_{\mu}(t)}_{H^{1}}^{2}dt+\Inner{v_{\mu}(t),\text{tr}_{K}\big(u_{\mu}(t)\times(u_{\mu}(t)\times v_{\mu}(t))\big) }_{H^{1}}dt   }\\
		\vs
		\ds{\quad\quad\quad\quad +\norm{u_{\mu}(t)\times v_{\mu}(t)}_{\mathcal{T}_{2}(K,H^{1})}^{2}dt+2\sqrt{\mu} \Inner{v_{\mu}(t),(u_{\mu}(t)\times v_{\mu}(t))dw(t) }_{H^{1}} }.
	\end{array}
\end{equation*}
Now, let us define 
\begin{equation}\label{sm10}
	Y_{\mu,a}(t):=\exp\left(-a\int_{0}^{t} \abs{v_{\mu}(s)}_{H}^{2}ds\right),\ \ \ \ t\geq 0,\ \ \ \ \mu>0,
\end{equation}
for some constant $a>0$ to be determined later. We have
\begin{equation*}
	\begin{array}{l}
		\ds{d\Big(Y_{\mu,a}(t)\Big(\abs{u_{\mu}(t)}_{H^{2}}^{2}+\mu\abs{v_{\mu}(t)}_{H^{1}}^{2}+\mu\abs{u_{\mu}(t)}_{H^{1}}^{2}\abs{v_{\mu}(t)}_{H}^{2}  \Big) \Big)   }\\
		\vs
		\ds{\quad =Y_{\mu,a}(t)\Big( -a\abs{u_{\mu}(t)}_{H^{2}}^{2}\abs{v_{\mu}(t)}_{H}^{2}-a\mu\abs{v_{\mu}(t)}_{H^{1}}^{2}\abs{v_{\mu}(t)}_{H}^{2}-a\mu\abs{u_{\mu}(t)}_{H^{1}}^{2}\abs{v_{\mu}(t)}_{H}^{4}  }\\
		\vs
		\ds{\quad\quad\quad -2\gamma\abs{u_{\mu}(t)}_{H^{1}}^{2}\abs{v_{\mu}(t)}_{H}^{2}-2\gamma\abs{v_{\mu}(t)}_{H^{1}}^{2}+\Inner{v_{\mu}(t),\text{tr}_{K}\big(u_{\mu}(t)\times(u_{\mu}(t)\times v_{\mu}(t))\big) }_{H^{1}}   }\\
		\vs
		\ds{\quad\quad\quad\quad \quad+\norm{u_{\mu}(t)\times v_{\mu}(t)}_{\mathcal{T}_{2}(K,H^{1})}^{2}\Big) dt +2\sqrt{\mu}\ Y_{\mu,a}(t)\Inner{v_{\mu}(t),(u_{\mu}(t)\times v_{\mu}(t))dw(t) }_{H^{1}} }.
	\end{array}
\end{equation*}
Thus, if we integrate with respect to $t\geq 0$, we get 
\begin{equation}\label{sm102}
	\begin{array}{l}
		\ds{ Y_{\mu,a}(t)\Big( \abs{u_{\mu}(t)}_{H^{2}}^{2}+\mu\abs{v_{\mu}(t)}_{H^{1}}^{2}+\mu\abs{u_{\mu}(t)}_{H^{1}}^{2}\abs{v_{\mu}(t)}_{H}^{2} \Big) +\int_{0}^{t}Y_{\mu,a}(s)\Big( a\abs{u_{\mu}(s)}_{H^{2}}^{2}\abs{v_{\mu}(s)}_{H}^{2} }\\
		\vs
		\ds{\quad +a\mu\abs{v_{\mu}(s)}_{H^{1}}^{2}\abs{v_{\mu}(s)}_{H}^{2}+a\mu\abs{u_{\mu}(s)}_{H^{1}}^{2}\abs{v_{\mu}(s)}_{H}^{4}+2\gamma\abs{u_{\mu}(s)}_{H^{1}}^{2}\abs{v_{\mu}(s)}_{H}^{2}+2\gamma\abs{v_{\mu}(s)}_{H^{1}}^{2}\Big)ds }\\
		\vs
		\ds{ \quad \quad\quad \quad =\abs{u_0}_{H^{2}}^{2}+\mu\abs{v_0}_{H^{1}}^{2}+\mu\abs{u_0}_{H^{1}}^{2}\abs{v_0}_{H}^{2} }\\
		\vs
		\ds{\quad \quad \quad \quad \quad + \int_{0}^{t}Y_{\mu,a}(s)\,J(u_{\mu}(s),v_{\mu}(s))ds+2\sqrt{\mu}\int_{0}^{t}Y_{\mu,a}(s)\,G(u_{\mu}(s),v_{\mu}(s))dw(s)  },
	\end{array}
\end{equation} 
where 
\begin{equation*}
	J(u,v):=\Inner{v,\text{tr}_{K}(u\times (u\times v))}_{H^{1}}+\norm{u\times v}_{\mathcal{T}_{2}(K,H^{1})}^{2},\ \ \ \ (u,v)\in\H_{1},
\end{equation*}
and 
\begin{equation*}
	G(u,v)\xi:=\Inner{v,(u\times v)\xi}_{H^{1}},\ \ \ \ (u,v)\in\H_{1},\ \ \ \ \xi\in K.
\end{equation*}
Note that for every $(u,v)\in \H_{1}$
\begin{equation*}
	\begin{array}{l}
		\ds{D\big(\text{tr}_{K}\big(u\times (u\times v)\big)\big) =\left(-\abs{u}^{2}v+(u\cdot v)u\right)\varphi' }\\
		\vs
		\ds{\quad  \quad \quad \quad + \Big(-2(Du\cdot u)v-\abs{u}^{2}Dv+(Du\cdot v)u+(u\cdot Dv)u+(u\cdot v)Du\Big)\varphi },
	\end{array}
\end{equation*}
so that we have 
\begin{equation}\label{sm32}
	\begin{array}{ll}
		&\ds{\Inner{v,\text{tr}_{K}\big(u\times (u\times v)\big) }_{H^{1}} = \int_0^L\Big(-\abs{u}^{2}(v\cdot Dv)+(u\cdot v)(u\cdot Dv)\Big)\varphi'dx  }\\
		\vs
		&\ds{ +\int_0^L\Big(-2(Du\cdot u)(Dv\cdot v)-\abs{u}^{2}\abs{Dv}^{2}+(Du\cdot v)(u\cdot Dv)+(u\cdot Dv)^{2}+(u\cdot v)(Du\cdot Dv) \Big)\varphi dx }.
	\end{array}
\end{equation}
Moreover, 
\begin{equation} \label{sm33}
	\begin{array}{l}
		\ds{\norm{u\times v}_{\mathcal{T}_{2}(K,H^{1})}^{2} =\sum_{i=1}^{\infty}\int_0^L\big\lvert (u\times v)\xi_{i}'+D(u\times v)\xi_{i} \big\rvert^{2}dx=\int_0^L\sum_{i=1}^{\infty}\big\lvert (u\times v)\xi_{i}'+D(u\times v)\xi_{i} \big\rvert^{2}dx }\\
		\vs
		\ds{\quad\quad=\int_0^L|u\times v|^{2} \varphi_{1} dx + \int_0^L\big\lvert D(u\times v)\big\rvert^{2}  \varphi dx + 2\int_0^L(u\times v\cdot D(u\times v))\sum_{i}\xi_{i}\xi_{i}'dx }\\
		\vs
		\ds{\quad\quad = \int_0^L|u\times v|^{2} \varphi_{1}dx+\int_0^L\big|Du\times v+u\times Dv\big|^{2}\varphi dx }\\
		\vs 
		\ds{\quad\quad  \quad\quad \quad\quad \quad\quad +2\int_0^L(u\times v)\cdot \big(Du\times v+u\times Dv\big)\sum_{i}\xi_{i}\xi_{i}'dx  }.
	\end{array}
\end{equation}
Due to the well-known identity
\[((a\times b)\cdot(c\times d))=(a\cdot c)(b\cdot d)-(a\cdot d)(b\cdot c),\ \ \ \ \ \ \ a, b, c, d \in\,\mathbb{R}^3,\]
from \eqref{sm32} and \eqref{sm33} we get
\begin{equation}\label{sm11}
	\begin{array}{l}
	\ds{J(u,v)=\int_0^L(u\times v)^{2}\varphi_{1}dx+2\int_0^L\big((u\times v)\cdot (Du\times v)\big)\sum_{i}\xi_{i}\xi_{i}'\ dx}\\
	\vs
	\ds{\quad \quad \quad \quad \quad \quad \quad+\int_0^L\big[(Du\times v)^{2}-\big((Du\times u)\cdot (Dv\times v)\big)\big]\varphi \ dx.}\end{array}
\end{equation}
In particular, this implies that 
 for any $\epsilon>0$ 
\begin{equation*}
	\begin{array}{ll}
		&\ds{\abs{J(u,v)}\leq c\vert\varphi_{1}\vert_{\infty} \abs{u}_{H^{1}}^{2}\abs{v}_{H}^{2}+c\sqrt{\vert\varphi\vert_{\infty}\vert\varphi_{1}\vert_{\infty}} \abs{u}_{H^{1}}\abs{u}_{H^{2}}\abs{v}_{H}^{2} }\\
		\vs
		&\ds{\quad\quad \quad\quad \quad\quad +c\vert\varphi\vert_{\infty} \abs{u}_{H^{2}}\abs{u}_{H^{1}}\abs{v}_{H^{1}}\abs{v}_{H}\leq 
		\epsilon\,\abs{u}_{H^{1}}^2\abs{v}_{H^{1}}^{2}+c(\epsilon)\abs{u}_{H^{2}}^{2}\abs{v}_{H}^{2},}
	\end{array}
\end{equation*}
for some constant $c(\epsilon)=c(\epsilon,\vert\varphi\vert_{\infty},\vert\varphi_{1}\vert_{\infty})>0$.
Hence, according to \eqref{uniform_est1}, we obtain
\begin{equation}
\label{sm100}
\begin{array}{l}
\ds{	\mathbb{E}\sup_{r \in\,[0,t]}\,\left|\int_{0}^{r}Y_{\mu,a}(s)\,J(u_{\mu}(s),v_{\mu}(s))ds\right|}\\
\vs
\ds{\quad \quad \leq \epsilon\left(\abs{u_0}_{H^{1}}^{2}+\mu\abs{v_0}_{H}^{2}\right)\mathbb{E}\int_0^t	Y_{\mu,a}(s)\vert v_{\mu}(s)\vert_{H^1}^2\,ds+c(\epsilon)\,\mathbb{E}\int_0^t	Y_{\mu,a}(s)\vert u_\mu(s)\vert_{H^2}^2\vert v_{\mu}(s)\vert_{H}^2\,ds.}
\end{array}
\end{equation}

Next, for every $(u,v)\in \H_{1}$ and $k \in\,K$ we have
\[\begin{array}{l}
\ds{
G(u,v)k=\int_0^L\left(Dv\cdot (u\times v)k^\prime+\left[Du\times v+u\times Dv\right]k\right)\,dx}\\
\vs 
\ds{\quad \quad\quad \quad \quad\quad =	\int_0^L\left(Dv\cdot (u\times v)k^\prime+(Du\times v)k\right)\,dx,}	
\end{array}
\]
so that
\begin{equation}\label{sm13}
	\begin{array}{ll}
		&\ds{\norm{G(u,v)}_{\mathcal{T}_{2}(K,\mathbb{R})}^{2} = \sum_{i=1}^{\infty}\Big\lvert \int_0^L\left[\left(Dv\cdot(u\times v)\right)\xi_{i}'+\left(Dv\cdot(Du\times v)\right)\xi_{i}\right]\ dx\Big\rvert^{2} }\\
		\vs
		&\ds{\quad\quad \quad \quad \leq \abs{v}_{H^{1}}^{2}\sum_{i=1}^{\infty}\Big(\abs{(u\times v)\xi_{i}}_{H}^{2}+\abs{(Du\times v)\xi_{i}}_{H}^{2}\Big)\leq c\big(\vert \varphi\vert_\infty+\vert \varphi_1\vert_\infty\big)\abs{u}_{H^{2}}^{2}\abs{v}_{H^{1}}^{2}\abs{v}_{H}^{2} }.
	\end{array}
\end{equation}
This implies that for every $\epsilon>0$ we can fix  some $c(\epsilon)=c(\epsilon,\vert\varphi\vert_{\infty},\vert\varphi_{1}\vert_{\infty})>0$ such that,
\begin{equation}\label{sm101}
	\begin{array}{l}
		\ds{\sqrt{\mu}\ \E\sup_{r\in[0,t]}\Big\lvert \int_{0}^{r}Y_{\mu,a}(s)G(u_{\mu}(s),v_{\mu}(s))dw(s)\Big\rvert \leq c\,\E\left(\int_{0}^{t}\mu Y_{\mu,a}^{2}(s)\norm{G(u_{\mu}(s),v_{\mu}(s))}_{\mathcal{T}_{2}(K,\mathbb{R})}^{2}ds\right)^{\frac{1}{2}} }\\
		\vs
		\ds{\quad \quad \quad \quad \leq c\,\big(\vert \varphi\vert_\infty+\vert \varphi_1\vert_\infty\big)^{\frac{1}{2}}\E\left(\int_{0}^{t}\mu Y_{\mu,a}^{2}(s)\abs{u_{\mu}(s)}_{H^{2}}^{2}\abs{v_{\mu}(s)}_{H^{1}}^{2}\abs{v_{\mu}(s)}_{H}^{2}\,ds\right)^{\frac{1}{2}} }\\
		\vs
		\ds{\quad \quad \quad \quad \quad \quad \quad \quad \leq \epsilon\mu \ \E\sup_{r\in[0,t]}Y_{\mu,a}(r)\abs{v_{\mu}(r)}_{H^{1}}^{2}+c(\epsilon)\E\int_{0}^{t}Y_{\mu,a}(s)\abs{u_{\mu}(s)}_{H^{2}}^{2}\abs{v_{\mu}(s)}_{H}^{2}ds.  }
	\end{array}
\end{equation}

Therefore, if we pick \[\bar{\epsilon}:=\frac 12\wedge \gamma\,\left(\abs{u_0}_{H^{1}}^{2}+\mu\abs{v_0}_{H}^{2}\right)^{-1},\] 
 and
$\overline{a}=a(\bar{\epsilon})>0$ large enough  in \eqref{sm100} and \eqref{sm101}, from  \eqref{sm102} we get
\begin{equation*}
	\begin{array}{l}
		\ds{\E\sup_{r\in[0,t]}\Bigg(Y_{\mu,\overline{a}}(r)\Big(\abs{u_{\mu}(r)}_{H^{2}}^{2}+\mu\abs{v_{\mu}(r)}_{H^{1}}^{2}+\mu\abs{u_{\mu}(r)}_{H^{1}}^{2}\abs{v_{\mu}(r)}_{H}^{2}  \Big) \Bigg)   }\\
		\vs
		\ds{+c\,\E\int_{0}^{t}\ Y_{\mu,\overline{a}}(s)\Big(\abs{v_{\mu}(s)}_{H^{1}}^{2}+ \abs{u_{\mu}(s)}_{H^{2}}^{2}\abs{v_{\mu}(s)}_{H}^{2}+\mu\abs{v_{\mu}(s)}_{H^{1}}^{2}\abs{v_{\mu}(s)}_{H}^{2}+\mu\abs{u_{\mu}(s)}_{H^{1}}^{2}\abs{v_{\mu}(s)}_{H}^{4}\Big)ds  }\\
		\vs
		\ds{\quad\quad  \quad\quad  \quad\quad  \quad\quad  \quad\quad  \leq \abs{u_0}_{H^{2}}^{2}+\mu\abs{v_0}_{H^{1}}^{2}+\mu\abs{u_{0}}_{H^{1}}^{2}\abs{v_0}_{H}^{4}  }.
	\end{array}
\end{equation*}
Finally, since \eqref{uniform_est1} gives for every $t\geq0$
\begin{equation*}
	\int_{0}^{t}\abs{v_{\mu}(s)}_{H}^{2}ds\leq \frac{1}{2\gamma}\big(\abs{u_0}_{H^{1}}^{2}+\mu\abs{v_0}_{H}^{2}\big),\ \ \ \ \P-\text{a.s.},
\end{equation*}
we conclude 
\begin{equation*}
	\begin{array}{ll}
	&\ds{\E\sup_{r\in[0,t]}\Big(\abs{u_{\mu}(r)}_{H^{2}}^{2}+\mu\abs{v_{\mu}(r)}_{H^{1}}^{2}+\mu\abs{u_{\mu}(r)}_{H^{1}}^{2}\abs{v_{\mu}(r)}_{H}^{2}\Big) }\\
	\vs
	&\ds{\quad +c\E\int_{0}^{t}\Big(\abs{v_{\mu}(s)}_{H^{1}}^{2}+ \abs{u_{\mu}(s)}_{H^{2}}^{2}\abs{v_{\mu}(s)}_{H}^{2}+\mu\abs{v_{\mu}(s)}_{H^{1}}^{2}\abs{v_{\mu}(s)}_{H}^{2}+\mu\abs{u_{\mu}(s)}_{H^{1}}^{2}\abs{v_{\mu}(s)}_{H}^{4}\Big)ds  }\\
	\vs
	&\ds{\quad \quad \quad \quad \quad \leq \Big(\abs{u_0}_{H^{2}}^{2}+\mu\abs{v_0}_{H^{1}}^{2}+\mu\abs{u_{0}}_{H^{1}}^{2}\abs{v_0}_{H}^{2}\Big)\exp\Big(\frac{\overline{a}}{2\gamma}\big(\abs{u_0}_{H^{1}}^{2}+\mu\abs{v_0}_{H}^{2}\big)\Big)  },
	\end{array}
\end{equation*}
and this implies \eqref{uniform_est2}.

\end{proof}

\section{The limiting equation}\label{sec6}

We consider the following deterministic equation
\begin{equation}\label{limiting_equation}
	\le\{\begin{array}{l}
		\ds{\partial_{t}\Big[\Big(\gamma+\frac{1}{2}\varphi\abs{u(t)}^{2}\Big)u(t)\Big]=\partial^2_x u(t) +  \abs{\partial_x u(t)}_{H}^{2}u(t) +\frac{3\varphi}{2\gamma}\left(\left[\partial^2_x u(t)+\abs{\partial_x u(t)}_{H}^2u(t)\right] \cdot u(t)\right)u(t),}\\
		\vs 
		\ds{u(0,x)=u_{0}(x),\ \ \ \ u(t,0)=u(t,L)=0 },
	\end{array}\r.
\end{equation}
where, we recall,  $\varphi(x):=\sum_{i=1}^{\infty}\abs{\xi_{i}(x)}^{2}$, for $x\in (0,L)$.

\begin{Definition}
	Let $u_0\in H^{1}\cap M$. We say that $u$ is a solution to equation \eqref{limiting_equation} in $[0,T]$  if 
	\begin{equation*}
		u\in C([0,T];H^{1})\cap L^2(0,T;H^2),\ \ \ \ \ \ \  \partial_{t}u\in L^{2}(0,T;H),
	\end{equation*}
	and  the identity 
	\[\begin{array}{l}
		\ds{\Big(\gamma+\frac{1}{2}\varphi\abs{u(t)}^{2}\Big)u(t)=\Big(\gamma+\frac{1}{2}\varphi\abs{u_0}^{2}\Big)u_0}\\
		\vs 
		\ds{\quad \quad \quad  
+\int_0^t\left(\partial^2_x u(s) +  \abs{\partial_x u(s)}_{H}^{2}u(s) +\frac{3\varphi}{2\gamma}\left(\left[\partial^2_x u(s)+\abs{\partial_x u(s)}_{H}^2u(s)\right] \cdot u(s)\right)u(s)\right)\,ds},
	\end{array}\]
	holds in $H$, for a.e. $t \in\,[0,T]$.
	
\end{Definition}

In the following lemma we show that equation \eqref{limiting_equation} has an equivalent formulation.
\begin{lem}
	Let $u_0\in H^{1}\cap M$. Then any function $u \in\,C([0,T];H^{1})\cap L^2(0,T;H^2)$, with $\partial_{t}u\in L^{2}(0,T;H)$, satisfies equation \eqref{limiting_equation} if and only if  satisfies the following equation
	\begin{equation}\label{limiting_equation_alt}
		\le\{\begin{array}{l}
			\ds{\gamma\partial_{t}u(t)=\partial^2_x u(t) + \abs{\partial_x u(t)}_{H}^{2}u(t)+\frac{1}{2}\,\text{{\em tr}}_{K}\big(u(t)\times (u(t)\times \partial_{t}u(t))\big),\ \ \ \ t>0, }\\
			\vs 
			\ds{u(0,x)=u_{0}(x),\ \ \ \ u(t,0)=u(t,L)=0 }.
		\end{array}\r.
	\end{equation}
\end{lem}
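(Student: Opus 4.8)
The plan is to reduce both equations, under the stated regularity, to pointwise-in-time identities in $H$ and to show that these identities coincide up to a term that provably vanishes. Throughout write $v:=\partial_{t}u$ and $b:=\partial^2_x u+\abs{\partial_x u}_{H}^{2}u$, so that $v,b\in L^2(0,T;H)$. Since $u\in C([0,T];H^1)$ and $H^1\hookrightarrow L^\infty(0,L)$, the quantity $\abs{u(t)}^{2}u(t)$ is, for a.e.\ $t$, a pointwise cubic nonlinearity of an $L^\infty$ function, the map $t\mapsto\big(\gamma+\tfrac12\varphi\abs{u(t)}^{2}\big)u(t)$ is absolutely continuous as an $H$-valued function, and the product rule $\partial_{t}(\abs{u}^{2}u)=2(u\cdot v)u+\abs{u}^{2}v$ holds in $L^2(0,T;H)$. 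Differentiating the integrated identity in the Definition therefore shows that $u$ solves \eqref{limiting_equation} if and only if
\[
\gamma v+\varphi\,(u\cdot v)\,u+\tfrac12\varphi\abs{u}^{2}v=b+\frac{3\varphi}{2\gamma}\,(b\cdot u)\,u
\]
holds in $H$ for a.e.\ $t\in[0,T]$; call this identity $(\ast)$.

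Next I would rewrite \eqref{limiting_equation_alt} in the same shape. By \eqref{sm1310} together with the elementary identity \eqref{sm20} one has $\text{tr}_{K}(u\times(u\times v))=\varphi\,(u\times(u\times v))=\varphi\big(-\abs{u}^{2}v+(u\cdot v)u\big)$, so \eqref{limiting_equation_alt} is equivalent to
\[
\gamma v+\tfrac12\varphi\abs{u}^{2}v-\tfrac12\varphi\,(u\cdot v)\,u=b;
\]
call this $(\ast\ast)$. Subtracting, $(\ast)$ and $(\ast\ast)$ differ exactly by
\[
\tfrac32\,\varphi\Big[(u\cdot v)-\tfrac1\gamma(b\cdot u)\Big]u,
\]
so the two equations are equivalent precisely when the scalar relation $\gamma\,(u\cdot v)=(b\cdot u)$ holds a.e.\ in $(0,T)\times(0,L)$.

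Finally I would close the argument by extracting this scalar relation from each equation separately, which is the real heart of the proof. Taking the pointwise $\mathbb{R}^3$ inner product of $(\ast\ast)$ with $u$ makes the two $\varphi$-terms cancel and yields $\gamma(u\cdot v)=(b\cdot u)$ directly. Taking the inner product of $(\ast)$ with $u$ instead gives $\gamma(u\cdot v)\big(1+\tfrac{3\varphi\abs{u}^{2}}{2\gamma}\big)=(b\cdot u)\big(1+\tfrac{3\varphi\abs{u}^{2}}{2\gamma}\big)$, and since $\gamma>0$ and $\varphi\geq0$ the common factor is $\geq1$, so again $\gamma(u\cdot v)=(b\cdot u)$. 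In either direction the displayed difference term vanishes, whence $(\ast)\Leftrightarrow(\ast\ast)$ and the lemma follows. I expect the only genuinely delicate point to be the justification of the product rule and of the passage between the integrated and differential forms of \eqref{limiting_equation} at the low regularity $\partial_{t}u\in L^2(0,T;H)$; the algebra, by contrast, is essentially forced by the convenient factorization of the $\varphi$-dependent coefficients, and notably does not even require the constraint $\abs{u}_{H}=1$.
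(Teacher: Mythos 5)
Your proof is correct and follows essentially the same route as the paper: both arguments hinge on taking the pointwise $\mathbb{R}^3$ inner product with $u$ to extract the scalar identity $\gamma(u\cdot\partial_{t}u)=(\partial^2_x u\cdot u)+\abs{\partial_x u}_{H}^{2}\abs{u}^{2}$ from either equation (via the orthogonality $u\cdot\big(u\times(u\times\partial_{t}u)\big)=0$ in one direction and the strictly positive factor $1+\tfrac{3}{2\gamma}\varphi\abs{u}^{2}$ in the other), and then substituting back. Your packaging of the discrepancy between the two equations as the single term $\tfrac{3}{2}\varphi\big[(u\cdot\partial_{t}u)-\gamma^{-1}(b\cdot u)\big]u$ is a tidier presentation of the same computation that the paper carries out as two separate implications.
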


\begin{proof}

If $u$ satisfies equation \eqref{limiting_equation_alt}, then we have 
\begin{equation*}
	\begin{array}{ll}
		&\ds{\gamma\partial_{t}u(t)=\partial^2_x u(t)+\abs{\partial_x u(t)}_{H}^{2}u+\frac{1}{2}\varphi\big(-\abs{u(t)}^{2}\partial_{t}u(t)+(u(t)\cdot \partial_{t}u(t))u(t)\big) }\\
		\vs
		&\ds{\quad\quad\quad\quad = \partial^2_x u(t)+\abs{\partial_x u(t)}_{H}^{2}u(t)+\frac{1}{2}\varphi\Big(-\partial_{t}(\abs{u(t)}^{2}u(t))+3(u(t)\cdot \partial_{t}u(t))u(t)\Big) },
	\end{array}
\end{equation*}
the identity holding $\mathbb{P}$-a.s. in $L^2(0,T;H)$.
Then, since 
\[\gamma u(t)\cdot \partial_{t}u(t)=\partial^2_x u(t) \cdot u(t)+ \abs{\partial_x u(t)}_{H}^{2}\abs{u(t)}^{2},\] we have 
\begin{equation*}
	\gamma (u(t)\cdot \partial_{t}u(t))u(t)=(\partial^2_x u(t)\cdot u(t))u(t)+\abs{\partial_x u(t)}_{H}^{2}\abs{u(t)}^{2}u(t).
\end{equation*}
This implies that 
\begin{equation*}
	\begin{array}{l}
	\ds{\gamma\partial_{t}u(t)+\frac{1}{2}\varphi\partial_{t}(\abs{u(t)}^{2}u(t))}\\
	\vs 
	\ds{\quad \quad \quad =\partial^2_x u(t)+ \abs{\partial_x u(t)}_{H}^{2}u(t)+\frac{3\varphi}{2\gamma}(\partial^2_x u(t)\cdot u(t))u(t)+\frac{3\varphi}{2\gamma}\abs{\partial_x u(t)}_{H}^{2}\abs{u(t)}^{2}u(t).}	
	\end{array}
\end{equation*}

On the other hand, if $u$ is a solution of equation \eqref{limiting_equation}, in order to prove that it is also a solution to \eqref{limiting_equation_alt}  it suffices to show that 
\begin{equation*}
	\gamma u(t)\cdot \partial_{t}u(t)=\partial^2_x u(t)\cdot u(t)+\abs{\partial_x u(t)}_{H}^{2}\abs{u(t)}^{2}.
\end{equation*}
Indeed, note that 
\begin{equation*}
	\begin{array}{l}
	\ds{\gamma\partial_{t}u(t)+\frac{1}{2}\varphi\Big(2(u(t)\cdot \partial_{t}u(t))u(t)+\abs{u(t)}^{2}\partial_{t}u(t)\Big)}\\
	\vs 
	\ds{\quad \quad \quad \quad \quad \quad =\partial^2_x u(t)+ \abs{\partial_x u(t)}_{H}^{2}u(t)+\frac{3\varphi}{2\gamma}(\partial^2_x u(t)\cdot u(t))u(t)+\frac{3\varphi}{2\gamma}\abs{\partial_x u(t)}_{H}^{2}\abs{u(t)}^{2}u(t),}	
	\end{array}
\end{equation*}
so that if we take the scalar product by $u$ of both sides, we get 
\begin{equation*}
	\Big(1+\frac{3}{2\gamma}\varphi\abs{u(t)}^{2}\Big)\Big((\gamma u(t)\cdot \partial_{t}u(t))-(\partial^2_x u(t)\cdot u(t))-\abs{\partial_x u(t)}_{H}^{2}\abs{u(t)}^{2}\Big)=0,
\end{equation*}
which completes the proof.

\end{proof}

\begin{rem}
	By using the \eqref{limiting_equation_alt} formulation of equation \eqref{limiting_equation}, it is immediate to check that if $u_0\in  M$ then $u(t)\in M$, for every $t \in\,[0,T]$. Actually, for any $t>0$ we have 
	\begin{equation*}
		\begin{array}{ll}
			&\ds{ \frac{1}{2}\frac{d}{dt}\Big(\abs{u(t)}_{H}^{2}-1\Big)=\Inner{\partial^2_x u(t),u(t)}_{H}+\abs{\partial_x u(t)}_{H}^{2}\abs{u(t)}_{H}^{2}+\frac{1}{2}\Inner{\varphi(u(t)\times (u(t)\times \partial_{t}u(t))),u(t)}_{H}  }\\
			\vs
			&\ds{\quad\quad\quad\quad\quad\quad\quad \quad\quad\quad\quad =\abs{\partial_x u(t)}_{H}^{2}\Big(\abs{u(t)}_{H}^{2}-1\Big) }.
		\end{array}
	\end{equation*}
	Combined with the fact that  $\abs{u_0}_{H}-1=0$ and $\partial_x u \in\,C([0,T];H)$, this implies that $\abs{u(t)}_{H}=1$, for any $t\geq0$.
\end{rem}

\begin{lem}
	Let $u$ be a solution to equation \eqref{limiting_equation}, with  $u_0\in H^{1}\cap M$. Then for every $t\geq0$ we have
	\begin{equation}\label{lim_est}
		\abs{u(t)}_{H^{1}}^{2}+2\gamma\int_{0}^{t}\abs{\partial_{t}u(s)}_{H}^{2}\ ds\leq \abs{u_0}_{H^{1}}^{2}.
	\end{equation}
\end{lem}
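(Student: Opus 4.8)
The plan is to test the equivalent formulation \eqref{limiting_equation_alt} against $\partial_t u(t)$ in $H$ and read off a differential inequality for $\abs{u(t)}_{H^1}^2$. Taking the $H$-inner product of both sides of \eqref{limiting_equation_alt} with $\partial_t u(t)$, the left-hand side yields $\gamma\,\abs{\partial_t u(t)}_H^2$, and I would treat the three right-hand contributions separately. The term $\abs{\partial_x u(t)}_H^2\,u(t)$ produces $\abs{\partial_x u(t)}_H^2\,\langle u(t),\partial_t u(t)\rangle_H$, which vanishes: the preceding Remark gives $u(t)\in M$ for every $t$, so $\abs{u(t)}_H^2\equiv 1$ and hence $\langle u(t),\partial_t u(t)\rangle_H=\frac12\frac{d}{dt}\abs{u(t)}_H^2=0$. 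For the trace term, recalling from \eqref{sm1310} and \eqref{sm20} that $\text{tr}_K(u\times(u\times\partial_t u))=\varphi\,(-\abs{u}^2\partial_t u+(u\cdot\partial_t u)u)$, I obtain
\begin{equation*}
\frac12\,\langle \text{tr}_K(u\times(u\times\partial_t u)),\partial_t u\rangle_H=\frac12\int_0^L\varphi(x)\big((u\cdot\partial_t u)^2-\abs{u}^2\abs{\partial_t u}^2\big)\,dx\le 0,
\end{equation*}
by the pointwise Cauchy--Schwarz inequality together with $\varphi\ge 0$.

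It then remains to handle the leading term $\langle\partial_x^2 u(t),\partial_t u(t)\rangle_H$ and to identify it with $-\frac12\frac{d}{dt}\abs{u(t)}_{H^1}^2$. Formally this is integration by parts in $x$ combined with the chain rule in $t$, the boundary terms vanishing because $\partial_t u(t,0)=\partial_t u(t,L)=0$. To make this rigorous under the available regularity $u\in C([0,T];H^1)\cap L^2(0,T;H^2)$ with $\partial_t u\in L^2(0,T;H)$, I would invoke the standard chain-rule lemma for Gelfand triples (Lions--Magenes/Temam): setting $A=-\partial_x^2$ and $w:=A^{1/2}u$, one has $w\in L^2(0,T;H^1)$ and $\partial_t w=A^{1/2}\partial_t u\in L^2(0,T;H^{-1})$, so that $t\mapsto\abs{w(t)}_H^2=\abs{\partial_x u(t)}_H^2=\abs{u(t)}_{H^1}^2$ is absolutely continuous with
\begin{equation*}
\frac{d}{dt}\abs{u(t)}_{H^1}^2=2\,\langle A^{1/2}\partial_t u(t),A^{1/2}u(t)\rangle=2\,\langle\partial_t u(t),A u(t)\rangle_H=-2\,\langle\partial_x^2 u(t),\partial_t u(t)\rangle_H
\end{equation*}
for a.e. $t$. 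This is the only genuinely nontrivial point, since the low time-regularity of $\partial_t u$ prevents a naive differentiation of the energy; everything else is algebraic.

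Combining the four computations, for a.e. $t\in[0,T]$ one gets
\begin{equation*}
\gamma\,\abs{\partial_t u(t)}_H^2=-\frac12\frac{d}{dt}\abs{u(t)}_{H^1}^2+\frac12\int_0^L\varphi\big((u\cdot\partial_t u)^2-\abs{u}^2\abs{\partial_t u}^2\big)\,dx\le-\frac12\frac{d}{dt}\abs{u(t)}_{H^1}^2,
\end{equation*}
that is, $\frac{d}{dt}\abs{u(t)}_{H^1}^2+2\gamma\,\abs{\partial_t u(t)}_H^2\le 0$. Integrating over $[0,t]$ and using the absolute continuity established above yields exactly \eqref{lim_est}. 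I expect the main obstacle to be precisely the rigorous justification of the energy identity for the Laplacian term; the favourable sign of the noise-induced term comes for free from Cauchy--Schwarz and the nonnegativity of $\varphi$, and the constraint term drops out by orthogonality.
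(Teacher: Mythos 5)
Your proof is correct and follows essentially the same route as the paper: test the formulation \eqref{limiting_equation_alt} against $\partial_{t}u$, drop the $\abs{\partial_x u}_{H}^{2}u$ term via the constraint $\abs{u(t)}_{H}=1$, bound the trace term by pointwise Cauchy--Schwarz and $\varphi\geq0$, and integrate. The only difference is that you supply the Lions--Magenes chain-rule justification for $\langle\partial_x^2 u,\partial_t u\rangle_H=-\tfrac12\tfrac{d}{dt}\abs{u}_{H^1}^2$ under the stated regularity, which the paper uses without comment; this is a welcome but inessential refinement.
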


\begin{proof} If we use the \eqref{limiting_equation_alt} formulation of equation \eqref{limiting_equation}, we get
\begin{equation*}
	\begin{array}{l}
	\ds{\gamma \abs{\partial_{t}u(t)}_{H}^{2}=\Inner{\partial^2_x u(t), \partial_{t}u(t)}_{H}+\Inner{\abs{\partial_x u(t)}_{H}^{2}u(t),\partial_{t}u(t) }_{H}}\\
	\vs 
	\ds{\quad \quad \quad \quad \quad \quad \quad \quad-\frac{1}{2}\Inner{\varphi\abs{u(t)}^{2}\partial_{t}u(t),\partial_{t}u(t)}_{H}+\frac{1}{2}\Inner{\varphi(u(t)\cdot\partial_{t}u(t))u(t),\partial_{t}u(t) }_{H}.} 	
	\end{array}
\end{equation*}
	Recalling that $\vert u(t)\vert_H=1$, this gives 
		\[
		\begin{array}{l}
		\ds{	\gamma\,\abs{\partial_{t}u(t)}_{H}^{2} = -\frac{1}{2}\frac{d}{dt}\abs{u(t)}_{H^{1}}^{2}-\frac{1}{2}\Inner{\varphi\abs{u(t)}^{2}\partial_{t}u(t),\partial_{t}u(t)}_{H}+\frac{1}{2}\Inner{\varphi(u(t)\cdot\partial_{t}u(t))u(t),\partial_{t}u(t) }_{H}}\\
		\vs 
		\ds{\quad \quad \quad\quad \quad \quad\quad \quad \quad\quad \quad \quad\quad \quad \quad \leq -\frac{1}{2}\frac{d}{dt}\abs{u(t)}_{H^{1}}^{2},}
		\end{array}
\]
		and \eqref{lim_est} follows once we integrate both sides in time.

\end{proof}

\begin{Proposition}
	Let $u_{1}$ and $u_{2}$ be any two solutions of equation \eqref{limiting_equation}, with initial conditions $u_{1,0}, u_{2,0}\in H^{2}\cap M$, respectively. Then there exist some constants $c_{1},c_{2}>0$, depending only on $u_{1,0}$, $u_{2,0}$ and $\varphi$, such that for every $t\geq 0$
	\begin{equation}\label{comparison}
		\abs{u_{1}(t)-u_{2}(t)}_{H^{1}}^{2}+\int_{0}^{t}\abs{\partial_{t}u_{1}(s)-\partial_{t}u_{2}(s)}_{H}^{2}ds\leq c_{1}\abs{u_{1,0}-u_{2,0}}_{H^{1}}^{2}e^{c_{2}t}.
	\end{equation}
	In particular,  there is at most one solution to equation \eqref{limiting_equation} in $C([0,T];H^1)\cap L^2(0,T;H^2)$, with $\partial_t u \in\,L^2(0,T;H)$.
\end{Proposition}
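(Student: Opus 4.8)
The plan is to work with the equivalent formulation \eqref{limiting_equation_alt} and to derive an energy estimate for the difference $w:=u_1-u_2$. First I would subtract the two copies of \eqref{limiting_equation_alt}, using the identity $\text{tr}_{K}(u\times(u\times v))=\varphi\,(u\times(u\times v))$ from \eqref{sm1310}, to get
\[
\gamma\,\partial_t w-\frac12\varphi\bigl[u_1\times(u_1\times\partial_t u_1)-u_2\times(u_2\times\partial_t u_2)\bigr]=\partial^2_x w+\bigl[\abs{\partial_x u_1}_H^2 u_1-\abs{\partial_x u_2}_H^2 u_2\bigr].
\]
Then I would test this identity with $\partial_t w$ in $H$. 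On the right, $\Inner{\partial^2_x w,\partial_t w}_H=-\tfrac12\frac{d}{dt}\abs{w}_{H^1}^2$; this is legitimate since $w\in L^2(0,T;H^2)$ with $\partial_t w\in L^2(0,T;H)$, so by a standard Lions-type integration-by-parts lemma $t\mapsto\abs{w(t)}_{H^1}^2$ is absolutely continuous with the expected derivative. The term $\gamma\,\partial_t w$ yields $\gamma\abs{\partial_t w}_H^2$, which will provide the dissipation to absorb the error terms.

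The key algebraic point, which is where I expect the real content to lie, is the treatment of the conservative term. Writing
\[
u_1\times(u_1\times\partial_t u_1)-u_2\times(u_2\times\partial_t u_2)=u_1\times(u_1\times\partial_t w)+\bigl[u_1\times(u_1\times\partial_t u_2)-u_2\times(u_2\times\partial_t u_2)\bigr],
\]
the first piece is \emph{diagonal}: using $(h\times(h\times k))\cdot k=-\abs{h\times k}^2$, a consequence of \eqref{sm20}, its pairing with $\partial_t w$ produces $-\tfrac12\int_0^L\varphi\,\abs{u_1\times\partial_t w}^2\,dx\le 0$, a favorable term that I keep on the left-hand side. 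The second piece differs only in the first slot of the cross products; expanding it via \eqref{sm20} gives a pointwise bound by $c\,(\abs{u_1}+\abs{u_2})\,\abs{\partial_t u_2}\,\abs{w}$, so its pairing with $\partial_t w$ is controlled, after $\abs{\varphi}_\infty$ is factored out, by $c\,\abs{w}_\infty\,\abs{\partial_t u_2}_H\,\abs{\partial_t w}_H$. Using the one-dimensional embedding $H^1\hookrightarrow L^\infty$ and Young's inequality, this is absorbed into $\varepsilon\abs{\partial_t w}_H^2+c(\varepsilon)\abs{\partial_t u_2}_H^2\abs{w}_{H^1}^2$.

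The remaining term I split as $\abs{\partial_x u_1}_H^2 w+(\abs{\partial_x u_1}_H^2-\abs{\partial_x u_2}_H^2)u_2$; the scalar factor $\abs{\partial_x u_1}_H^2=\abs{u_1}_{H^1}^2$ is bounded uniformly in $t$ by the energy identity \eqref{lim_est} $(\abs{u_i(t)}_{H^1}\le\abs{u_{i,0}}_{H^1})$, while the scalar difference $\abs{\partial_x u_1}_H^2-\abs{\partial_x u_2}_H^2=\Inner{\partial_x w,\partial_x(u_1+u_2)}_H$ is bounded by $c\,\abs{w}_{H^1}$, so the corresponding pairing with $\partial_t w$ is again of the form $\varepsilon\abs{\partial_t w}_H^2+c(\varepsilon)\abs{w}_{H^1}^2$. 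Choosing $\varepsilon$ small enough to absorb all $\abs{\partial_t w}_H^2$ contributions into $\gamma\abs{\partial_t w}_H^2$ and discarding the nonnegative diagonal term, I arrive at
\[
\frac{d}{dt}\abs{w(t)}_{H^1}^2+\gamma\,\abs{\partial_t w(t)}_H^2\le c\bigl(1+\abs{\partial_t u_2(t)}_H^2\bigr)\abs{w(t)}_{H^1}^2.
\]
Since \eqref{lim_est} gives $\int_0^T\abs{\partial_t u_2(s)}_H^2\,ds<\infty$, Gronwall's inequality yields $\abs{w(t)}_{H^1}^2\le c_1\abs{u_{1,0}-u_{2,0}}_{H^1}^2 e^{c_2 t}$, and integrating the differential inequality once more controls $\int_0^t\abs{\partial_t w}_H^2$, giving exactly \eqref{comparison}; uniqueness in the stated class is then immediate. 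The main obstacle is precisely the conservative term carrying $\partial_t u_i$ on the left, which is a priori of the same order as the dissipation: everything hinges on recognizing that its diagonal part has the good sign $-\abs{u_1\times\partial_t w}^2$ and that its off-diagonal remainder is genuinely lower order in $w$.
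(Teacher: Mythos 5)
Your argument is correct and follows essentially the same route as the paper: test the difference of the two copies of \eqref{limiting_equation_alt} with $\partial_t(u_1-u_2)$, observe that the diagonal part of the conservative term has a favorable sign (the paper writes it, after expanding via \eqref{sm20}, as $-\langle\varphi|u_1|^2(v_1-v_2),v_1-v_2\rangle_H+\langle\varphi(u_1\cdot(v_1-v_2))u_1,v_1-v_2\rangle_H\le 0$, which is exactly your $-\tfrac12\int_0^L\varphi\,\abs{u_1\times\partial_t w}^2\,dx$), absorb the off-diagonal remainders with Young's inequality and the energy bound \eqref{lim_est}, and close with Gronwall. The only cosmetic difference is that the paper implements Gronwall through the integrating factor $Y_a(t)=\exp(-a\int_0^t(1+\abs{v_2(s)}_H^2)\,ds)$ rather than applying the lemma directly at the end.
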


\begin{proof}

Let us fix $u_{1,0},u_{2,0}\in H^{1}\cap M$ and let $u_{1},u_{2}$ be solutions of equation \eqref{limiting_equation} with initial conditions $u_{1,0},u_{2,0}$, respectively. If we denote $v_{1}=\partial_{t}u_{1}$ and $v_{2}=\partial_{t}u_{2}$, then, recalling \eqref{sm10} 
\begin{equation*}
	\begin{array}{l}
		\ds{\gamma \abs{(v_{1}-v_{2})(t)}_{H}^{2}= \Inner{\partial^2_x (u_{1}-u_{2})(t), (v_{1}-v_{2})(t) }_{H}}\\
		\vs 
		\ds{\quad \quad  \quad \quad +\Inner{\abs{\partial_x u_{1}(t)}_{H}^{2}u_{1}(t)-\abs{\partial_x u_{2}(t)}_{H}^{2}u_{2}(t),(v_{1}-v_{2})(t) }_{H} }\\
		\vs
		\ds{\quad \quad  \quad \quad \quad \quad  - \frac{1}{2}\Inner{\varphi\big(\abs{u_{1}(t)}^{2}v_{1}(t)-\abs{u_{2}(t)}^{2}v_{2}(t)\big),(v_{1}-v_{2})(t) }_{H}}\\
		\vs 
		\ds{\quad \quad \quad \quad \quad \quad \quad \quad +\frac{1}{2}\Inner{\varphi\big( (u_{1}(t)\cdot v_{1}(t))u_{1}(t)-(u_{2}(t)\cdot v_{2}(t))u_{2}(t) \big),(v_{1}-v_{2})(t) }_{H}.}
	\end{array}
\end{equation*}
Hence, if $a$ is an arbitrary positive constant and we denote
\begin{equation}
	Y_{a}(t):=\exp\left(-a\int_{0}^{t}\big(1+\abs{v_{2}(s)}_{H}^{2}\big)ds\right),\ \ \ \ t\geq 0,
\end{equation}
we get
\begin{equation}\label{sm117}
	\begin{array}{l}
		\ds{ \frac{d}{dt}\Big(Y_{a}(t)\abs{(u_{1}-u_{2})(t)}_{H^{1}}^{2}\Big)= Y_{a}(t)\Big(\frac{d}{dt}\abs{(u_{1}-u_{2})(t)}_{H^{1}}^{2}-a\big(1+\abs{v_{2}(t)}_{H}^{2}\big)\abs{(u_{1}-u_{2})(t)}_{H^{1}}^{2}\Big)  }\\
		\vs
		\ds{\quad=Y_{a}(t) \Big( -a\big(1+\abs{v_{2}(t)}_{H}^{2}\big)\abs{(u_{1}-u_{2})(t)}_{H^{1}}^{2}+2\Inner{\abs{u_{1}(t)}_{H^{1}}^{2}u_{1}(t)-\abs{u_{2}(t)}_{H^{1}}^{2}u_{2}(t),(v_{1}-v_{2})(t) }_{H}   }\\
		\vs
		\ds{\quad \quad \quad \quad -2\gamma \abs{(v_{1}-v_{2})(t)}_{H}^{2}-\Inner{\varphi\big(\abs{u_{1}(t)}^{2}v_{1}(t)-\abs{u_{2}(t)}^{2}v_{2}(t)\big),(v_{1}-v_{2})(t) }_{H} }\\
		\vs
		\ds{\quad \quad \quad \quad \quad \quad \quad \quad +\Inner{\varphi\big( (u_{1}(t)\cdot v_{1}(t))u_{1}(t)-(u_{2}(t)\cdot v_{2}(t))u_{2}(t) \big),(v_{1}-v_{2})(t) }_{H} \Big) }.
	\end{array}
\end{equation}
Now, for any $\epsilon>0$ we can find $c(\epsilon)>0$ such that
\begin{equation}\label{sm118}
	\begin{array}{l}
		\ds{\Inner{\abs{u_{1}(t)}_{H^{1}}^{2}u_{1}(t)-\abs{u_{2}(t)}_{H^{1}}^{2}u_{2}(t),(v_{1}-v_{2})(t) }_{H}    }\\
		\vs
		\ds{\quad\quad =\abs{u_{1}(t)}_{H^{1}}^{2}\Inner{(u_{1}-u_{2})(t),(v_{1}-v_{2})(t)}_{H}+\Inner{\big(\abs{u_{1}(t)}_{H^{1}}^{2}-\abs{u_{2}(t)}_{H^{1}}^{2}\big)u_{2}(t),(v_{1}-v_{2})(t) }_{H} }\\
		\vs
		\ds{\quad\quad \leq \abs{u_{1}(t)}_{H^{1}}^{2}\Big(\epsilon\abs{(v_{1}-v_{2})(t)}_{H}^{2}+c(\epsilon)\abs{(u_{1}-u_{2})(t)}_{H}^{2}\Big)}\\
		\vs
		\ds{\quad \quad \quad \quad +c\abs{u_{2}(t)}_{H}\big(\abs{u_{1}(t)}_{H^{1}}+\abs{u_{2}(t)}_{H^{1}}\big)\Big(\epsilon\abs{(v_{1}-v_{2})(t)}_{H}^{2}+c(\epsilon)\abs{(u_{1}-u_{2})(t)}_{H^{1}}^{2}\Big) }.
	\end{array}
\end{equation}
Moreover
\begin{equation}\label{sm119}
	\begin{array}{l}
		\ds{-\Inner{\varphi\big(\abs{u_{1}(t)}^{2}v_{1}(t)-\abs{u_{2}(t)}^{2}v_{2}(t)\big),(v_{1}-v_{2})(t) }_{H}  }\\
		\vs
		\ds{\quad\quad =-\Inner{\varphi\abs{u_{1}(t)}^{2}(v_{1}-v_{2})(t),(v_{1}-v_{2})(t) }_{H}-\Inner{\varphi\big(\abs{u_{1}(t)}^{2}-\abs{u_{2}(t)}^{2}\big)v_{2}(t),(v_{1}-v_{2})(t) }_{H}  }\\
		\vs
		\ds{\quad\quad \leq -\Inner{\varphi\abs{u_{1}(t)}^{2}(v_{1}-v_{2})(t),(v_{1}-v_{2})(t) }_{H}}\\
		\vs
		\ds{\quad \quad \quad \quad \quad +\vert \varphi\vert_{\infty}\big(\abs{u_{1}(t)}_{H^{1}}+\abs{u_{2}(t)}_{H^{1}}\big)\abs{(u_{1}-u_{2})(t)}_{H^{1}}\abs{v_{2}(t)}_{H}\abs{(v_{1}-v_{2})(t)}_{H} }\\
		\vs
		\ds{\quad\quad \leq -\Inner{\varphi\abs{u_{1}(t)}^{2}(v_{1}-v_{2})(t),(v_{1}-v_{2})(t) }_{H}}\\
		\vs
		\ds{\quad\quad\quad\quad\quad+c\vert \varphi\vert_{\infty}\big(\abs{u_{1}(t)}_{H^{1}}+\abs{u_{2}(t)}_{H^{1}}\big)\Big(\epsilon\abs{(v_{1}-v_{2})(t)}_{H}^{2}+c(\epsilon)\abs{v_{2}(t)}_{H}^{2}\abs{(u_{1}-u_{2})(t)}_{H^{1}}^{2}\Big)  },
	\end{array}
\end{equation}
and
\begin{equation}\label{sm120}
	\begin{array}{l}
		\ds{\Inner{\varphi\big( (u_{1}(t)\cdot v_{1}(t))u_{1}(t)-(u_{2}(t)\cdot v_{2}(t))u_{2}(t) \big),(v_{1}-v_{2})(t) }_{H}  }\\
		\vs
		\ds{\quad\quad = \Inner{\varphi\big(u_{1}(t)\cdot (v_{1}-v_{2})(t)\big)u_{1}(t),(v_{1}-v_{2})(t) }_{H}}\\
		\vs
		\ds{\quad +\Inner{\varphi\big((u_{1}-u_{2})(t)\cdot v_{2}(t)\big)u_{1}(t),(v_{1}-v_{2} )(t)}_{H} + \Inner{\varphi(u_{2}(t)\cdot v_{2}(t))(u_{1}-u_{2})(t),(v_{1}-v_{2})(t) }_{H} }\\
		\vs
		\ds{\quad\quad \leq \Inner{\varphi\big(u_{1}(t)\cdot (v_{1}-v_{2})(t)\big)u_{1}(t),(v_{1}-v_{2})(t) }_{H}}\\
		\vs 
		\ds{\quad\quad \quad\quad  +c\vert\varphi\vert_{\infty}\big(\abs{u_{1}(t)}_{H^{1}}+\abs{u_{2}(t)}_{H^{1}}\big)\Big(\epsilon\abs{(v_{1}-v_{2})(t)}_{H}^{2}+c(\epsilon)\abs{v_{2}(t)}_{H}^{2}\abs{(u_{1}-u_{2})(t)}_{H^{1}}^{2}\Big) }.
	\end{array}
\end{equation}
Since $\varphi\geq 0$, we have 
\begin{equation*}
	-\Inner{\varphi\abs{u_{1}}^{2}(v_{1}-v_{2}),v_{1}-v_{2}}_{H}+\Inner{\varphi\big(u_{1}\cdot (v_{1}-v_{2})\big)u_{1},v_{1}-v_{2} }_{H}\leq 0.
\end{equation*}
Hence, thanks to \eqref{lim_est} we can take $\bar{\epsilon}>0$ sufficiently small and $\bar{a}=\bar{a}(\bar{\epsilon})>0$ sufficiently large  so that if we replace \eqref{sm118}, \eqref{sm119} and \eqref{sm120} into \eqref{sm117}, we get
\begin{equation}
	\begin{array}{ll}
		&\ds{ Y_{\bar{a}}(t)\abs{u_{1}(t)-u_{2}(t)}_{H^{1}}^{2}+c\int_{0}^{t}Y_{\bar{a}}(s)\big(1+\abs{v_{2}(s)}_{H}^{2}\big)\abs{u_{1}(s)-u_{2}(s)}_{H^{1}}^{2}ds  }\\
		\vs
		&\ds{\quad\quad\quad\quad\quad\quad\quad\quad\quad  +c\int_{0}^{t}Y_{\bar{a}}(s)\abs{v_{1}(s)-v_{2}(s)}_{H}^{2}ds \leq \abs{u_{1,0}-u_{2,0}}_{H^{1}}^{2}  },
	\end{array}
\end{equation}
for some constant $c=c(u_{1,0},u_{2,0},\varphi)>0$. Finally, since from \eqref{lim_est}
\begin{equation*}
	Y_{\bar{a}}(t)\geq \exp\Big(-\bar{a}\Big(t+\abs{u_{2,0}}_{H^{1}}^{2}/2\gamma\Big)\Big), \ \ \ \ t> 0,
\end{equation*}
we can complete the proof of \eqref{comparison}.

\end{proof}

\section{Proof of the validity
 of the small-mass limit}
 \label{sec7}
 
In this final section, we conclude the proof of Theorem \ref{limite}. We first prove some identities, then we investigate tightness and finally we proceed with the proof of the theorem.

\subsection{An identity for the solution of system  \eqref{SPDE1}}

\begin{Lemma} For every $\mu >0$ and $(u_0,v_0) \in\,\mathcal{H}_1\cap \mathcal{M}$ the solution $(u_\mu(t),v_\mu(t))$ of system \eqref{SPDE1} (or, equivalently system \eqref{SPDE1-bis}) satisfies the following identity, for every $t\geq 0$
\begin{equation}\label{sm1200}
	\begin{array}{l}
		\ds{ \gamma u_{\mu}(t)+\frac{1}{2}\varphi\abs{u_{\mu}(t)}^{2}u_{\mu}(t)+\mu v_{\mu}(t)}\\
		\vs 
		\ds{\quad  = \gamma u_0+\frac{1}{2}\varphi\abs{u_0}^{2}u_0+\mu v_0+\int_{0}^{t}\partial^2_x u_{\mu}(s)ds+\int_{0}^{t}\abs{\partial_x u_{\mu}(s)}_{H}^{2}u_{\mu}(s)ds  }\\
		\vs
		\ds{\quad\quad +\frac{3}{2\gamma}\varphi\int_{0}^{t}\big(\partial^2_x u_{\mu}(s)\cdot u_{\mu}(s)\big)u_{\mu}(s)ds+\frac{3}{2\gamma}\varphi\int_{0}^{t}\abs{\partial_x u_{\mu}(s)}_{H}^{2}\abs{u_{\mu}(s)}^{2}u_{\mu}(s)ds+R_{\mu}(t)   },
	\end{array}
\end{equation}
where 
\begin{equation}\label{sm123}
	\begin{array}{l}
		\ds{ R_{\mu}(t) = \frac{3\mu}{2\gamma}\varphi(u_0\cdot v_0)u_0-\frac{3\mu}{2\gamma}\varphi(u_{\mu}(t)\cdot v_{\mu}(t))u_{\mu}(t)-\mu\int_{0}^{t}\abs{v_{\mu}(s)}_{H}^{2}u_{\mu}(s)ds  }\\
		\vs
		\ds{\quad\quad\quad\quad +\frac{3\mu}{2\gamma}\varphi\int_{0}^{t}(u_{\mu}(s)\cdot v_{\mu}(t))v_{\mu}(s)ds + \frac{3\mu}{2\gamma}\varphi\int_{0}^{t}\abs{v_{\mu}(s)}^{2}u_{\mu}(s)ds  }\\
		\vs
		\ds{\quad\quad\quad\quad \quad \quad- \frac{3\mu}{2\gamma}\varphi\int_{0}^{t}\abs{v_{\mu}(s)}_{H}^{2}\abs{u_{\mu}(s)}^{2}u_{\mu}(s)ds+\sqrt{\mu}\int_{0}^{t}(u_{\mu}(s)\times v_{\mu}(s))dw(s)       }\\
		\vs
		\ds{\quad\quad \quad\quad\quad\quad \quad \quad\quad\quad\quad\quad =: \frac{3\mu}{2\gamma}\varphi(u_0\cdot v_0)u_0+\sum_{i=1}^{6}J_{\mu,i}(t) }.
	\end{array}
\end{equation}	
\end{Lemma}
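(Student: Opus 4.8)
The plan is to obtain \eqref{sm1200} by computing the differential of the $H$-valued process $\gamma u_{\mu}(t)+\tfrac12\varphi\abs{u_{\mu}(t)}^{2}u_{\mu}(t)+\mu v_{\mu}(t)$ and integrating in time. I start from the It\^o form \eqref{SPDE1-bis} of the system, in which, by Lemma \ref{lem-sm1} together with the vector identity \eqref{sm20}, the Stratonovich correction is $\text{tr}_{K}(u_{\mu}\times(u_{\mu}\times v_{\mu}))=\varphi\big(-\abs{u_{\mu}}^{2}v_{\mu}+(u_{\mu}\cdot v_{\mu})u_{\mu}\big)$. Multiplying the equation for $v_{\mu}$ by $\mu$ isolates $\mu\,dv_{\mu}$ as an $H$-valued semimartingale whose only martingale part is $\sqrt{\mu}\,(u_{\mu}\times v_{\mu})\,dw$. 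All the manipulations below are carried out pointwise in $x\in(0,L)$ and then read as identities in $H$; throughout I use that, by Theorem \ref{well_posedness_system}, $u_{\mu}$ has $M$-valued trajectories of class $C^{1}$ (hence of bounded variation in $t$, so it contributes no quadratic variation and $du_{\mu}=v_{\mu}\,dt$), and that by Lemma \ref{lemma3.1} one has $u_{\mu}\in H^{2}$, $v_{\mu}\in H^{1}$ almost surely.

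First I reconstruct the cubic term on the left-hand side of \eqref{sm1200}. Writing $\gamma v_{\mu}\,dt=\gamma\,du_{\mu}$ and invoking the pointwise identity $d(\abs{u_{\mu}}^{2}u_{\mu})=2(u_{\mu}\cdot v_{\mu})u_{\mu}\,dt+\abs{u_{\mu}}^{2}v_{\mu}\,dt$, the term $-\tfrac12\varphi\abs{u_{\mu}}^{2}v_{\mu}\,dt$ coming from the correction can be absorbed into $\tfrac12\varphi\,d(\abs{u_{\mu}}^{2}u_{\mu})$, at the cost of a leftover multiple of $\varphi(u_{\mu}\cdot v_{\mu})u_{\mu}\,dt$. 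Collecting all contributions yields the preliminary differential
\[
d\Big[\gamma u_{\mu}+\tfrac{1}{2}\varphi\abs{u_{\mu}}^{2}u_{\mu}+\mu v_{\mu}\Big]=\big(\partial^2_x u_{\mu}+\abs{\partial_x u_{\mu}}_{H}^{2}u_{\mu}\big)\,dt-\mu\abs{v_{\mu}}_{H}^{2}u_{\mu}\,dt+\tfrac{3}{2}\varphi(u_{\mu}\cdot v_{\mu})u_{\mu}\,dt+\sqrt{\mu}\,(u_{\mu}\times v_{\mu})\,dw.
\]
This already produces the left-hand side of \eqref{sm1200}, the two linear terms $\int_0^t\partial^2_x u_{\mu}\,ds$ and $\int_0^t\abs{\partial_x u_{\mu}}_{H}^{2}u_{\mu}\,ds$, the martingale $J_{\mu,6}$, and the term $J_{\mu,2}$; it remains to rewrite $\tfrac{3}{2}\varphi(u_{\mu}\cdot v_{\mu})u_{\mu}\,dt$.

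The crux, and the step I expect to be the most delicate, is to eliminate the scalar factor $u_{\mu}\cdot v_{\mu}$. Taking the pointwise scalar product of the $\mu\,dv_{\mu}$ equation with $u_{\mu}$ triggers two cancellations: the scalar triple product gives $u_{\mu}\cdot(u_{\mu}\times v_{\mu})=0$, which kills the martingale contribution, and the correction terms cancel because $-\abs{u_{\mu}}^{2}(u_{\mu}\cdot v_{\mu})+(u_{\mu}\cdot v_{\mu})\abs{u_{\mu}}^{2}=0$ (the same algebraic structure already exploited in deriving the energy identity \eqref{sm10-bis}). This leaves the scalar relation
\[
\gamma(u_{\mu}\cdot v_{\mu})\,dt=\big[(\partial^2_x u_{\mu}\cdot u_{\mu})+\abs{\partial_x u_{\mu}}_{H}^{2}\abs{u_{\mu}}^{2}-\mu\abs{v_{\mu}}_{H}^{2}\abs{u_{\mu}}^{2}\big]\,dt-\mu\,(u_{\mu}\cdot dv_{\mu}).
\]
I then substitute $u_{\mu}\cdot dv_{\mu}=d(u_{\mu}\cdot v_{\mu})-\abs{v_{\mu}}^{2}\,dt$ (product rule, using $du_{\mu}=v_{\mu}\,dt$), multiply through by $\tfrac{3}{2\gamma}\varphi\,u_{\mu}$, and integrate by parts the resulting term $\tfrac{3\mu}{2\gamma}\varphi\,u_{\mu}\,d(u_{\mu}\cdot v_{\mu})$ against
\[
d\Big[\tfrac{3\mu}{2\gamma}\varphi(u_{\mu}\cdot v_{\mu})u_{\mu}\Big]=\tfrac{3\mu}{2\gamma}\varphi\,u_{\mu}\,d(u_{\mu}\cdot v_{\mu})+\tfrac{3\mu}{2\gamma}\varphi(u_{\mu}\cdot v_{\mu})v_{\mu}\,dt,
\]
which transfers a boundary term into the remainder. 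Inserting the resulting expression for $\tfrac{3}{2}\varphi(u_{\mu}\cdot v_{\mu})u_{\mu}\,dt$ into the preliminary differential and integrating from $0$ to $t$ produces precisely the two $\tfrac{3\varphi}{2\gamma}$ terms of \eqref{sm1200}, together with the boundary contribution $\tfrac{3\mu}{2\gamma}\varphi(u_0\cdot v_0)u_0$ and $J_{\mu,1}$, and the integral remainders $J_{\mu,3},J_{\mu,4},J_{\mu,5}$, matching \eqref{sm123} exactly.

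The remaining task is to make these formal It\^o and product-rule computations rigorous in the mild-solution framework, which I regard as the main obstacle. Concretely, I would apply It\^o's formula to the scalar semimartingales $x\mapsto u_{\mu}(t,x)\cdot v_{\mu}(t,x)$ and $x\mapsto\abs{u_{\mu}(t,x)}^{2}$ exactly as in Lemma \ref{lem-Ito}, and verify that each nonlinear term defines an $H$-valued process integrable in time: the bounds of Lemma \ref{lemma3.1} give $u_{\mu}\in L^{2}(0,T;H^{2})$ and $v_{\mu}\in L^{2}(0,T;H^{1})$, and the embedding $H^{1}\hookrightarrow L^{\infty}(0,L)$ controls the pointwise products $\varphi\abs{u_{\mu}}^{2}u_{\mu}$ and $(\partial^2_x u_{\mu}\cdot u_{\mu})u_{\mu}$, so that $\partial^2_x u_{\mu}$ and all cubic terms lie in $L^{1}(0,T;H)$ and $\sqrt{\mu}\int_0^t(u_{\mu}\times v_{\mu})\,dw$ is a well-defined $H$-valued martingale. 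Once these regularity and progressive-measurability points are settled, the identity \eqref{sm1200} holds in $H$ for every $t\geq 0$, $\mathbb{P}$-almost surely.
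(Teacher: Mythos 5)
Your proposal is correct and follows essentially the same route as the paper: both arguments rest on the vector identity \eqref{sm20} for the Stratonovich correction, the cancellations $u_\mu\cdot(u_\mu\times v_\mu)=0$ and $-\abs{u_\mu}^2(u_\mu\cdot v_\mu)+(u_\mu\cdot v_\mu)\abs{u_\mu}^2=0$, the identity $d(\abs{u_\mu}^2u_\mu)=(2(u_\mu\cdot v_\mu)u_\mu+\abs{u_\mu}^2v_\mu)\,dt$, and the product rule for $(u_\mu\cdot v_\mu)u_\mu$ to trade $\gamma(u_\mu\cdot v_\mu)u_\mu$ for the elliptic terms plus $O(\mu)$ remainders. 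The only difference is organizational (you assemble the differential of the full left-hand side before substituting, whereas the paper substitutes \eqref{sm122} into the integrated equation \eqref{sm121}), and your derivation in fact confirms that the factor $v_\mu(t)$ inside $J_{\mu,4}$ in \eqref{sm123} should read $v_\mu(s)$.
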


\begin{proof}  In view of  \eqref{SPDE1-bis}, we have 
\begin{equation*}
	\begin{array}{l}
		\ds{d(u_{\mu}(t)\cdot v_{\mu}(t))=\abs{v_{\mu}(t)}^{2}dt+\frac{1}{\mu}\big(\partial^2_x u_{\mu}(t)\cdot u_{\mu}(t)\big)dt+\frac{1}{\mu}\abs{\partial_x u_{\mu}(t)}_{H}^{2}\abs{u_{\mu}(t)}^{2}dt}\\
		\vs
		\ds{-\abs{v_{\mu}(t)}_{H}^{2}\abs{u_{\mu}(t)}^{2}dt -\frac{\gamma}{\mu}(u_{\mu}(t)\cdot v_{\mu}(t))dt   +\frac{1}{2\mu}\varphi\Big(-\abs{u_{\mu}(t)}^{2}v_{\mu}(t)+(u_{\mu}(t)\cdot v_{\mu}(t))u_{\mu}(t)\Big)\cdot u_{\mu}(t)dt }\\
		\vs 
		\ds{\quad \quad \quad \quad \quad \quad+\frac{1}{\sqrt{\mu}}u_{\mu}(t)\cdot (u_{\mu}(t)\times v_{\mu}(t))dw(t) =\abs{v_{\mu}(t)}^{2}dt+\frac{1}{\mu}\big(\partial^2_x u_{\mu}(t)\cdot u_{\mu}(t)\big)dt}\\
		\vs
		\ds{\quad\quad \quad\quad \quad\quad \quad\quad \quad\quad +\frac{1}{\mu}\abs{\partial_x u_{\mu}(t)}_{H}^{2}\abs{u_{\mu}(t)}^{2}dt-\abs{v_{\mu}(t)}_{H}^{2}\abs{u_{\mu}(t)}^{2}dt-\frac{\gamma}{\mu}(u_{\mu}(t)\cdot v_{\mu}(t))dt  }.
	\end{array}
\end{equation*}
This implies that 
\begin{equation*}
	\begin{array}{l}
		\ds{d\Big((u_{\mu}(t)\cdot v_{\mu}(t))u_{\mu}(t)\Big)}\\
		\vs 
		\ds{\quad =(u_{\mu}(t)\cdot v_{\mu}(t))v_{\mu}(t)dt+\abs{v_{\mu}(t)}^{2}u_{\mu}(t)dt+\frac{1}{\mu}\big(\partial^2_x u_{\mu}(t)\cdot u_{\mu}(t)\big)u_{\mu}(t)dt   }\\
		\vs
		\ds{\quad\quad\quad\quad +\frac{1}{\mu}\abs{\partial_x u_{\mu}(t)}_{H}^{2}\abs{u_{\mu}(t)}^{2}u_{\mu}(t)dt-\abs{v_{\mu}(t)}_{H}^{2}\abs{u_{\mu}(t)}^{2}u_{\mu}(t)dt-\frac{\gamma}{\mu}(u_{\mu}(t)\cdot v_{\mu}(t))u_{\mu}(t)dt }.
	\end{array}
\end{equation*}
Hence, for every $t\geq 0$, we obtain
\begin{equation}
	\begin{array}{l}
		\ds{\gamma \int_{0}^{t}(u_{\mu}(s)\cdot v_{\mu}(s))u_{\mu}(s)ds = \int_{0}^{t}\big(\partial^2_x u_{\mu}(s)\cdot u_{\mu}(s)\big)u_{\mu}(s)ds +\int_{0}^{t}\abs{\partial_x u_{\mu}(s)}_{H}^{2}\abs{u_{\mu}(s)}^{2}u_{\mu}(s)ds  }\\
		\vs
		\ds{\quad-\mu(u_{\mu}(t)\cdot v_{\mu}(t))u_{\mu}(t)+\mu(u_0\cdot v_0)u_0+\mu \int_{0}^{t}(u_{\mu}(s)\cdot v_{\mu}(s))v_{\mu}(s)ds+\mu\int_{0}^{t}\abs{v_{\mu}(s)}^{2}u_{\mu}(s)ds   }\\
		\vs
		\ds{\quad\quad \quad\quad \quad\quad \quad\quad -\mu\int_{0}^{t}\abs{v_{\mu}(s)}_{H}^{2}\abs{u_{\mu}(s)}^{2}u_{\mu}(s)ds }.
	\end{array}
\end{equation}
By using the fact that 
\[d(\abs{u(t)}^{2}u(t))=2(u(t)\cdot v(t))u(t)+\abs{u(t)}^{2}v(t),\] this implies
\begin{equation}\label{sm122}
	\begin{array}{l}
		\ds{ \int_{0}^{t}\Big(-\abs{u_{\mu}(s)}^{2}v_{\mu}(s)+(u_{\mu}(s)\cdot v_{\mu}(s))u_{\mu}(s)\Big)ds }\\
		\vs
		\ds{= - \abs{u_{\mu}(t)}^{2}u_{\mu}(t)+\abs{u_0}^{2}u_0+3\int_{0}^{t}(u_{\mu}(s)\cdot v_{\mu}(s))u_{\mu}(s)ds  =-\abs{u_{\mu}(t)}^{2}u_{\mu}(t)+\abs{u_0}^{2}u_{0} }\\
		\vs
		\ds{\quad\quad +\frac{3}{\gamma}\int_{0}^{t}\big(\partial^2_x u_{\mu}(s)\cdot u_{\mu}(s)\big)u_{\mu}(s)ds +\frac{3}{\gamma}\int_{0}^{t}\abs{\partial_x u_{\mu}(s)}_{H}^{2}\abs{u_{\mu}(s)}^{2}u_{\mu}(s)ds   }\\
		\vs
		\ds{\quad\quad \quad -\frac{3\mu}{\gamma}(u_{\mu}(t)\cdot v_{\mu}(t))u_{\mu}(t)+\frac{3\mu}{\gamma}(u_0\cdot v_0)u_0+\frac{3\mu}{\gamma} \int_{0}^{t}(u_{\mu}(s)\cdot v_{\mu}(s))v_{\mu}(s)ds }\\
		\vs
		\ds{\quad\quad\quad\quad \quad\quad\quad+\frac{3\mu}{\gamma}\int_{0}^{t}\abs{v_{\mu}(s)}^{2}u_{\mu}(s)ds-\frac{3\mu}{\gamma}\int_{0}^{t}\abs{v_{\mu}(s)}_{H}^{2}\abs{u_{\mu}(s)}^{2}u_{\mu}(s)ds }.
	\end{array}
\end{equation}
Finally, we rewrite system \eqref{SPDE1-bis} in the following form
\begin{equation}\label{sm121}
	\begin{array}{l}
		\ds{ \gamma u_{\mu}(t)+\mu v_{\mu}(t)=\gamma u_0+\mu v_0 +\int_{0}^{t}\partial^2_x u_{\mu}(s)ds + \int_{0}^{t}\abs{\partial_x u_{\mu}(s)}_{H}^{2}u_{\mu}(s)ds-\mu \int_{0}^{t}\abs{v_{\mu}(s)}_{H}^{2}u_{\mu}(s)ds  }\\
		\vs
		\ds{\quad\quad\quad +\frac{1}{2}\,\varphi\int_{0}^{t}\Big(-\abs{u_{\mu}(s)}^{2}v_{\mu}(s)+(u_{\mu}(s)\cdot v_{\mu}(s))u_{\mu}(s)\Big)ds +\sqrt{\mu}\int_{0}^{t}(u_{\mu}(s)\times v_{\mu}(s))dw(s) }.
	\end{array}
\end{equation}
Therefore, if we replace \eqref{sm122} into \eqref{sm121}, we 
get \eqref{sm1200}, with $R_\mu(t)$ defined by \eqref{sm123}.
\end{proof}

\begin{lem}\label{reminder}
	For every $(u_0,v_0)\in \H_{1}\cap \M$ and $t>0$ we have 
	\begin{equation}
		\lim_{\mu\to0}\E\sup_{r\in[0,t]}\abs{R_{\mu}(r)}_{H}^{2}=0.
	\end{equation}
\end{lem}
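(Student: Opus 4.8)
The plan is to show that each of the seven summands in the definition \eqref{sm123} of $R_{\mu}$ converges to zero in $L^{2}(\Omega;C([0,t];H))$ as $\mu\downarrow0$, and then to conclude by the triangle inequality. The only ingredients needed are the energy identity \eqref{uniform_est1}, which yields the $\P$-a.s. bounds, uniform in $\mu\in(0,1)$,
\[
\sup_{r\geq0}\abs{u_{\mu}(r)}_{H^{1}}^{2}\leq c,\qquad\sup_{r\geq0}\mu\abs{v_{\mu}(r)}_{H}^{2}\leq c,\qquad\int_{0}^{t}\abs{v_{\mu}(s)}_{H}^{2}\,ds\leq c,
\]
the higher-order estimate \eqref{uniform_est2} of Lemma \ref{lemma3.1}, the Sobolev embedding $H^{1}\hookrightarrow L^{\infty}$, and, for the martingale term, the Burkholder--Davis--Gundy inequality together with the Hilbert--Schmidt bound $\norm{u\times v}_{\mathcal{T}_{2}(K,H)}^{2}\leq c\,\abs{\varphi}_{\infty}\abs{u}_{H^{1}}^{2}\abs{v}_{H}^{2}$ already established in the proof of Lemma \ref{lem-sm1}.

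First I would dispose of the four terms carrying an explicit factor $\mu$ whose $H$-norm is controlled pathwise, namely the constant term $\tfrac{3\mu}{2\gamma}\varphi(u_{0}\cdot v_{0})u_{0}$ and the summands $J_{\mu,1}$, $J_{\mu,2}$, $J_{\mu,5}$. The constant term is bounded by $c\mu$ since $\varphi,u_{0},v_{0}\in L^{\infty}$. For $J_{\mu,1}$ the pointwise inequality $\abs{(u_{\mu}\cdot v_{\mu})u_{\mu}}\leq\abs{u_{\mu}}_{\infty}^{2}\abs{v_{\mu}}$ together with $\abs{u_{\mu}(r)}_{\infty}^{2}\leq c\abs{u_{\mu}(r)}_{H^{1}}^{2}\leq c$ gives $\abs{J_{\mu,1}(r)}_{H}^{2}\leq c\mu^{2}\abs{u_{\mu}(r)}_{H^{1}}^{4}\abs{v_{\mu}(r)}_{H}^{2}\leq c\mu\,(\mu\abs{v_{\mu}(r)}_{H}^{2})\leq c\mu$. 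For $J_{\mu,2}$ (using $\abs{u_{\mu}(s)}_{H}=1$) and for $J_{\mu,5}$ (using $\abs{\abs{u_{\mu}}^{2}u_{\mu}}_{H}\leq\abs{u_{\mu}}_{\infty}^{2}\leq c$) the $H$-norm of the integrand is at most $c\abs{v_{\mu}(s)}_{H}^{2}$, so each contribution is bounded by $c\mu\int_{0}^{t}\abs{v_{\mu}(s)}_{H}^{2}\,ds\leq c\mu$. In every case $\E\sup_{r\in[0,t]}\abs{\,\cdot\,}_{H}^{2}=O(\mu)$ and hence vanishes in the limit.

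The genuinely delicate terms are $J_{\mu,3}$ and $J_{\mu,4}$, where the integrand carries a factor growing like $1/\mu$ in the energy and only the prefactor $\mu$ restores convergence. In both cases the pointwise bound reduces the $H$-norm of the integrand to $\abs{u_{\mu}(s)}_{\infty}^{2}\abs{v_{\mu}(s)}_{L^{4}}^{4}$, and the one-dimensional estimate $\abs{v_{\mu}}_{L^{4}}^{4}\leq\abs{v_{\mu}}_{\infty}^{2}\abs{v_{\mu}}_{H}^{2}\leq c\,\abs{v_{\mu}}_{H^{1}}^{2}\abs{v_{\mu}}_{H}^{2}$ turns this into $c\,\abs{v_{\mu}(s)}_{H^{1}}^{2}\abs{v_{\mu}(s)}_{H}^{2}$. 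Applying the Cauchy--Schwarz inequality in time and then taking expectations, I would obtain, for $i=3,4$,
\[
\E\sup_{r\in[0,t]}\abs{J_{\mu,i}(r)}_{H}^{2}\leq c\mu^{2}t\,\E\int_{0}^{t}\abs{v_{\mu}(s)}_{H^{1}}^{2}\abs{v_{\mu}(s)}_{H}^{2}\,ds=c\mu t\,\E\int_{0}^{t}\mu\abs{v_{\mu}(s)}_{H^{1}}^{2}\abs{v_{\mu}(s)}_{H}^{2}\,ds,
\]
and the integral on the right is precisely one of the mixed-norm quantities controlled uniformly in $\mu$ by \eqref{uniform_est2}; hence these contributions are again $O(\mu)$. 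This is where I expect the main difficulty to lie: the estimate only closes because the integrand matches exactly the term $\mu\abs{v_{\mu}}_{H^{1}}^{2}\abs{v_{\mu}}_{H}^{2}$ appearing in Lemma \ref{lemma3.1}, whereas any cruder splitting would leave an uncompensated power of $1/\mu$.

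Finally, for the stochastic term $J_{\mu,6}$ I would apply the Burkholder--Davis--Gundy inequality to get $\E\sup_{r\in[0,t]}\abs{J_{\mu,6}(r)}_{H}^{2}\leq c\mu\,\E\int_{0}^{t}\norm{u_{\mu}(s)\times v_{\mu}(s)}_{\mathcal{T}_{2}(K,H)}^{2}\,ds$, bound the Hilbert--Schmidt norm by $c\abs{\varphi}_{\infty}\abs{u_{\mu}(s)}_{H^{1}}^{2}\abs{v_{\mu}(s)}_{H}^{2}\leq c\abs{v_{\mu}(s)}_{H}^{2}$, and close using $\E\int_{0}^{t}\abs{v_{\mu}(s)}_{H}^{2}\,ds\leq c$ from \eqref{uniform_est1}, again of order $\mu$. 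Summing the seven estimates yields $\E\sup_{r\in[0,t]}\abs{R_{\mu}(r)}_{H}^{2}\leq c\,\mu\,(1+t)\to0$, which is the claim.
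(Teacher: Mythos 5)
Your proof is correct and follows essentially the same term-by-term strategy as the paper: each summand of $R_{\mu}$ is estimated using the energy identity \eqref{uniform_est1}, the higher-order bounds of Lemma \ref{lemma3.1}, the embedding $H^{1}\hookrightarrow L^{\infty}$, and Burkholder--Davis--Gundy for the martingale part, with the critical terms $J_{\mu,3}$ and $J_{\mu,4}$ closed exactly as in the paper via the mixed quantity $\mu\abs{v_{\mu}}_{H^{1}}^{2}\abs{v_{\mu}}_{H}^{2}$ controlled by \eqref{uniform_est2}. The only (harmless) deviation is that you bound $J_{\mu,2}$ and $J_{\mu,5}$ pathwise using $\abs{u_{\mu}}_{H}=1$, the embedding, and \eqref{uniform_est1} alone, whereas the paper routes them through Cauchy--Schwarz in time and the mixed bounds of Lemma \ref{lemma3.1}; both arguments are valid.
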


\begin{proof}
First, note that, since
\begin{equation*}
	\abs{J_{\mu,1}(t)}_{H}\leq c\,\vert \varphi\vert_{\infty} \mu\abs{u_{\mu}(t)}_{H^{1}}^{2}\abs{v_{\mu}(t)}_{H},
\end{equation*}
due to \eqref{uniform_est1} we have 
\begin{equation}\label{error_est1}
	\lim_{\mu\to0} \E\sup_{r\in[0,t]}\abs{J_{\mu,1}(r)}_{H}^{2}=0,
\end{equation}
and since
\begin{equation*}
\abs{J_{\mu,2}(r)}_{H}^{2}\leq c\,t \mu^{2}\int_{0}^{t}\abs{v_{\mu}(s)}_{H}^{4}ds\leq c\,t \mu^{2}\int_{0}^{t}\abs{v_{\mu}(s)}_{H}^{4}\vert u_\mu(s)\vert_{H^1}^2\,ds,\ \ \ \ \ r \in\,[0,t],
\end{equation*}
due to  \eqref{uniform_est2} we have 
\begin{equation}\label{error_est2}
	\lim_{\mu\to0}\E\sup_{r\in[0,t]}\abs{J_{\mu,2}(r)}_{H}^{2}=0.
\end{equation}
Moreover, thanks to \eqref{uniform_est1}, \eqref{uniform_est2} and 
\begin{equation*}
\abs{J_{\mu,3}(r)}_{H}^{2}+\abs{J_{\mu,4}(r)}_{H}^{2}\leq c\,t\vert\varphi\vert_{\infty}^{2}\mu^{2}\int_{0}^{t}\abs{u_{\mu}(s)}_{H^{1}}^{2}\abs{v_{\mu}(s)}_{H^{1}}^{2}\abs{v_{\mu}(s)}_{H}^{2}ds,\ \ \ \ \ r \in\,[0,t],
\end{equation*}
we have
\begin{equation}\label{error_est3}
	\lim_{\mu\to0}\E\sup_{r\in[0,t]}\Big(\abs{J_{\mu,3}(r)}_{H}^{2}+\abs{J_{\mu,4}(r)}_{H}^{2}\Big)=0.
\end{equation}
From \eqref{uniform_est1}, \eqref{uniform_est2}, and 
\begin{equation*}
	\abs{J_{\mu,5}}_{H}^{2}\leq c\,t\vert\varphi\vert_{\infty}^{2} \mu^{2}\E\int_{0}^{t}\abs{u_{\mu}(s)}_{H^{1}}^{6}\abs{v_{\mu}(s)}_{H}^{4}ds,\ \ \ \ \ r \in\,[0,t],
\end{equation*}
it follows that 
\begin{equation}\label{error_est4}
\lim_{\mu\to 0}	\E\sup_{r\in[0,t]}\abs{J_{\mu,5}(r)}_{H}^{2}=0.
\end{equation}
Furthermore,  we have 
\begin{equation*}
	\begin{array}{ll}
		&\ds{ \E\sup_{r\in[0,t]}\abs{J_{\mu,6}(r)}_{H}^{2} \leq c\mu\E\int_{0}^{t}\norm{u_{\mu}(s)\times v_{\mu}(s)}_{\mathcal{T}_{2}(K,H)}^{2}ds\leq c\vert\varphi\vert_{\infty}\mu\E\int_{0}^{t}\abs{u_{\mu}(s)}_{H^{1}}^{2}\abs{v_{\mu}(s)}_{H}^{2}ds  },
	\end{array}
\end{equation*}
so that, thanks again to \eqref{uniform_est1}, 
\begin{equation}\label{error_est5}
	\lim_{\mu\to 0}	\E\sup_{r\in[0,t]}\abs{J_{\mu,6}(r)}_{H}^{2}=0.
\end{equation}
Finally, combining \eqref{error_est1}-\eqref{error_est5}, we complete our proof.
\end{proof}

\subsection{Tightness}

\begin{prop}\label{tightness}
	For every $(u_0,v_0)\in \H_{1}\cap\M$ and $T>0$, the family of probability measures $\big\{ \L(u_{\mu})\big\}_{\mu\in(0,1)}$ is tight in $C([0,T];H^{\delta})$, for every $\delta<2$.
\end{prop}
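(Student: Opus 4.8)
The plan is to derive tightness directly from the two uniform bounds already available—the pathwise energy identity \eqref{uniform_est1} and the higher-regularity estimate \eqref{uniform_est4-bis}—via a deterministic compactness criterion in $C([0,T];H^{\delta})$. The identity \eqref{sm1200} and the remainder estimate play no role here; they are needed only later for the identification of the limit. The whole argument rests on the interplay between a.s. time-regularity of $u_\mu$ valued in $H$ and a uniform spatial bound in $H^2$.

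First I would record two uniform estimates. Since $u_\mu(t)-u_\mu(s)=\int_s^t v_\mu(r)\,dr$, the Cauchy--Schwarz inequality together with the pathwise identity \eqref{uniform_est1} gives, for all $0\leq s\leq t\leq T$ and all $\mu\in(0,1)$,
\[
\abs{u_\mu(t)-u_\mu(s)}_{H}\leq |t-s|^{1/2}\Big(\int_s^t\abs{v_\mu(r)}_{H}^2\,dr\Big)^{1/2}\leq C_0\,|t-s|^{1/2},\qquad \P\text{-a.s.},
\]
where $C_0^2:=\tfrac 1{2\gamma}\sup_{\mu\in(0,1)}\big(\abs{u_0}_{H^1}^2+\mu\abs{v_0}_{H}^2\big)$ is a finite deterministic constant. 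For the spatial regularity, estimate \eqref{uniform_est4-bis} yields $\E\sup_{r\in[0,T]}\abs{u_\mu(r)}_{H^2}^2\leq c$, with $c$ independent of $\mu\in(0,1)$.

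Next I would introduce, for each $R>0$, the set
\[
\mathcal{K}_R:=\Big\{u\in C([0,T];H^\delta):\ \sup_{t\in[0,T]}\abs{u(t)}_{H^2}\leq R,\ \ \abs{u(t)-u(s)}_{H}\leq C_0|t-s|^{1/2}\ \forall\, s,t\in[0,T]\Big\},
\]
and show that it is relatively compact in $C([0,T];H^\delta)$. The interpolation inequality $\abs{w}_{H^\delta}\leq \abs{w}_{H}^{1-\delta/2}\abs{w}_{H^2}^{\delta/2}$ transfers the $H$-valued H\"older bound into
\[
\abs{u(t)-u(s)}_{H^\delta}\leq \big(C_0|t-s|^{1/2}\big)^{1-\delta/2}(2R)^{\delta/2}=c(R)\,|t-s|^{(2-\delta)/4},
\]
so that $\mathcal{K}_R$ is equicontinuous in $C([0,T];H^\delta)$, with a strictly positive H\"older exponent since $\delta<2$. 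Moreover, for each fixed $t$ the set $\{u(t):u\in\mathcal{K}_R\}$ is bounded in $H^2$, hence relatively compact in $H^\delta$ because $H^2$ is compactly embedded in $H^\delta$ on the bounded interval $(0,L)$. The Arzel\`a--Ascoli theorem for continuous $H^\delta$-valued functions then gives the relative compactness of $\mathcal{K}_R$.

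Finally I would conclude via Chebyshev. Since the increment bound holds almost surely with the deterministic constant $C_0$, every trajectory of $u_\mu$ satisfies the H\"older constraint defining $\mathcal{K}_R$, whence for every $R\geq 1$,
\[
\P\big(u_\mu\notin \mathcal{K}_R\big)\leq \P\Big(\sup_{t\in[0,T]}\abs{u_\mu(t)}_{H^2}>R\Big)\leq \frac{1}{R^2}\,\E\sup_{t\in[0,T]}\abs{u_\mu(t)}_{H^2}^2\leq \frac{c}{R^2},
\]
uniformly in $\mu\in(0,1)$. Given $\eta>0$, choosing $R$ with $c/R^2<\eta$ shows that $\{\L(u_\mu)\}_{\mu\in(0,1)}$ is tight. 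I expect the compactness of $\mathcal{K}_R$ to be the main point requiring care, as it is here that one must combine the interpolation inequality (to upgrade time-continuity from $H$ to $H^\delta$) with the compact Sobolev embedding (for the fibrewise relative compactness) before invoking Arzel\`a--Ascoli; the probabilistic part is then an immediate Chebyshev estimate because the time-regularity bound carries a deterministic constant and loses no probability.
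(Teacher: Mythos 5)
Your proof is correct, and it reaches the conclusion by a route that differs from the paper's in a meaningful way. The paper works with the single estimate \eqref{uniform_est4-bis}, applies Chebyshev to obtain, for each $\epsilon>0$, a ball $K_\epsilon$ in $C([0,T];H^{2})\cap W^{1,2}(0,T;H^{1})$ carrying probability at least $1-\epsilon$ uniformly in $\mu$, and then invokes the Aubin--Lions(--Simon) lemma to conclude that such a ball is compact in $C([0,T];H^{\delta})$ for $\delta<2$. You instead split the two ingredients: you extract the time-regularity from the pathwise energy identity \eqref{uniform_est1}, which gives an almost sure H\"older-$\tfrac12$ bound for the increments in $H$ with a \emph{deterministic} constant $C_0$, and you keep only the $H^{2}$ bound from \eqref{uniform_est4-bis} as the probabilistic input; the compactness of your set $\mathcal{K}_R$ is then established by hand via the interpolation inequality $\abs{w}_{H^\delta}\leq\abs{w}_{H}^{1-\delta/2}\abs{w}_{H^2}^{\delta/2}$, the compact embedding $H^{2}\hookrightarrow H^{\delta}$, and the vector-valued Arzel\`a--Ascoli theorem. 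What your approach buys is twofold: it is self-contained (no appeal to Aubin--Lions), and because the H\"older constraint holds on a set of full measure, Chebyshev is needed only for the $H^{2}$ bound, which slightly streamlines the probabilistic step; moreover you use only the weaker time-regularity $\partial_t u_\mu\in L^{2}(0,T;H)$ rather than the $L^{2}(0,T;H^{1})$ bound on the velocity. What the paper's formulation buys is brevity and the fact that the same function space $C([0,T];H^{2})\cap W^{1,2}(0,T;H^{1})$ is reused immediately afterwards in the Skorokhod/identification step. One minor point to make explicit if you write this up: pass from the relatively compact set $\mathcal{K}_R$ to its closure before quoting the definition of tightness, and note that the interpolation inequality you use is the standard moment inequality for the fractional powers of the Dirichlet Laplacian, which is the natural reading of $H^{\delta}$ for non-integer $\delta$ here.
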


\begin{proof}
	According to  \eqref{uniform_est4-bis}, we have that 
	\begin{equation}\label{uniform_est}
		\sup_{\mu\in(0,1)} \E\left(\sup_{t\in[0,T]}\abs{u_{\mu}(t)}_{H^{2}}^{2}+\int_{0}^{T}\abs{\partial_{t}u_{\mu}(s)}_{H^{1}}^{2}ds\right)<+\infty.
	\end{equation}
	This, in particular, implies that for every $\epsilon>0$ there exists $L_{\epsilon}>0$ such that if we denote by $K_{\epsilon}$ the ball of radius $L_{\epsilon}$ in $C([0,T];H^{2})\cap W^{1,2}(0,T;H^{1})$, then 
	\begin{equation*}
		\inf_{\mu\in(0,1)}\P(u_{\mu}\in K_{\epsilon})\geq 1-\epsilon.
	\end{equation*}
Due to the Aubin-Lions lemma, we know that the set $K_{\epsilon}$ is compact in $C([0,T];H^{\delta})$, for every $\delta<2$.
\end{proof}

\subsection{Proof of Theorem \ref{limite}}

\begin{proof}
Thanks to Proposition \ref{tightness} and \eqref{uniform_est4-bis}, we have the family $\big\{ \L(u_{\mu},\mu\,\partial_{t}u_{\mu})\big\}_{\mu\in(0,1)}$ is tight in $C([0,T];H^{\delta})\times C([0,T];H^{1})$, for any $\delta<2$. If for every $T>0$ and $\delta<2$ we define 
\begin{equation*}
	\Gamma_{T,\delta}:=C([0,T];H^{\delta})\times C([0,T];H^{1})\times C([0,T];E),
\end{equation*}
where $E$ is any Banach space such that the embedding $K\subset E$ is Hilbert-Schmidt, then as a consequence of  Skorokhod's theorem, for any sequence $\{\mu_{k}\}_{k\in\mathbb{N}}$, converging to zero, there exists a subsequence, still denoted by $\{\mu_{k}\}_{k\in\mathbb{N}}$, and some $\Gamma_{T,\delta}$-valued random variables 
\begin{equation*}
	\mathcal{Y}_{k}:=\big(\varrho_{k}, \mu_k\vartheta_{k}, \hat{w}_{k} \big),\ \ \ \ \mathcal{Y}:=\big(\varrho, \vartheta, \hat{w}\big),\ \ \ \ k\in\mathbb{N},
\end{equation*}
all defined on some probability space $\big(\hat{\Omega},\hat{\F},\{\hat{\F}_{t}\}_{t\in[0,T]},\hat{\P} \big)$, such that
\begin{equation}\label{law_coincide}
	\L(\mathcal{Y}_{k})=\L\big(u_{\mu_{k}},\mu_{k}\partial_{t}u_{\mu_{k}},w \big),\ \ \ \ k\in\mathbb{N},
\end{equation}
and 
\begin{equation}\label{convergence_as}
	\lim_{k\to\infty}\Big( \abs{\varrho_{k}-\varrho}_{C([0,T];H^{\delta})}+\mu_k\abs{\vartheta_{k}}_{C([0,T];H^{1})} + \abs{\hat{w}_{k}-\hat{w}}_{C([0,T];E)} \Big)=0,\ \ \ \ \ \hat{\mathbb{P}}-\text{a.s.}.
\end{equation}
In particular, due to \eqref{uniform_est1}, \eqref{uniform_est4-bis}, \eqref{uniform_est} and \eqref{law_coincide}, we have 
\begin{equation}\label{bound1}
	\sup_{k\in\mathbb{N}} \Bigg(\hat{\E}\sup_{t\in[0,T]}\Big(\abs{\varrho_{k}(t)}_{H^{2}}^{2}+\mu_{k}\abs{\vartheta_{k}(t)}_{H^{1}}^{2}\Big)+\hat{\E}\int_{0}^{T}\abs{\vartheta_{k}(s)}_{H^{1}}^{2}ds\Bigg)<+\infty,
\end{equation}
and there exists a deterministic $c>0$ such that
\begin{equation}\label{bound2}
	\sup_{k\in\mathbb{N}}\Big( \sup_{t\in[0,T]}\abs{\varrho_{k}(t)}_{H^{1}}+\int_0^T \vert \partial_t\varrho_k(s)\vert_H^2\,ds\Big)\leq c,\ \ \ \ \hat{\P}\text{-a.s.}
\end{equation}
Thanks to \eqref{convergence_as} and \eqref{bound2}, it follows that $\varrho\in L^{2}(\hat{\Omega};L^{\infty}(0,T;H^{\delta}))$, for every $\delta<2$, and 
\begin{equation}\label{bound3}
	\sup_{t\in[0,T]}\abs{\varrho(t)}_{H^{1}}\leq c, \ \ \ \ \hat{\P}\text{-a.s.}.
\end{equation}
Moreover, from \eqref{bound1} and \eqref{bound2} we get that $\varrho$ is weakly differentiable in time, with $\partial_t\varrho \in\,L^2(0,T;H)$ and
\begin{equation}
\int_0^T \vert \partial_t \varrho(s)\vert_H^2\,ds\leq c,\ \ \ \ \ \hat{\P}\text{-a.s.}\end{equation}
Finally, as a consequence of \eqref{convergence_as} and \eqref{bound1}, we have
\begin{equation}
\label{bound7}
\int_0^T \hat{\mathbb{E}	}\vert \varrho(s)\vert_{H^2}^2\,ds<\infty.
\end{equation}

Now, if we can show that $\varrho$ solves equation \eqref{limiting_equation}, then by the  uniqueness of solutions for equation \eqref{limiting_equation}, due to the classical argument by Gyongy and Krylov (see \cite{gk}), we can conclude that $u_{\mu_k}$ converges to $u$ in $C([0,T];H^{\delta})$, for every sequence $\mu_k\downarrow 0$, and the convergence is in probability.

\medskip

Due to \eqref{law_coincide} and identities \eqref{sm1200} and \eqref{sm123}, we have that for every $\psi \in\,C^\infty_0([0,L])$
\begin{equation*}
	\begin{array}{l}
		\ds{ \langle \gamma\varrho_{k}(t)+\frac{1}{2}\varphi\abs{\varrho_{k}(t)}^{2}\varrho_{k}(t)+\mu_k\vartheta_{k}(t),\psi\rangle_H=\langle\gamma u_{0}+\frac{1}{2}\varphi\abs{u_{0}}^{2}u_{0}+\mu_k v_{0},\psi\rangle_H+\int_{0}^{t}\langle\partial^2_x \varrho_{k}(s),\psi\rangle_Hds }\\
		\vs
		\ds{\quad \quad +\int_{0}^{t}\langle\abs{\varrho_{k}(s)}_{H^{1}}^{2}\varrho_{k}(s),\psi\rangle_Hds +\frac{3}{2\gamma}\int_{0}^{t}\langle \varphi\big(\partial^2_x\varrho_{k}(s)\cdot \varrho_{k}(s)\big)\varrho_{k}(s),\psi\rangle_Hds}\\
		\vs 
		\ds{ \quad \quad \quad \quad \quad \quad \quad+\frac{3}{2\gamma}\int_{0}^{t}\langle \varphi\abs{\varrho_{k}(s)}_{H^{1}}^{2}\abs{\varrho_{k}(s)}^{2}\varrho_{k}(s),\psi\rangle_Hds+\langle \hat{R}_{k}(t),\psi\rangle_H  },
	\end{array}
\end{equation*}
where 
\begin{equation*}
	\begin{array}{ll}
		&\ds{ \widehat{R}_{k}(t) = \frac{3\mu_k}{2\gamma}\varphi(u_0\cdot v_0)u_0-\frac{3\varphi\mu_k}{2\gamma}(\varrho_{k}(t)\cdot \vartheta_{k}(t))\varrho_{k}(t)-\mu_{k}\int_{0}^{t}\abs{\vartheta_{k}(s)}_{H}^{2}\varrho_{k}(s)ds  }\\
		\vs
		&\ds{\quad\quad\quad\quad +\frac{3\varphi \mu_k}{2\gamma}\int_{0}^{t}(\varrho_{k}(s)\cdot \vartheta_{k}(t))\vartheta_{k}(s)ds + \frac{3\varphi\mu_k}{2\gamma}\int_{0}^{t}\abs{\vartheta_{k}(s)}^{2}\varrho_{k}(s)ds  }\\
		\vs
		&\ds{\quad\quad\quad\quad - \frac{3\mu_k}{2\gamma}\int_{0}^{t}\abs{\vartheta_{k}(s)}_{H}^{2}\abs{\varrho_{k}(s)}^{2}\varrho_{k}(s)ds+\sqrt{\mu_{k}}\int_{0}^{t}(\varrho_{k}(s)\times \vartheta_{k}(s))d\hat{w}_{k}(s)       }.
	\end{array}
\end{equation*}
We have
\begin{equation*}
	\begin{array}{ll}
		&\ds{ \sup_{t\in[0,T]}\Big\lvert \varphi\Big(\abs{\varrho_{k}(t)}^{2}\varrho_{k}(t)-\abs{\varrho(t)}^{2}\varrho(t)\Big) \Big\rvert_{H}  }\\
		\vs
		&\ds{\quad \quad \leq c\vert\varphi\vert_{\infty}\sup_{t\in[0,T]}\Big(\big\lvert \big(\abs{\varrho_{k}(t)}^{2}-\abs{\varrho(t)}^{2}\big)\varrho_{k}(t)\big\rvert_{H} +\big\lvert \abs{\rho(t)}^{2}(\varrho_{k}(t)-\varrho(t))\big\rvert_{H}\Big)  }\\
		\vs
		&\ds{\quad \quad \leq c\vert\varphi\vert_{\infty}\sup_{t\in[0,T]}\Big(\big(\abs{\varrho_{k}(t)}_{H^{1}}^{2}+\abs{\varrho(t)}_{H^{1}}^{2}\big)\abs{\varrho_{k}(t)-\varrho(t)}_{H} \Big). }
	\end{array}
\end{equation*}
Then, in view of \eqref{convergence_as}, \eqref{bound2} and \eqref{bound3} we conclude
\begin{equation}\label{key1}
	\lim_{k\to\infty}\sup_{t \in\,[0,T]}\left|\langle \gamma\varrho_{k}(t)+\frac{1}{2}\varphi\abs{\varrho_{k}(t)}^{2}\varrho_{k}(t)+\mu_k \vartheta_{k}(t),\psi\rangle_H-\langle \gamma \varrho(t)+\frac{1}{2}\varphi\abs{\varrho(t)}^{2}\varrho(t),\psi\rangle_H\right|=0,\ \ \ \ \ \hat{\P}\text{-a.s.}
\end{equation}
Since
\begin{equation*}
	\sup_{t\in[0,T]}\Big\lvert \int_{0}^{t}\langle \partial^2_x(\varrho_{k}-\varrho)(s),\psi\rangle_H ds \Big\rvert\leq \vert \psi\vert_{H^1}\int_{0}^{T}\abs{\varrho_{k}(s)-\varrho(s)}_{H^{1}}ds,
\end{equation*}
due to \eqref{convergence_as} we have 
\begin{equation}\label{key2}
	\lim_{k\to\infty} \sup_{t \in\,[0,T]}\left|\int_{0}^{t}\langle \partial^2_x\varrho_{k}(s),\psi\rangle_H ds-\int_{0}^{t}\langle \partial^2_x\varrho(s),\psi\rangle_H\,ds\right|=0,\ \ \ \ \ \ \ \ \widehat{\P}\text{-a.s.},
\end{equation}
and since
\begin{equation*}
	\begin{array}{ll}
		&\ds{\sup_{t\in[0,T]}\Big\lvert \int_{0}^{t}\Big(\abs{\varrho_{k}(s)}_{H^{1}}^{2}\varrho_{k}(s)-\abs{\varrho(s)}_{H^{1}}^{2}\varrho(s)\Big)ds \Big\rvert_{H}  }\\
		\vs
		&\ds{\quad \quad \quad \quad \leq \int_{0}^{T}\big\lvert \abs{\varrho_{k}(s)}_{H^{1}}^{2}-\abs{\varrho(s)}_{H^{1}}^{2} \big\rvert ds +\int_{0}^{T}\abs{\varrho(s)}_{H^{1}}^{2}\abs{\varrho_{k}(s)-\varrho(s)}_{H}ds },
	\end{array}	
\end{equation*}
we get
\begin{equation}\label{key3}
	\lim_{k\to\infty}\sup_{t \in\,[0,T]}\left|\int_{0}^{t}\langle \abs{\varrho_{k}(s)}_{H^{1}}^{2}\varrho_{k}(s),\psi\rangle_Hds-\int_{0}^{t}\langle \abs{\varrho(s)}_{H^{1}}^{2}\varrho(s),\psi\rangle_H\,ds\right|=0,\ \ \ \ \ \ \ \ \hat{\P}\text{-a.s.}
\end{equation}
Moreover, for every $\eta \in\,C([0,T];H^1)\cap L^2(0,T;H^2)$ and $s \in\,[0,T]$ we have
\begin{equation*}
	\begin{array}{l}
	\ds{\langle \big(\partial^2_x\eta(s)\cdot \eta(s)\big)\eta(s)\varphi,\psi\rangle_H=-\langle\big(\partial_x\eta(s),\eta(s)\big)\eta(s)\varphi,\psi^\prime\rangle_H}\\
	\vs 
	\ds{\quad \quad \quad \quad -\langle \big(\partial_x\eta(s),\eta(s)\big)\big(\partial_x\eta(s)\varphi+\eta(s)\varphi^\prime\big)+\vert\partial_x\eta(s)\vert^2\eta(s)\varphi,\psi\rangle_H.}	
	\end{array}
\end{equation*}
This implies that
\begin{equation*}
	\begin{array}{l}
		\ds{\int_{0}^{t}\langle \big(\partial^2_x\varrho_{k}(s)\cdot \varrho_{k}(s)\big)\varrho_{k}(s)\varphi-\big(\partial^2_x\varrho(s)\cdot \varrho(s)\big)\varrho(s)\varphi,\psi\rangle_H\,ds}\\
		\vs
	\ds{=-\int_0^t\langle\big[\big(\partial_x\varrho_k(s),\varrho_k(s)\big)\varrho_k(s)-\big(\partial_x\varrho(s),\varrho(s)\big)\varrho(s)\big]\varphi,\psi^\prime\rangle_H\,ds}\\
	\vs 
	\ds{\quad \quad-\int_0^t\langle\big[\big(\partial_x\varrho_k(s),\varrho_k(s)\big)\big(\partial_x\varrho_k(s)\varphi+\varrho_k(s)\varphi^\prime\big)-\big(\partial_x\varrho(s),\varrho(s)\big)\big(\partial_x\varrho(s)\varphi+\varrho(s)\varphi^\prime\big)\big],\psi\rangle_H\,ds	}\\
	\vs 
	\ds{\quad \quad\quad \quad \quad -\int_0^t\langle\big[\vert\partial_x\varrho_k(s)\vert^2\varrho_k(s)-\vert\partial_x\varrho(s)\vert^2\varrho(s)\big]\varphi,\psi\rangle_H\,ds	=:\sum_{i=1}^3 J_{i,k}(t).}
	\end{array}
	\end{equation*}
We have
\begin{equation*}\begin{array}{l}
\ds{\vert J_{1,k}(t)\vert\leq c\,\vert\varphi\vert_\infty\vert \psi^\prime\vert_\infty\int_0^T	\vert\varrho_k(s)-\varrho(s)\vert_{H^1}\vert\varrho_k(s)\vert^2_{H^1}\,ds}\\
\vs 
\ds{\quad \quad \quad \quad +c\,\vert\varphi\vert_\infty\vert \psi^\prime\vert_\infty\int_0^T\vert\varrho_k(s)-\varrho(s)\vert_{H^1}\vert \varrho(s)\vert_{H^1}\left(\vert \varrho_k(s)\vert_{H^1}+\vert \varrho(s)\vert_{H^1}\right)\,ds,}	
\end{array}
\end{equation*}
and, thanks to \eqref{convergence_as}, \eqref{bound2} and \eqref{bound3}, we conclude that 
\begin{equation}
\label{sm-fine1}
\lim_{k\to\infty}	\sup_{t \in\,[0,T]}\,|J_{1,k}(t)|=0,\ \ \ \ \ \ \hat{\mathbb{P}}-\text{a.s.}.
\end{equation}	
Next, we have
\begin{equation*}\begin{array}{l}
\ds{\vert J_{2,k}(t)\vert\leq c\,\vert \psi\vert_\infty\int_0^T\vert\varrho_k(s)-\varrho(s)\vert_{H^1}\left(\vert\varrho_k(s)\vert_{H^1}+\vert\varrho(s)\vert_{H^1}\right)\left(\vert\varrho_k(s)\vert_{H^2}\vert\varphi\vert_\infty+\vert\varrho_k(s)\vert_{H^1}\vert\varphi^\prime\vert_\infty\right)\,ds}\\
\vs 
\ds{\quad \quad \quad \quad \quad \quad \quad \quad +c\,\vert \psi\vert_\infty\left(\vert\varphi\vert_\infty+\vert \varphi^\prime\vert_\infty\right)\int_0^T\vert\varrho_k(s)-\varrho(s)\vert_{H^1}\vert\varrho(s)\vert_{H^2}\vert\varrho(s)\vert_{H^1}\,ds.}
\end{array}
\end{equation*}		
Thus, as a consequence of \eqref{convergence_as} and bounds \eqref{bound1}, 	\eqref{bound2}, \eqref{bound3} and \eqref{bound7}, we can conclude that
\begin{equation}
\label{sm-fine2}
\lim_{k\to\infty}\hat{\mathbb{E}	}\sup_{t \in\,[0,T]}\vert J_{2,k}(t)\vert=0.
\end{equation}
Finally, we have	
\begin{equation*}
\begin{array}{l}
\ds{\vert J_{3,k}(t)\vert \leq c\,\vert\varphi\vert_\infty\vert \psi\vert_\infty\int_0^T\vert\varrho_k(s)-\varrho(s)\vert_{H^1}\left[\vert\varrho_k(s)\vert_{H^1}\left(\vert\varrho_k(s)\vert_{H^2}+\vert \varrho(s)\vert_{H^2}\right)+\vert \varrho(s)\vert_{H^1}^2\right]\,ds,}
\end{array}
	\end{equation*}
	and, due again to \eqref{convergence_as}, \eqref{bound1} and\eqref{bound7}, we conclude
	\begin{equation}
	\label{sm-fine3}	
	\lim_{k\to\infty}\hat{\mathbb{E}	}\sup_{t \in\,[0,T]}\vert J_{3,k}(t)\vert=0.
\end{equation}

Therefore, as a consequence of \eqref{sm-fine1}, \eqref{sm-fine2} and \eqref{sm-fine3}, we conclude that
\begin{equation}
\label{sm-fine4}
\lim_{k\to\infty}	\hat{\mathbb{E}}\sup_{t \in\,[0,T]}\left|\int_{0}^{t}\langle \big(\partial^2_x\varrho_{k}(s)\cdot \varrho_{k}(s)\big)\varrho_{k}(s)\varphi-\big(\partial^2_x\varrho(s)\cdot \varrho(s)\big)\varrho(s)\varphi,\psi\rangle_H\,ds\right|=0.
\end{equation}

Next, for every $t \in\,[0,T]$ we have
\begin{equation*}
	\begin{array}{l}
	\ds{ \Big\lvert \varphi \int_{0}^{t}\Big(\abs{\varrho_{k}(s)}_{H^{1}}^{2}\abs{\varrho_{k}(s)}^{2}\varrho_{k}(s)-\abs{\varrho(s)}_{H^{1}}^{2}\abs{\varrho(s)}^{2}\varrho(s) \Big)ds \Big\rvert_{H} }\\
	\vs
	\ds{\leq c\vert\varphi\vert_{\infty}\Bigg( \int_{0}^{T}\big\lvert\abs{\varrho_{k}(s)}_{H^{1}}^{2}-\abs{\varrho(s)}_{H^{1}}^{2}\big\rvert\cdot \big\lvert \abs{\varrho_{k}(s)}^{2}\varrho_{k}(s)\big\rvert_{H}ds }\\
	\vs
	\ds{\quad\quad+\int_{0}^{T}\abs{\varrho(s)}_{H^{1}}^{2}\big\lvert (\abs{\varrho_{k}(s)}^{2}-\abs{\varrho(s)}^{2})\varrho_{k}(s)\big\rvert_{H}ds  +\int_{0}^{T}\abs{\varrho(s)}_{H^{1}}^{2}\big\lvert \abs{\varrho(s)}^{2}(\varrho_{k}(s)-\varrho(s))\big\rvert_{H}ds   \Bigg) }\\
	\vs
	\ds{\leq c\vert\varphi\vert_{\infty}\Big( \int_{0}^{T}\abs{\varrho_{k}(s)}_{H^{1}}^{2}\big(\abs{\varrho_{k}(s)}_{H^{1}}+\abs{\varrho(s)}_{H^{1}}\big)\abs{\varrho_{k}(s)-\varrho(s)}_{H^{1}}ds }\\
	\vs 
	\ds{\quad\quad\quad \quad\quad\quad\quad\quad\quad+\int_{0}^{T}\big(\abs{\varrho_{k}(s)}_{H^{1}}^{4}+\abs{\varrho(s)}_{H^{1}}^{4}\big)\abs{\varrho_{k}(s)-\varrho(s)}_{H}ds  \Big).  }
	\end{array}
\end{equation*}
Thus, thanks again to \eqref{bound2} and \eqref{bound3}, we get
\begin{equation}\label{key5}
	\lim_{k\to\infty} \sup_{t \in\,[0,T]}\left|\int_{0}^{t}\langle \varphi\abs{\varrho_{k}(s)}_{H^{1}}^{2}\abs{\varrho_{k}(s)}^{2}\varrho_{k}(s),\psi\rangle_H\,ds- \int_{0}^{t}\langle \varphi\abs{\varrho(s)}_{H^{1}}^{2}\abs{\varrho(s)}^{2}\varrho(s),\psi\rangle_H\,ds\right|=0,\ \ \ \ \hat{\P}\text{-a.s.}
\end{equation}
By using the same arguments as in the proof of Lemma \ref{reminder}, we conclude that
\begin{equation}\label{key6}
	\lim_{k\to\infty}\hat{\mathbb{E}}\sup_{t \in\,[0,T]}\left|\langle\widehat{R}_{k}(t),\psi\rangle_H\right|^2 = 0.
\end{equation}
Finally, combining  \eqref{key1}, \eqref{key2}, \eqref{key3}, \eqref{sm-fine4} and \eqref{key5} together with \eqref{key6}, we can conclude that $\varrho$ satisfies equation \eqref{limiting_equation}. As we have seen above, this allows to conclude the proof of Theorem \ref{limite}.

\end{proof}

\end{document}